\newtheorem{thm}{Theorem}[section]
\newtheorem{remark}[thm]{Remark}
\newtheorem{prop}[thm]{Proposition}
\newtheorem{lem}[thm]{Lemma}
\newtheorem{cor}[thm]{Corollary}
\newtheorem{convention}[thm]{Convention}
\theoremstyle{definition}
\newtheorem{defn}[thm]{Definition}
\newtheorem{ex}[thm]{Example}
\theoremstyle{remark}
\newcommand{\Rr}{\mathbb R}
\newcommand{\Zz}{\mathbb Z}
\newcommand{\Nn}{\mathbb N}
\newcommand{\Cc}{\mathbb C}
\newcommand{\Kk}{\mathbb K}
\newcommand{\set}[1]{\left\{#1\right\}}
\newcommand{\eval}[1]{\left\langle#1\right\rangle}
\newcommand{\brr}[1]{\left[#1\right]}
\newcommand{\F}{\ensuremath{\mathcal{F}}}
\newcommand{\ds}{\displaystyle}
\newcommand{\E}{\ensuremath{\mathcal{E}}}
\renewcommand{\d}{\mathrm d}               % differential
\newcommand{\smalcirc}{\mbox{\,\tiny{$\circ $}\,}}  %small composition circle
\renewcommand{\top}{\text{\rm top}}     % Top
\newcommand{\al}{\alpha}
\DeclareMathOperator{\ad}{ad}           % adjoint
\DeclareMathOperator{\End}{End}         % Endomorphisms
\DeclareMathOperator{\Der}{Der}         % Derivations
\DeclareMathOperator{\str}{Str}
\begin{document}

\title{Modular class of Lie $ \infty$-algebroids and adjoint representations}

\author{Raquel Caseiro}
\address{University of Coimbra\\ CMUC\\ Department of Mathematics\\
Apartado 3008\\
EC Santa Cruz\\
3001-501 Coimbra\\ Portugal}
\email{raquel@mat.uc.pt}

\author{Camille Laurent-Gengoux}
\address{Institut \'Elie Cartan de Lorraine \\ 
	UMR 7502 du CNRS \\ 
	Université de Lorraine \\
	Metz, France }
\email{camille.laurent-gengoux@univ-lorraine.fr}

\thanks{Raquel Caseiro is partially supported by the Centre for Mathematics of the University of Coimbra - UIDB/00324/2020, funded by the Portuguese Government through FCT/MCTES.
	Camille Laurent-Gengoux thanks the "Granum" project MITI 80 primes CNRS for its financial support while this project was completed.   
}

%\thanks{RC is partially supported by .... .}

%\date{July, 2015}

\begin{abstract}
We study the modular class of $Q$-manifolds, and in particular of negatively graded Lie $\infty$-algebroid. We show the equivalence of several descriptions of those classes, that it matches the  classes introduced by various authors and that the notion is homotopy invariant. In the process, the adjoint and coadjoint actions up to homotopy of a Lie $\infty$-algebroid are spelled out. We also wrote down explicitly some dualities, e.g. between representations up to homotopies of Lie $\infty$-algebroids and their $Q$-manifold equivalent, which we hope to be of use for future reference. 
\end{abstract}

%%% ----------------------------------------------------------------------
\maketitle
%%% ----------------------------------------------------------------------

%%%%%%%%%%%%%%%%%%%%%%%%%%%%%%%%%%%%
%%%%%%%%%%%%%%%%%%%%%%%%%%%%%%%%%%%%
%%%%%%%%%%%%%%%%%%%%%%%%%%%%%%%%%%%%
\section*{Introduction}             %
\label{sec:introduction}           %
%%%%%%%%%%%%%%%%%%%%%%%%%%%%%%%%%%%%
%%%%%%%%%%%%%%%%%%%%%%%%%%%%%%%%%%%%
%%%%%%%%%%%%%%%%%%%%%%%%%%%%%%%%%%%%

The modular class can be broadly defined as the obstruction to the existence of an invariant volume form. For Lie algebras, it is a class in the Chevalley-Eilenberg cohomology obstructing the existence of a bi-invariant volume form on the corresponding Lie group. For a regular foliation, it is the obstruction to the existence of a transverse volume form invariant under all monodromies; It is valued in tangent cohomology. For Poisson structures, it is the obstruction to the existence of a volume form preserved under all Hamiltonian vector fields, and is valued in Poisson cohomology \cite{Koszul}. Now, Lie algebras, regular foliations, and Poisson structures are Lie algebroids, an object at the heart of Professor Kirill Mackenzie's work: he wrote two books which are the main references on the matter \cite{Kirill05,Kirill85}, and made numerous crucial contributions to their studies.  It is therefore not surprising that the various and disparate occurrences of modular classes have long been unified as particular cases of modular classes of Lie algebroids \cite{Weinstein97}-\cite{ELW}. To be more precise, it has been rightly suggested by Yvette Kosmann-Schwarzbach and Alan Weinstein that modular classes should be associated not to a Lie algebroid, but to a Lie algebroid morphism: the modular class of a Lie algebroid being the one of its anchor map \cite{YKSAL}-\cite{KLW08}- \cite{CF}-\cite{Caseiro}.
Notice that the aforementioned articles make a crucial use of Kirill Mackenzie's explicit description of Lie algebroid morphisms detailed in Part I, chapter 4 of \cite{Kirill05}, first developed by himself in \cite{Kirill87}.

For finite dimensional Lie algebroids, there is dual point of view, attributed to Vaintrob \cite{Vaintrob}, but used also by Kirill MacKenzie and Ping Xu in their studies of Lie bialgebroids \cite{MX}, which consists in seeing the Lie algebroid as a graded manifold equipped with a degree $+1$ vector field squaring to zero.
 As explained by Voronov \cite{V12} in \emph{Q-manifolds and Mackenzie theory: an overview},  $Q$-manifolds are an efficient manner to deal with Lie bialgebroids.
%The same duality exists for finite dimensional Lie $ \infty$-algebroids \cite{V05}-\cite{V10}. 
When they are of finite rank, $Q$-manifolds, i.e. graded manifolds concentrated in negative degrees equipped with a degree $ +1$ self-commuting vector field, can be seen as being the dual of Lie $\infty $-algebroids.
%Lie algebroid are now considered to be the simplest example of Lie $\infty $-algebroids which are the dual of these $Q$-manifolds.
They are a natural tool in gauge theory \cite{Kotov,Salnikov} and higher structures \cite{CamposCalaque}.  Of course, it contains (when the base manifold is a point) Lie $\infty $-algebras, whose role is well-known in various deformation theories, in particular deformation quantizations \cite{Kontsevich} or deformations of Poisson manifolds \cite{Cattaneo}, as well as complex geometry \cite{Kapranov} and, more generally, the Atiyah class of a Lie algebroid pair \cite{ChenXu,LSX}. 
In most cases, the only properties of Lie $\infty $-algebroids   of interest are those which are preserved under an equivalence relation called homotopy equivalence (See \cite{Campos} for natural interpretations): this aspect is not seen for Lie algebroids, for which  homotopy equivalence reduces to Lie algebroid isomorphisms.

For instance, an unique up to homotopy "universal Lie $\infty $-algebroid" has also been associated  to singular foliations in \cite{Fregier,Lavau,LLS} and Lie-Rinehart algebras \cite{Louis}.

In the present article, we describe the modular class of negatively graded Lie $\infty $-algebroids. Those are used by Sylvain Lavau \cite{Lavau22} to define modular class of a singular foliation as being the one of (any one of) its universal Lie $\infty$-algebroid.

The modular class of a Lie $\infty$-algebroid has already been considered (in $\Zz_2$-graded setting) by several authors: A. J. Bruce \cite{AB} and T. Voronov \cite{V07} (notice that the modular class appears in the arXiv version of \cite{V07}  -page 6- but not in the printed version \cite{V12}).  In both works, it is the obstruction to the existence of an Berezinian form invariant with respect to the homological vector field $Q$.  Also and independently, Gran\aa ker \cite{G08} describes (via operads)  unimodular Lie $\infty$-algebras and proves that the notion is  homotopy invariant. 
However, as far as we know, the homotopy invariance of modular  class for  Lie $\infty$-algebroids was never considered,
 %But in the present note, we intend to give a systematic description of the modular class and some of its geometric properties.
 %having in mind the dual views on Lie $\infty $-algebra and Lie $\infty $-algebroids. 
 nor was this class related with representations up to homotopy, more precisely with the adjoint action up to homotopy. We also give some of its geometric properties when restricted to a leaf.
 Since the final goal shall be the study of the modular class of singular foliations \cite{Lavau}, we intend to insist more on the modular class of a Lie $\infty $-algebroid and on its homotopy invariance, but we start with the Lie $\infty $-algebra case.
 
% Having in mind the dual views on Lie $\infty $-algebra and Lie $\infty $-algebroids, we prove the equivalence of the definition of the modular class of $Q$-manifolds as defined in \cite{AB,V07} with  the modular class as we define it through  the adjoint action of its dual Lie $\infty$-algebroid.
 %We also interpret those as being the obstruction to the existence of some kind of volume form - indeed, a Berezinian form, that we describe in detail. A key feature of this class is its invariance under homotopy equivalence, as expected. 
% Also, we show, in the continuation of \cite{KLW08}, that the modular class is functorially associated with morphisms, and does not depend on the homotopy class of such a morphism.
 
We start in Sections \ref{LieInfty1} and \ref{LieInftyModular2}   with a detailed study of the modular forms and class of a Lie $\infty $-algebra (mainly those of finite dimension, although this condition may be relaxed). We show that it can be defined either as the supertrace of the adjoint action, or as the divergence of the vector field that dualizes the Lie $ \infty$-brackets. We also show that it is well-behaved under Lie $\infty$-morphisms, and their homotopies.  

In Sections \ref{LieInfty} and \ref{LieInftyModular}, we enlarge this construction to negatively graded $Q$-manifolds.  Again, we show that it can be defined either as the divergence of the $Q$-vector field, but also as the super-trace of the adjoint action. 
This requires a precise description of adjoint and coadjoint actions for $Q$-manifolds that extend the Abad-Crainic adjoint representations of to homotopy \cite{GSM,AC,Raj} for Lie algebroids. We then show invariance under homotopy equivalence. Various examples are then given, and the geometric meaning is detailed.

\tableofcontents

\section{Lie ${{\infty}}$-algebras}
\label{LieInfty1}

We begin by reviewing some concepts about graded vector spaces and Lie $\infty$-algebras \cite{LS}. Different authors use different conventions: our conventions match those in \cite{Azimi,Campos,LLS,LS,Ryvkin}.

\subsection{Conventions on graded vector spaces}

We will work  with $\Zz$-graded vector spaces with finite dimension over a field $\Kk=\Rr$ or $\Cc$.

Let $E=\oplus_{i\in\Zz}E_i$ be a {\textbf{finite dimensional graded vector space}} (i.e. all vector spaces $E_i$ are of finite dimension, and this dimension is zero except for finitely many of them). We call $E_i$ the homogeneous component of $E$ of degree $i$. An  element $x$ of $E_i$ is said to be homogeneous with degree $|x|=i$.
For each $k\in\Zz$, one may shift all the degrees by $k$ and obtain a new grading on $E$. This new graded vector space is denoted by $E[k]$ and is defined by $E[k]_i=E_{i+k}$.
%Let $x$ be an  element of $E$. We denote by  $x_{[k]}$ the corresponding element of $E[k]$. This way, if $|x|=i$ then  $|x_{[k]}|=i-k$.

A morphism $\Phi:E\to V$ between two graded vector spaces is a degree preserving  linear map, i.e. a collection of linear maps $\Phi_i:E_i\to V_i$, $i\in\Zz$. We call $\Phi:E\to V$ a morphism of degree $k$, for some $k\in\Zz$, if it is a morphism between $E$ and $V[k]$. 

The dual $E^*$ of $E$ is naturally a finite dimensional graded vector space whose component of degree $i$ is the dual $(E_{-i})^* $ of $E_{-i}$, for all $ i \in {\mathbb Z}$, in equations: $(E^*)_i = (E_{-i})^*$.

Given two finite dimensional graded vector spaces $E$ and $V$, their direct sum $E\oplus V$ [resp. tensor product $E\otimes V$] is a finite dimensional graded vector space with grading
$$
(E\oplus V)_i= E_i\oplus V_i \hspace{.5cm}
\hbox{ [resp. $(E\otimes V)_i= \oplus_{j+k=i}E_j\otimes V_k$ ]}.
$$
%and their tensor product also comes equipped with a finite dimensional graded vector space with grading $$(E\otimes V)_i= \oplus_{j+k=i}E_j\otimes V_k.$$

We  adopt the Koszul sign convention: for homogeneous morphisms $f:E\to V$ and $g:F\to W$, the tensor product $f\otimes g:E\otimes F\to V\otimes W$ is the morphism of degree $|f|+|g|$ given by
\begin{equation*}
(f\otimes g)(x\otimes y)=(-1)^{|x||g|} f(x)\otimes g(y),\quad x\in E, y\in F.
\end{equation*}

For each $k\in\Nn_0$, let $T^k(E)=\otimes^k E$ and let $T(E)=\oplus_{k} T^k(E)$ be  the tensor algebra over $E$. The {\textbf{graded symmetric algebra over $E$}} is the quotient
$$
S(E)=T(E)/\eval{x\otimes y- (-1)^{|x||y|}y\otimes x}.
$$
This quotient is a graded commutative algebra, whose product we denote by $ \odot $.
%and the graded skew-symmetric algebra over $E$ is given by
%$$
%\wedge(E)=T(E)/\eval{x\otimes y + (-1)^{|x||y|}y\otimes x}.
%$$
%Any shifted graded vector space $E[k]$ may be seen as a the tensor product of $\Kk[k]\otimes E$, where $\Kk[k]$ is $\Kk$ concentrated in degree $k$.
%\begin{defn}
%Let $E$ be a graded vector space, the \textbf{d\'ecalage isomorphism} is the collection of  maps $\Phi_n:(\wedge^n E)[n]\to S^n(E[1])$, $n\in \Nn$, given by
%$$
%\Phi_n(x_1\wedge\ldots\wedge x_n)=(-1)^{\sum_{i=1}^n(n-i)|x_i|}x_1. \ldots . x_n,
%$$
%for any homogeneous elements $x_1,\ldots, x_n\in E$.
%\end{defn}
%\begin{defn}
%Let $E$ be a graded vector space. Elements of $S^k(E^*)\otimes E$ are called %\textbf{symmetric vector valued $k$-forms} of $E$.
%\end{defn}
%Since $E$ is finite dimensional, for each $k\in \Nn_0$, the dual of $S^k()$ is isomorphic to $S^k(E^*)$ and there is a one to one correspondence between
%linear maps from $S^k(E)$ to $E$ and elements of $S^k(E^*)\otimes E$.

For  $n\geq 1$, let $S_n$ be the permutation group of order $n$. For any $n$-tuple of homogeneous elements $x=(x_1,\ldots,x_n)$ in $E$ and any $\sigma \in S_n$, the \textbf{Koszul sign} is the element in $\epsilon(\sigma,x) \in  \{-1,1\}$ defined by
$$
x_{\sigma(1)} \odot \ldots  \odot x_{\sigma(n)}=\epsilon(\sigma,x) \, x_1 \odot \ldots \odot x_n.
$$
For the sake of simplicity, we will simply denote the Koszul sign by $\epsilon(\sigma) $ instead of $ \epsilon(\sigma,x)$.

An element $\sigma$ of $S_{n}$ is called an $(i,n-i)$-unshuffle if  $\sigma(1)<\ldots< \sigma(i)$ and $\sigma(i+1)<\ldots < \sigma(n)$.
The set of $(i,n-i)$-unshuffles is denoted by $Sh(i,n-i)$.

Since we consider $E$ a  finite dimensional graded vector space, we identify $S(E^*)$ with $(SE)^*$. %Koszul sign conventions yield, for each homogeneous elements  $f,g\in E^*$,
%\begin{equation*}
%(f\odot g)(x\odot y)=(-1)^{|x||g|}f(x) g(y) + f(y) g(x), \quad x,y\in E.
%\end{equation*}

Koszul sign conventions and degree reasons yield, for each homogeneous elements  $f,g\in E^*$,
\begin{eqnarray*}
(f\odot g)(x\odot y)&=& (f\otimes g)(x\odot y+(-1)^{|x||y|}y\odot x)\\
&=&
(-1)^{|x||g|}f(x) g(y) + (-1)^{|x||y|+|y||g|} f(y) g(x),\\
 &=&
(-1)^{|x||g|}f(x) g(y) + f(y) g(x)\quad x,y\in E.
\end{eqnarray*}

%This extends to the duality between $S^k(E^*)$ with $(S^kE)^*$ for all $k \in \mathbb N$.

\subsection{Lie $\infty$-algebras}

%The usual definition of Lie $\infty$-algebra is given in \cite{LS,CCZeleve} and lies in the skew-symmetric tensor algebra, as in \cite{LS,CCZeleve} .
%%\begin{comment}% ORIGINAL DEFINITION OF LIE \INFTY ALGEBRAS
%\begin{defn}
%A \textbf{Lie $\infty$-algebra} is a graded vector space $E=\oplus_{i\in\Zz} E_i$ together with a family of degree $2-k$ linear maps $l_k: \wedge^kE\to E$, $k\geq 1$, satisfying
%\begin{equation*}
%\sum_{i+j=n+1}\sum_{\sigma\in Sh(i,j-1)}(-1)^{i(j-1)}sgn(\sigma)\epsilon(\sigma)l_j\left(l_i\left(x_{\sigma(1)},\ldots, x_{\sigma(i)}\right),x_{\sigma(i+1)}\ldots, x_{\sigma(n)}\right)=0,
%\end{equation*}
%for all $n\in\Nn$ and all homogeneous elements $x_1,\ldots,x_n\in E$.
%\end{defn}
%%\end{comment}
We will consider the symmetric approach to Lie $\infty$-algebras, as in  \cite{LS,Ryvkin}.

\begin{defn}
A \textbf{symmetric Lie $\infty$-algebra} or a \textbf{Lie$[1]$ $\infty$-algebra} is a graded vector space $E=\oplus_{i\in\Zz} E_i$ together with a family of degree $1$ linear maps $l_k: S^k(E)\to E$, $k\geq 1$, satisfying
\begin{equation}\label{eq:def:symm:L:infty:algebra}
\sum_{i+j=n+1}\sum_{\sigma\in Sh(i,j-1)}\epsilon(\sigma) \, l_j\left(l_i\left(x_{\sigma(1)},\ldots, x_{\sigma(i)}\right),x_{\sigma(i+1)}\ldots, x_{\sigma(n)}\right)=0,
\end{equation}
for all $n\in\Nn$ and all homogeneous elements $x_1,\ldots,x_n\in E$.
\end{defn}

The d\'ecalage isomorphism (see, e.g. \cite{Azimi}) establishes a one to one correspondence between  skew-symmetric Lie $\infty$-algebra structures over $E$ and symmetric Lie $\infty$-algebra structures over $E[1]$.
%We will work with the symmetric approach to  Lie $\infty$-algebra.

\begin{ex}[Symmetric graded Lie algebra]
A symmetric graded Lie algebra is a symmetric Lie $\infty$-algebra $E=\oplus_{i\in\Zz}E_i$ such that $l_n = 0$ for $n \neq 2$. Then the degree $0$ bilinear map on $E[1]$ defined by:
 \begin{equation}
 \label{eq:decalage}
  [x,y] := (-1)^{j} l_2(x,y) \hbox{ for all $x \in E_i,y\in E_j $}
  \end{equation}
is a graded Lie bracket.
In particular, if $E=E_{-1}$ is concentrated in degree $-1$, we get a Lie algebra structure.
\end{ex}

\begin{ex}[Symmetric DGL algebra]
A symmetric differential graded Lie algebra (DGLA) is a symmetric  Lie $\infty$-algebra $E=\oplus_{i\in\Zz}E_i$  such that $l_n=0$ for $n \neq 1$ and $n \neq 2$.
% $l_2=\brr{\, -\, , \, -\, }$.
Then $d:=l_ 1$ is a degree $+1$ linear map $d:E\to E$  squaring zero and satisfies the following compatibility condition with the bracket $ \set{\cdot,\cdot}= l_2(\cdot,\cdot)$:
\begin{equation}
\label{eq:gradedLie}
\left\{\begin{array}{l}
d\set{x,y} + \set{d(x),y} + (-1)^{|x|}\set{x, d(y)}=0,\\
\set{\set{x,y},z} + (-1)^{|y||z|}\set{\set{x,z},y} + (-1)^{|x|}\set{x,\set{y,z}}=0.
\end{array}\right.
\end{equation}
\end{ex}

\begin{ex}\label{ex:DGLA:End:E}
Let $(E=\oplus_{i\in\Zz}E_i, \d)$ be a cochain complex. Then $\End (E)[1]=(\oplus_{i\in\Zz} \End_i E)[1]$ has a natural symmetric DGL algebra  structure with $l_1=\partial, \;\;l_2=\set{\, , \,}$ given by:
\begin{equation*}
\left\{\begin{array}{l}
 \partial\phi%=\tilde l_1\phi
=-\d \smalcirc \phi + (-1)^{|\phi|+1} \phi\smalcirc \d,\\
 \set{\phi,\psi}%=\tilde l_2(\phi,\psi)
=(-1)^{|\phi|+1}\left(\phi\smalcirc \psi - (-1)^{(|\phi|+1)(|\psi|+1)} \psi\smalcirc\phi \right),
\end{array}\right.
\end{equation*}
for $\phi, \psi$ homogeneous elements of $\End(E)[1]$.
\end{ex}

\noindent
Recall that for $(E, \set{l_k}_{k\geq 1})$, a symmetric Lie $\infty$-algebra, equations (\ref{eq:def:symm:L:infty:algebra}) establish:
\begin{itemize}
	\item[(i)]  for $n=1$, that
	$l_1\smalcirc l_1=0$, so that $l_1:E_\bullet \to E_{\bullet +1}$ is a differential on $E$ and we have an associated cohomology  $H^\bullet(E,l_1)$;
	\item[(ii)]  for $n=2$, that
	$$l_1(l_2(x_1,x_2))+ l_2(l_1(x_1),x_2) + (-1)^{|x_1|}l_2(x_1,l_1(x_2))=0,$$
	so that the bracket $l_2$ induces a graded symmetric bracket on $H^\bullet(E,l_1)$;
	\item[(iii)] for $n=3$, that the previously defined symmetric bracket on  $H^\bullet(E,l_1)$ satisfies the graded symmetric Jacobi identities as in equations (\ref{eq:gradedLie}).
\end{itemize}

Hence:

\begin{prop}\label{prop:cohom}Let $(E, \set{l_k}_{k\geq 1})$ be a symmetric Lie $\infty$-algebra.
The graded vector space  $H^\bullet(E,l_1)$ has a natural graded symmetric Lie algebra structure.
\end{prop}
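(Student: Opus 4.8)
The plan is to make explicit the three observations (i)--(iii) recorded just above the statement. First I would define the bracket on cohomology: for $l_1$-cocycles $x,y\in E$ I set $l_2(\overline x,\overline y):=\overline{l_2(x,y)}$, where $\overline z$ denotes the $l_1$-cohomology class of a cocycle $z$. The $n=2$ instance of (\ref{eq:def:symm:L:infty:algebra}) reads $l_1(l_2(x,y))=-l_2(l_1x,y)-(-1)^{|x|}l_2(x,l_1y)$, which vanishes when $x,y$ are cocycles, so that $l_2(x,y)$ is again a cocycle; and if $x=l_1a$ is a coboundary while $y$ is a cocycle, applying that same identity to the pair $(a,y)$ yields $l_2(l_1a,y)=-l_1(l_2(a,y))$, a coboundary. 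The analogous statement in the second argument follows from graded symmetry of $l_2:S^2E\to E$. Hence $l_2(\overline x,\overline y)$ depends only on the classes $\overline x,\overline y$, and it defines a degree $+1$, graded symmetric bilinear operation on $H^\bullet(E,l_1)$.

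Next I would extract the Jacobi identity from the $n=3$ instance of (\ref{eq:def:symm:L:infty:algebra}). Only the pairs $(i,j)\in\{(1,3),(2,2),(3,1)\}$ contribute, so the identity reads
\[
\sum_{\sigma\in Sh(1,2)}\epsilon(\sigma)\,l_3\big(l_1x_{\sigma(1)},x_{\sigma(2)},x_{\sigma(3)}\big)
\;+\;\sum_{\sigma\in Sh(2,1)}\epsilon(\sigma)\,l_2\big(l_2(x_{\sigma(1)},x_{\sigma(2)}),x_{\sigma(3)}\big)
\;+\;l_1\big(l_3(x_1,x_2,x_3)\big)=0.
\]
Evaluating on cocycles $x_1,x_2,x_3$, the first sum vanishes because $l_1x_{\sigma(1)}=0$, while the last term is an $l_1$-coboundary, hence zero in $H^\bullet(E,l_1)$; passing to cohomology therefore leaves $\sum_{\sigma\in Sh(2,1)}\epsilon(\sigma)\,l_2\big(l_2(\overline{x_{\sigma(1)}},\overline{x_{\sigma(2)}}),\overline{x_{\sigma(3)}}\big)=0$. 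Expanding the three $(2,1)$-unshuffles, this is precisely the graded symmetric Jacobi identity. Combined with the previous step, it shows that $\big(H^\bullet(E,l_1),l_2\big)$ is a symmetric Lie $\infty$-algebra with $l_k$ vanishing for $k\neq2$, that is, a symmetric graded Lie algebra in the sense introduced above; equivalently, via the d\'ecalage isomorphism together with (\ref{eq:decalage}), $H^\bullet(E,l_1)[1]$ carries a genuine graded Lie algebra structure as in (\ref{eq:gradedLie}).

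The only point requiring care is the bookkeeping with Koszul signs: one must check that the $(2,1)$-unshuffle sum above really reproduces the graded symmetric Jacobi identity, and that under d\'ecalage the signs match the normalization of (\ref{eq:gradedLie}). Since well-definedness, graded symmetry, and the vanishing on cocycles of the terms $l_3(l_1x_{\sigma(1)},x_{\sigma(2)},x_{\sigma(3)})$ and $l_1(l_3(x_1,x_2,x_3))$ are all immediate from (i)--(iii), there is no difficulty beyond this sign check.
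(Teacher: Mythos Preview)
Your proposal is correct and follows exactly the approach of the paper: the paper does not give a separate proof but simply records observations (i)--(iii) immediately before the proposition and concludes with ``Hence:'', and your argument is a careful unpacking of precisely those three observations. The only difference is level of detail---you spell out the well-definedness check and the $n=3$ unshuffle sum explicitly, which the paper leaves implicit.
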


%%%%%%%%%%%%%%%%%%%%%%%%%%%%%%%%%%%%%%%%%%%%%%%%%%%%%%%%%%%%%%%%%%%%%%

\subsection{Symmetric Lie ${{\infty}}$-algebras as graded manifolds over a point}

%%%%%%%%%%%%%%%%%%%%%%%%%%%%%%%%%%%%%%%%%%%%%%%%%%%%%%%%%%%%%%%%%%%%%

\label{sec:arity}

Let $(E,  \set{l_k}_{k\geq 1})$ be a finite dimensional symmetric Lie ${\infty}$-algebra.
%\begin{comment}
%The Lie $\infty$-algebra structure induces a degree $+1$ coaderivation $M_E:S(E)\to S(E)$ %given by, for each $k\in\Nn$,
%$$
%M_E(x_{1}\otimes\ldots \otimes x_k)=\sum_{i=1}^k\sum_{\sigma\in Sh(i, k-i)} %\epsilon(\sigma) l_i(x_{\sigma({1})}\otimes \ldots \otimes x_{\sigma{(i)}})\otimes %x_{\sigma{(i+1)}}\otimes \ldots \otimes x_{\sigma{(k)}},
%$$
%for all $x_{1}\otimes\ldots \otimes x_k\in S^k(E)$.
%
%Equation (\ref{eq:def:symm:L:infty:algebra}) is equivalent to $M_E^2=0$. The dual of the %coderivation $M_E$ is a derivation  ${\mathfrak X}: S(E^*)\to S(E^*)$ also squaring zero.
%\end{comment}
Consider the \textbf{reduced graded symmetric algebra}, i.e. the graded commutative algebra $(S^{\geq 1}(E^*),\odot)$
(``reduced" means that  $S^0(E^*)= \mathbb K$ is not included in the algebra structure). Elements in $(E_{-i_1})^*\odot \ldots\odot (E_{-i_k})^*$ are said to be of \textbf{degree} $i_1+\ldots+i_k$ and \textbf{arity} $k$.
  Let $ {\mathcal E}$  be the formal completion of $E$ with respect to the arity.
  Elements of $ {\mathcal E}$ shall be referred to as \textbf{functions} and are, by definition, formal sums
   $$ F = \sum_{k \geq 1} F^{(k)} $$
   with $ F^{(k)} \in S^k(E^*)$ an element of arity $k$.
%CLG ARE we using the next notation ???
  By $F(x_1, \dots,x_k) $, with $x_1, \dots, x_k \in E$, we mean the element of ${\mathbb K}$ obtained by pairing $F_k
  \in S^{k}(E^*) \simeq (S^k(E))^*$ with $x_1 \odot \dots \odot x_k \in S^k(E)$.
  
We denote by $\mathcal{E}_i^k$, the space of functions of arity $k$ and degree $i$, i.e.
 \begin{equation} 
 \label{eq:Eik} \mathcal{E}_i^k = \oplus_{i_1+ \dots+ i_k =i } (E_{-i_1})^*\odot \ldots\odot (E_{-i_k})^* .  
 \end{equation}
 When only the degree or the arity is specified, we shall denote the corresponding vector space by $ {\mathcal E}_i$ and $ {\mathcal E}^k$, respectively. %Notice that there is no unit in the algebra $ {\mathcal E}$.

%CLG NOT SURE WE NEED THIS REMARK 
%\begin{remark}\normalfont 
%We could work with infinite dimensional Lie $ \infty$-algebras, and consider formal infinite sums to be allowed in \eqref{eq:Eik}. The map $ S^{k}(E^*) \hookrightarrow (S^k(E))^*$ is then only an inclusion.
%\end{remark}

%Adding a unit, (equivalently, considering the completion of $(S^{\geq 0}(E^*),\odot)$), we obtain a graded commutative algebra that we denote by $\underline{\mathcal E}$.

Since it is the completion of  $(S^{\geq 1}(E^*),\odot)$),  $\E$ has no unit.
When a unit is added (i.e., if we consider the completion of  $(S(E^*),\odot)$ instead), we obtain a unital algebra that we shall denote by $\underline{{\E}} $.

We say that a map $\Psi: \mathcal{E}\to \mathcal{E}$  has degree $i \in {\mathbb Z}$ if
$\Psi(\mathcal{E}_{\bullet})\subset \mathcal{E}_{\bullet+i}$ and arity $k \in {\mathbb Z}$ if $\Psi({\mathcal E}^\bullet) \subset {\mathcal E}^{\bullet+k}$.
Any map of degree $i$ decomposes according to arity:
 $$ \Psi = \sum_{k \in \mathbb Z} \Psi^{(k)} .$$
(Notice that the sum runs on $\mathbb Z $: a linear map can very well reduce arity.)

We call  the graded derivations of $\mathcal{E}$ {\textbf{vector fields of the graded manifold $E$}}. The  vector space $\Der (\E) $ of vector fields of $E$   is a graded Lie algebra
with respect to the bracket
\begin{equation*}
\brr{Q,P}=Q\smalcirc P - (-1)^{|Q||P|} P\smalcirc Q, \qquad Q,P\in \Der (\E) .
\end{equation*}

  A symmetric Lie $\infty$-algebra structure  $(E, \set{l_k}_{k\geq 1})$ induces a  degree $+1$ derivation of  $\mathcal{E}$,  $Q_E:\E\to \E$, squaring to zero. This derivation decomposes according to its arity:
   $$ Q_E =\sum_{k \geq 0} Q_E^{(k)} $$
where, for each $k\geq 0$, the arity $k$ derivation $Q_E^{(k)}$
 is given  by:
\begin{equation*}
Q_E^{(k)}(\xi)(x_1\odot\ldots\odot x_{k+1})=(-1)^{|\xi|}\eval{\,\xi\,,\,l_{k+1}( x_{1},\ldots, x_{k+1})\,},
\end{equation*}
 for all $ \xi\in E^*$, $x_1,\ldots, x_{k+1}\in E$.

For each homogeneous $x\in E$, let $i_x:S(E^*) \to S(E^*)$ be the derivation of arity $-1$ and degree $|x|$ defined by the  evaluation map
\begin{equation*}
i_x(\xi)=\eval{\xi,x},\quad \xi\in E^*.
\end{equation*}

The vector field $Q_E$ satisfies
\begin{equation*}
 \brr{\,\brr{\,\brr{Q_E ,i_{x_1}}, i_{x_2}},\ldots, i_{x_k}}^{(-1)}=i_{(-1)^{k+1}l_k(x_1,\ldots ,x_k)},
\end{equation*}
for all $k\geq 1$ and $x_1,x_2, \ldots, x_k\in E$.

By going backward, one derives a Lie $\infty$-algebra structure on $E$ out of a degree $+1$ derivation of the completion $ {\mathcal E}$ of $S^{\geq 1}(E^*)$, which leads to the following:

\begin{prop}\label{prop:dual}\cite{LM,V05}  Let $E$ be a graded vector space of finite dimension,
	and let  $ {\mathcal E}$ be the formal completion of $S^{\geq 1}(E^*)$ with respect to arity.
	There is a one-to-one correspondence between:
	\begin{enumerate}[(i)]
		\item  symmetric Lie $\infty$-algebra structures on the graded vector space $E$,
		\item degree $+1$ vector fields on the graded manifold $E$ (= degree $+1$ derivations of $ {\mathcal E}$) squaring to $0$.
	\end{enumerate}
 \end{prop}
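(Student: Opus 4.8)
The plan is to prove the equivalence in two stages: first establish the bijection at the level of \emph{data} (forgetting both the condition $Q^2=0$ and the higher Jacobi identities \eqref{eq:def:symm:L:infty:algebra}), and then show that under this dictionary the two conditions correspond to one another.

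For the data, I would start from the observation that $E^{*}=\mathcal E^{1}$ generates $\mathcal E$ as a completed graded-commutative algebra and that every derivation of $\mathcal E$ is continuous for the arity filtration and bounded below in arity; hence a degree $+1$ derivation $Q$ is determined by its restriction $Q|_{E^{*}}\colon E^{*}\to\mathcal E$, and conversely any degree $+1$ linear map $E^{*}\to\mathcal E$ extends uniquely to a derivation by the graded Leibniz rule (a routine verification shows the extension is well defined on the symmetric algebra). Decomposing $Q|_{E^{*}}=\sum_{k\geq 0}Q^{(k)}|_{E^{*}}$ by arity, each component $Q^{(k)}|_{E^{*}}\colon E^{*}\to S^{k+1}(E^{*})\simeq\bigl(S^{k+1}E\bigr)^{*}$ transposes, using finite-dimensionality of $E$ in each degree, to a linear map $l_{k+1}\colon S^{k+1}E\to E$, which a weight count shows to be of degree $+1$; the formula displayed just before the statement is exactly this dictionary, and the iterated-bracket identity $\brr{\,\brr{\,\brr{Q_{E} ,i_{x_{1}}}, i_{x_{2}}},\ldots, i_{x_{k}}}^{(-1)}=i_{(-1)^{k+1}l_{k}(x_{1},\ldots ,x_{k})}$ recalled in the text recovers the $l_{k}$ from $Q_{E}$. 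This yields a bijection between degree $+1$ derivations of $\mathcal E$ and families $(l_{k})_{k\geq 1}$ of degree $+1$ symmetric maps $l_{k}\colon S^{k}E\to E$.

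It then remains to show that $Q_{E}\smalcirc Q_{E}=0$ if and only if $(l_{k})_{k\geq 1}$ satisfies \eqref{eq:def:symm:L:infty:algebra}. Since $Q_{E}$ is odd, $Q_{E}\smalcirc Q_{E}=\tfrac12[Q_{E},Q_{E}]$ is again a derivation of $\mathcal E$, of even degree $+2$, so it vanishes everywhere as soon as it vanishes on the generators $E^{*}$. I would fix $\xi\in E^{*}$ and homogeneous $x_{1},\dots,x_{n+1}\in E$ and compute the arity-$(n+1)$ part of $(Q_{E}\smalcirc Q_{E})(\xi)$ on $x_{1}\odot\cdots\odot x_{n+1}$. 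Writing $Q_{E}\smalcirc Q_{E}=\sum_{p+q=n}Q_{E}^{(p)}\smalcirc Q_{E}^{(q)}$ on that component and transposing the derivation property of $Q_{E}^{(p)}$ into the cofree-coderivation formula (valid for $F\in S^{q+1}(E^{*})$, with $p+q=n$)
$$\bigl(Q_{E}^{(p)}F\bigr)(x_{1}\odot\cdots\odot x_{n+1})=\pm\sum_{\sigma\in Sh(p+1,q)}\epsilon(\sigma)\,F\bigl(l_{p+1}(x_{\sigma(1)},\dots,x_{\sigma(p+1)})\odot x_{\sigma(p+2)}\odot\cdots\odot x_{\sigma(n+1)}\bigr),$$
and then substituting $F=Q_{E}^{(q)}\xi$, whose pairing is $F(y_{1}\odot\cdots\odot y_{q+1})=(-1)^{|\xi|}\eval{\xi,l_{q+1}(y_{1},\dots,y_{q+1})}$, one gets $(Q_{E}\smalcirc Q_{E})(\xi)(x_{1}\odot\cdots\odot x_{n+1})=(-1)^{|\xi|}\eval{\xi,\Sigma_{n+1}(x_{1},\dots,x_{n+1})}$, where $\Sigma_{n+1}$ is precisely the left-hand side of \eqref{eq:def:symm:L:infty:algebra} with $n+1$ arguments (the pair $(i,j)=(p+1,q+1)$ ranges over all pairs with $i+j=n+2$, matching $Sh(i,j-1)$). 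Since the pairing between $E^{*}$ and $E$ is nondegenerate and $\xi,x_{1},\dots,x_{n+1}$ are arbitrary, $(Q_{E}\smalcirc Q_{E})(\xi)=0$ for all $\xi$ exactly when $\Sigma_{m}=0$ for all $m$, i.e. exactly when \eqref{eq:def:symm:L:infty:algebra} holds, and both implications follow.

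The only genuinely delicate step is the sign bookkeeping in the cofree-coderivation formula: one must check that transposing an arity-raising graded derivation of $S(E^{*})$ produces the \emph{cofree} coderivation of $S(E)$ with corestriction $l_{p+1}$, that the Koszul rule for reordering the $x_{i}$ reproduces $\epsilon(\sigma)$ on the nose, and that these combine with the $(-1)^{|\xi|}$'s in the definitions of $Q_{E}^{(k)}$ and with the d\'ecalage to give \eqref{eq:def:symm:L:infty:algebra} exactly; everything else (generation of $\mathcal E$ by $E^{*}$, uniqueness of a derivation on generators, that $Q_{E}\smalcirc Q_{E}=\tfrac12[Q_{E},Q_{E}]$ is a derivation, and the degree/weight counts) is formal. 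Alternatively, one can recast the whole proof as the statement that degree $+1$ square-zero derivations of $\widehat{S}(E^{*})$ are the same as degree $+1$ square-zero coderivations of the completed cofree conilpotent cocommutative coalgebra on $E$, which by its universal property are classified by their corestrictions $l_{k}\colon S^{k}E\to E$, the codifferential condition being \eqref{eq:def:symm:L:infty:algebra}; this is the coalgebra definition of an $L_{\infty}$-algebra, cf. \cite{LS}, and only the compatibility with the present sign conventions then needs to be verified.
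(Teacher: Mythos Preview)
Your proposal is correct and follows the same line as the paper: the paper does not give a self-contained proof of this proposition but, in the paragraphs preceding the statement, sets up exactly the dictionary you describe (the formula for $Q_E^{(k)}(\xi)$ in terms of $l_{k+1}$, and the iterated-bracket identity recovering $l_k$ from $Q_E$), and then cites \cite{LM,V05} for the result; the analogous algebroid statement (Proposition~\ref{prop:dual2}) is proved later in the paper by writing out the same correspondence and declaring the equivalence of $Q\smalcirc Q=0$ with the higher Jacobi identities ``routine to check''. Your write-up simply makes that routine check explicit via the coderivation/transposition argument, which is the standard proof and is entirely consistent with the paper's conventions.
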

\noindent
The differential graded commutative algebra  referred to in item \emph{(ii)} of Proposition \ref{prop:dual} is sometimes seen as a ``$Q$-manifold over a point" \cite{V10}.

%\begin{remark}The differential graded commutative algebra  referred to in item \emph{(ii)} of Proposition \ref{prop:dual} is sometimes considered as a "$Q$-manifold over a point" or a "dg-manifold over a point", and elements in $ {\mathcal E}$ are referred to as its "functions". This geometric point of view makes even more sense for Lie $\infty$-algebroid, as we will see in Section .  \end{remark}

 \begin{remark}\normalfont
 \label{rem:Qandl}
 	It is extremely practical to look at the ``abstract" correspondence explained in Proposition \ref{prop:dual} in coordinates. Given
 	$\xi^1,\xi^2,\ldots\in E^*$  a homogeneous basis of $E^*$ and $x_1,x_2,\ldots\in E$  its dual basis, the explicit formulas that relate the vector field $Q_E $ and the Lie $\infty$-algebra brackets $\set{l_k}_{k\geq 1}$ in Proposition \ref{prop:dual} can be checked to be (for our conventions):
 	$$ \left\{ \begin{array}{rcl} Q_E&=&\displaystyle \sum_j\sum_{k,i_1,\ldots, i_k} \frac{1}{k!}Q_{i_1\ldots i_k}^{j} \, \xi^{i_k} \odot\ldots\odot  \xi^{i_1}\, \frac{\partial}{\partial \xi^j},\\
  l_k&=&\displaystyle \sum_{j,i_1,\ldots, i_k} \frac{(-1)^{|x_j|}}{k!}\, Q_{i_1\ldots i_k}^{j} \, \xi^{i_k} \odot \ldots \odot \xi^{i_1}\otimes x_j, \\
 	l_k ( x_{i_1} , \dots, x_{i_k} ) &=&   \displaystyle \sum_{j=1}^n (-1)^{|x_j|} Q_{i_1\ldots i_k}^{j} x_j. \end{array}\right.$$ %xxx check
 	The coefficients	$Q^{j}_{i_1, \dots ,i_k}  \in \mathbb K$ are unique if we assume them to be graded symmetric, i.e. $ Q^{j}_{i_{\sigma(1)}, \dots ,i_{\sigma(k)}} = \epsilon(\sigma) Q^{j}_{i_1, \dots ,i_k} $ for every permutation $\sigma$.
 	%xxx The $1/k!$ is not consistent with next section. Signs are not either.
 \end{remark}

\begin{convention}
In view of Proposition \ref{prop:dual}, we will use the notation
%s
%$\left( E,  %\set{l_k}_{k\geq 1}
%\right)$
%$\left( E, Q_E 
%\right)$, or
$\left( E, Q_E \equiv \set{l_k}_{k\geq 1}
\right)$
 to denote a Lie $\infty$-algebra of finite dimension, depending on the context.
\end{convention}

\begin{defn}
The \textbf{cohomology of a Lie $\infty$-algebra} $\left( E, Q_E \equiv \set{l_k}_{k\geq 1}
\right)$ is the cohomology defined by the differential $Q_E : \E_\bullet \to \E_{\bullet+1} $. It is a graded commutative algebra denoted by $H^\bullet(E,Q_E)$.
\end{defn}

\subsection{Morphisms of Lie ${{\infty}}$-algebras}
\label{sec:morphism}

A morphism of Lie $\infty$-algebras \cite{LM} is generally defined  as being a comorphism between symmetric coalgebras that is compatible with the Lie $\infty$-structures. When spelled out, it is equivalent to the following set of conditions. 

%Since a coalgebra morphism $\Phi:S(E)\to S(F)$ is completely defined by a collection of maps
%$$
%\Phi_k:S^k(E)\to F,\quad k\geq 1,
%$$
%we have the following definition of morphisms between Lie $\infty$-algebras.

\begin{defn}\label{defn:Lie:infty:morphism}
Let $(E,Q_E \equiv \set{l_k}_{k\geq 1})$ and $(F,Q_F \equiv \set{m_k}_{k\geq 1})$ be Lie $\infty$-algebras. A \textbf{Lie ${\infty}$-morphism } $\ds \Phi:(E,Q_E \equiv \set{l_k}_{k\geq 1})
\rightarrow
(F,Q_F \equiv \set{m_k}_{k\geq 1})$  is given by a collection of degree zero maps:
$$
\Phi_k:S^k(E)\to F,\quad k\geq 1,
$$
such that, for each $n\geq 1$,
\begin{equation*}
\sum_{\begin{array}{c} \scriptstyle{k+l=n}\\ \scriptstyle{\sigma\in Sh(k,l)}\end{array}}\!\!\!\!\!\!\! \varepsilon(\sigma) \Phi_{1+l}(l_k\otimes 1^{\otimes^l})(x_{\sigma(I)}) = \!\!\!\!\!\!\!\!\!\!\! \sum_{\begin{array}{c}\scriptstyle{k_1+\ldots+ k_j=n} \\ \scriptstyle{\sigma\in Sh(k_1,\ldots, k_j)}\end{array}} \!\!\!\!\!\!\!  \frac{\varepsilon(\sigma)}{j!}\,m_j(\Phi_{k_1}\otimes \Phi_{k_2}\otimes\ldots\otimes \Phi_{k_j})(x_{\sigma(I)}).
\end{equation*}

\end{defn}

In Definition \ref{defn:Lie:infty:morphism}, we do not need to assume $E$ to $F$ to be of finite dimension.
When it is the case, taking the dual of the linear maps $\Phi_k$, for all $ k \geq 1$, we obtain a family $\Phi^*_k:F^* \to S^k(E^*) $ of linear maps, which extend to a graded commutative algebra morphism $\Phi^*: {\mathcal F} \to {\mathcal E}$, with $\F$ and $\E$ being the graded commutative algebras
of functions on $F$ and $E$ respectively.

%When we consider  Lie $\infty$-algebras as graded manifolds, a Lie $\infty$-morphism
% induces an algebra morphism between sheaves of functions $\Phi^*: \F \to \E$ that commutes with the differentials $Q_E$ and $Q_F$. Hence we have *
This leads to the following alternative description of Lie $\infty$-morphisms:

\begin{prop}
	\label{prop:morphisms}
Let $\left( E, Q_E \equiv \set{l_k}_{k\geq 1}
\right)$ and $\left( F, Q_F \equiv \set{m_k}_{k\geq 1}
\right)$ be Lie $\infty$-algebras of finite dimensions with functions $ {\mathcal E}$
and $ {\mathcal F}$, respectively.
There is a one-to-one correspondence between:
\begin{enumerate}[(i)]
	\item 	Lie ${\infty}$-morphisms $\Phi $ from $\left( E, Q_E \equiv \set{l_k}_{k\geq 1}
	\right)$ to $\left( F, Q_F \equiv \set{m_k}_{k\geq 1}
	\right)$,
	\item graded commutative algebra morphisms $\Phi^*:\F \to \E$ commuting with vector fields:
	$$
	\Phi^*\smalcirc Q_F=Q_E \smalcirc \Phi^*.
	$$
\end{enumerate}
\end{prop}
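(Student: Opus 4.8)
The plan is to show that the two descriptions of a Lie $\infty$-morphism are essentially the same data seen through the décalage/dualization dictionary, so the content reduces to matching the defining identities of Definition~\ref{defn:Lie:infty:morphism} with the commutation relation $\Phi^*\smalcirc Q_F = Q_E\smalcirc\Phi^*$. First I would recall that, since $E$ and $F$ are finite-dimensional, dualizing the collection $\{\Phi_k:S^k(E)\to F\}_{k\ge1}$ gives maps $\Phi_k^*:F^*\to S^k(E^*)$ of degree zero; assembling these (and extending multiplicatively, using that $\Phi^*$ must be an algebra morphism) determines a unique degree-zero graded commutative algebra morphism $\Phi^*:\mathcal F\to\mathcal E$, and conversely any such algebra morphism is determined by its restriction to the generators $F^*$, hence comes from a unique family $\{\Phi_k\}_{k\ge1}$. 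This establishes the bijection at the level of raw data; what remains is to check that the Lie $\infty$-morphism equations hold if and only if $\Phi^*$ intertwines the homological vector fields.

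Next I would set up the computation of both sides of $\Phi^*\smalcirc Q_F=Q_E\smalcirc\Phi^*$ on an arbitrary generator $\eta\in F^*$, since an algebra morphism and a derivation-after-morphism are both determined by their values on generators (one uses here that $Q_E$ is a derivation and $\Phi^*$ an algebra morphism, so $Q_E\smalcirc\Phi^*$ is a $\Phi^*$-derivation, and likewise $\Phi^*\smalcirc Q_F$; two $\Phi^*$-derivations agreeing on generators agree everywhere). Evaluating $Q_F$ on $\eta$ produces, via the formula $Q_F^{(k)}(\eta)(y_1\odot\cdots\odot y_{k+1})=(-1)^{|\eta|}\langle\eta,m_{k+1}(y_1,\dots,y_{k+1})\rangle$, a sum over arities encoding all the brackets $m_j$; then applying $\Phi^*$ and expanding $\Phi^*$ into the $\Phi_k^*$ on each symmetric power reproduces exactly the right-hand side of the morphism identity (the $1/j!$ and the unshuffle sums $Sh(k_1,\dots,k_j)$ appear precisely from expanding $\Phi^*(y\odot\cdots)$ using $\Phi^*$ multiplicative and graded symmetric). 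On the other side, $Q_E\smalcirc\Phi^*(\eta)=Q_E(\Phi^*(\eta))$ with $\Phi^*(\eta)=\sum_k\Phi_k^*(\eta)$; applying the derivation $Q_E$ to the arity-$k$ piece $\Phi_k^*(\eta)\in S^k(E^*)$ and dualizing reproduces the left-hand side $\sum \varepsilon(\sigma)\Phi_{1+l}(l_k\otimes 1^{\otimes l})$. Equating coefficients against arbitrary $x_1\odot\cdots\odot x_n$ then gives the equivalence for each $n$.

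Concretely I would carry it out in the coordinates of Remark~\ref{rem:Qandl}: write $Q_E$ and $Q_F$ with structure constants, write $\Phi^*$ on generators as $\Phi^*(\eta^a)=\sum_k \Phi^a_{i_1\cdots i_k}\,\xi^{i_1}\odot\cdots\odot\xi^{i_k}$, impose $\Phi^*\smalcirc Q_F=Q_E\smalcirc \Phi^*$ on each $\eta^a$, sort the resulting polynomial identity by arity, and observe that the arity-$n$ component is, up to the fixed Koszul signs, the dual of the Definition~\ref{defn:Lie:infty:morphism} identity for that $n$. Keeping the signs consistent with the conventions fixed earlier (the $(-1)^{|x_j|}$ in the $Q_E\leftrightarrow l_k$ dictionary, the Koszul rule for $f\otimes g$, and the $(-1)^{|\xi|}$ in the definition of $Q_E^{(k)}$) is the only delicate point; everything else is bookkeeping of unshuffles and factorials. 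The main obstacle I anticipate is therefore purely combinatorial-and-sign: verifying that the $1/j!$ normalizations together with the sum over $Sh(k_1,\dots,k_j)$ on the right-hand side of the morphism equation match exactly what comes out of expanding the algebra morphism $\Phi^*$ applied to a product of generators and then hitting it with the derivation $Q_E$, with no stray signs. I would handle this by reducing to the comorphism-of-coalgebras picture of \cite{LM} if the direct expansion becomes unwieldy: a morphism of Lie $\infty$-algebras is by definition a counital coalgebra morphism $S(E)\to S(F)$ commuting with the codifferentials, and dualizing a coalgebra morphism between (co)free coalgebras is exactly an algebra morphism between the completed symmetric algebras, with the codifferentials dualizing to $Q_E,Q_F$; this makes the equivalence immediate and relegates the signs to the already-established décalage and dualization conventions.
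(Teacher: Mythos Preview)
Your proposal is correct and in fact considerably more detailed than what the paper provides: the paper does not give a proof of this proposition at all, only the sentence preceding it noting that dualizing the $\Phi_k$ yields maps $\Phi_k^*:F^*\to S^k(E^*)$ which extend to an algebra morphism $\Phi^*:\mathcal F\to\mathcal E$, and then states the proposition as the resulting ``alternative description'' (with a reference to \cite{V10} for the $Q$-manifold interpretation). Your plan---bijection on raw data via dualization, reduction to generators using that both sides are $\Phi^*$-derivations, and then matching arity-by-arity with the option of falling back on the coalgebra picture of \cite{LM}---is exactly the standard argument and fills in what the paper leaves implicit.
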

\noindent
The second item in Proposition \ref{prop:morphisms} means that $\Phi $ is a morphism of $Q$-manifolds over a point \cite{V10}.

\begin{remark}
\normalfont
For every Lie $\infty$-morphism as above,  $\Phi_1:(E,l_1)\to (F,m_1)$ is a chain map.
The map it induces at the level of cohomology is a graded Lie algebra morphism.
Moreover, the Lie $\infty$-morphism $\Phi$ is a Lie $\infty$-isomorphism if and only if $\Phi_1$ is a chain isomorphism.
\end{remark}

\begin{defn}
Let $\ds \Phi:\left( E, Q_E \equiv \set{l_k}_{k\geq 1}
\right) \rightarrow \left( F, Q_F \equiv \set{m_k}_{k\geq 1}
\right))$ be a Lie $\infty$-algebra morphism. We say $\Phi$ is a \textbf{Lie $\infty$-quasi-isomorphism} if the chain map $\Phi_1:(E,l_1)\to (F,m_1)$ is a quasi-isomorphism.
\end{defn}

%CLG TIS REMARK SEEMS NOT NEEDED?
%\begin{remark} 
%If $\ds \Phi:(E,Q_E) \rightarrow (F,Q_F)$ is a quasi-isomorphism then $\Phi^*$ is an isomorphism in cohomology \cite{Refxxx}.
%\end{remark}

Let $\Omega^\bullet([0,1])$ stand for the de Rham complex of forms over $ [0,1]$ and let ${\rm d}_{dR}$ be its de Rham differential. Also, we shall denote by $t$ the coordinate in $[0,1]$.

%%% CLG I STOPPED HERE (JULY 26)

\begin{defn}
	\label{def:HomotopyMorphisms}
	Let $\left( E, Q_E \equiv \set{l_k}_{k\geq 1}
	\right)$ and $\left( F, Q_F \equiv \set{m_k}_{k\geq 1}
	\right)$ be Lie $\infty$-algebras of finite dimensions with functions $ {\mathcal E}$
	and $ {\mathcal F}$. A {\textbf{homotopy}} between  Lie $\infty$-algebra morphisms $\Phi,\Psi$
	is a morphism of graded commutative algebras:
	 \begin{equation}
	 \label{eq:homotopyPhi}
\Xi^* \colon	 ({\mathcal F}, Q_F ) \longrightarrow ({\mathcal E} \otimes \Omega^\bullet([0,1]), Q_E \otimes {\rm id}  +  {\rm id} \otimes {\rm d}_{dR}) 
	 \end{equation} 
	 which coincides with $ \Phi^*$ and $\Psi^*$ at $t=0$ and $1$, respectively.
	 
	 A {\textbf{homotopy equivalence}} between $\left( E, Q_E \equiv \set{l_k}_{k\geq 1}
	\right)$ and $\left( F, Q_F \equiv \set{m_k}_{k\geq 1}
	\right)$ is a pair of Lie $\infty $-algebroid morphisms:
	\begin{align*} 	 	\Phi^* \colon ({\mathcal E}, Q_E ) \longrightarrow ({\mathcal F}, Q_F )\\ 	 \Psi^* \colon ({\mathcal F}, Q_F ) 	\longrightarrow ({\mathcal E}, Q_E ) \end{align*} 	 
	whose compositions $ \Phi^*  \circ \Psi^*  $ and $ \Psi^*  \circ \Phi^*  $ are homotopy equivalent to the identity.
\end{defn}

Let us spell out the meaning of \eqref{eq:homotopyPhi}.
To start with, recall that an element in $  V \otimes C^\infty ([0,1]) $, with $  V$ a finite dimensional vector space, can be seen as a time-dependent element in $V$ that we should denote by $ F_t \otimes 1 $.
Similarly, since $\omega \in \Omega^1 ([0,1])$ reads $f(t) dt $ with $f(t) \in C^\infty ([0,1])$, an element in $ V \otimes \Omega^1 ([0,1]) $ can be seen as a time-dependent element in $ V$ that we shall denote by $ H_t \otimes {\textrm d}t $ (the dependence in $t$ being smooth in both cases).
As a consequence, an element of degree $i$ of ${V} \otimes \Omega^\bullet([0,1])$ can be seen as a sum   $ G_t \otimes 1 + H_t \otimes {\mathrm{d}}t$, where $G_t,H_t \in V$ are elements of degree $i$ and $i-1$, respectively, which  depend smoothly on $t \in [0,1]$.

With these conventions in mind, for any algebra morphism $\Xi^*$ as in \eqref{eq:homotopyPhi} and every $F \in \mathcal F$ of degree $i$, we have 
 $$ \Xi^* (F) = G_t \otimes 1 + H_t \otimes {\mathrm d}t,   $$
 for some time-dependent $G_t\in  \mathcal E_{i}$ and $H_t \in \mathcal E_{i-1}  $. For every fixed $t \in [0,1]$, we define $\Xi^*_t,H_t: \mathcal F \to \mathcal E $ of respective degrees $0$ and $-1$ by: 
  $$ \Xi_t^* \colon F \mapsto G_t \hbox{ and } H_t \colon F \mapsto H_t. $$
  Since $\Xi^* $ is a graded algebra morphism, so is $\Xi_t^* $ for every $t \in [0,1]$, and  for all $F_1, F_2 \in \mathcal F $:
 $$ 
 H_t ( F_1 \odot  F_2) = H_t (F_1) \odot\Xi^*_t (F_2) + (-1)^{|F_1|} \Xi^*_t(F_1) \odot H_t (F_2) .
 $$
Since $\Xi^* $ is a chain map, so is $\Xi_t^* $, for all $t$ (it is therefore a family of Lie $\infty$-algebroid morphisms) and the following relation holds:
 $$ \frac{\mathrm d \Xi_t^*}{\mathrm dt}  = H_t \circ Q_F + Q_E \circ H_t .  $$

%CLG Do we need this???

%We refer to \cite{xxx} for the following proposition.

%\begin{prop}	\label{prop:iffHomotopy}	Let $\left( E, Q_E \equiv \set{l_k}_{k\geq 1}	\right)$ and $\left( F, Q_F \equiv \set{m_k}_{k\geq 1}	\right)$ be Lie $\infty$-algebras of finite dimensions with functions $ {\mathcal E}$	to $ {\mathcal F}$, respectively. Two Lie $\infty $-algebra morphisms $ \Phi,\Psi$ are homotopic if and only if the chain maps $\Phi_1,\Psi_1 $ are homotopic.
%\end{prop}

Homotopic morphisms induce the same map in cohomology, see, e.g. \cite{LLS}.
An {\bf{inverse up to homotopy}} of a  Lie $\infty$-algebra morphism $\Phi$
is a  Lie $\infty$-algebra morphism $ \Psi$ such that $\Phi \smalcirc \Psi$ and $ \Psi \smalcirc \Phi$ are homotopic to the identity maps.

\subsection{Representations of Lie $\infty$-algebras}

 A complex $(V,d)$ induces a  natural symmetric DGLA structure in $\End(V)[1]$, see Example \ref{ex:DGLA:End:E}.

\begin{defn} 
 A \textbf{representation} of a Lie $\infty$-algebra $(E,\set{ l_k}_{k\in\Zz})$ on a complex $(V,d)$ is a Lie $\infty$-morphism $$\Phi:(E,\set{ l_k}_{k\in\Zz})\rightarrow (\End(V)[1],\partial, \set{\, , \,}).$$
\end{defn}
% given by a differential~$\partial$
%\begin{equation*}
%\partial \phi=-d \smalcirc \phi + (-1)^{|\phi|+1} \phi\smalcirc d,
%\end{equation*}
%and the bracket symmetric bracket $\set{\, , \,}$
%\begin{equation*}
% \set{\phi,\psi}=(-1)^{|\phi|Notice (\End(V)[1],\partial, \set{\, , \,}).$$
%\end{defn}

\begin{remark}
\normalfont
\label{rem:firstversion}
 Equivalently, a representation of $E$ is defined by a collection of degree $1$ maps
$$\Phi_k:S^k(E)\to \End(V),\quad k\geq 1,$$
such that, for each $n\geq 1$, $x_1,\dots, x_n\in E$,
\begin{align}
%\label{eq:FlatMeans}
&\sum_{\begin{array}{c} \scriptstyle{i=1}\\ \scriptstyle{\sigma\in Sh(i,n-i)}\end{array}}^{\scriptstyle{n}}\!\!\!\!\!\!\!\!\!\! \varepsilon(\sigma)\Phi_{n-i+1}\left(l_i\left(x_{\sigma(1)}, \ldots, {x_{\sigma(i)}}\right), {x_{\sigma(i+1)}}, \ldots, {x_{\sigma(n)}}\right)= \nonumber\\
&=   \partial  \Phi_n(x_1,\ldots, x_n)+ \frac{1}{2} \!\!\!\!\!\!\!\!\!\!\sum_{\begin{array}{c} \scriptstyle{j=1}\\ \scriptstyle{\sigma\in Sh(j,n-j)}\end{array}}^{\scriptstyle{n-1}} \!\!\!\!\!\!\!\!\!\!\varepsilon(\sigma)\set{\Phi_j({x_{\sigma(1)}}, \ldots, {x_{\sigma(j)}}) , \Phi_{n-j}({x_{\sigma(j+1)}}, \ldots, {x_{\sigma(n)}}) }. \label{eq:def:representation}%\nonumber
\end{align}
It is convenient to define $\Phi_0 $ to be the differential  $d\colon V \to V$. \end{remark}

\begin{remark}
\normalfont
\label{rem:PhikFOrRepresentation}
Equation (\ref{eq:def:representation}) may then be expressed in a more concise manner:
\begin{equation}\label{eq:def:representation:LModule}
\sum_{\begin{array}{c} \scriptstyle{i=1}\\ \scriptstyle{\sigma\in Sh(i,n-i)}\end{array}}^{\scriptstyle{n}} \!\!\!\!\!\!\!\!\!\!\varepsilon(\sigma) \, \tilde\Phi_{n-i}(\tilde\Phi_{i-1}(y_{\sigma(1)}, \ldots, y_{\sigma(i)}), y_{\sigma(i+1)}, \ldots, y_{\sigma(n)})=0,
\end{equation}
where $y_i = x_i + v_i \in  E \oplus V$ for all index, and where
$$  \tilde\Phi_{i}( x_1 + v_1,  \dots x_i + v_i ) = l_i(x_1, \dots, x_i) + \sum_{k=1}^i \epsilon_k   \Phi_{i-1}(x_1 , \dots,\widehat{x_k} , \dots x_i  ) (v_k),$$
with $\epsilon_k= (-1)^{|v_k|(|x_{k+1}|+ \dots + |x_{i}| )}$.
%Also, $\tilde{\Phi}_0 (v)= d(v) $ for all $ v \in V $.%$\tilde\Phi_{i-1}(x_{\sigma(1)}, \ldots, x_{\sigma(i)})=l_i(x_{\sigma(1)}, \ldots, x_{\sigma(i)})$.
%where
%$\tilde{\Phi}_0 (x+v)= \ell_1(x) + d(v) $ for all $x\in E,v \in V$, with $d$ being the differential of $V$.
%Given a representation $\Phi:E\rightarrow \End(V)[1]$, defined by the collection of degree $+1$ maps $\Phi_k:S^k(E)\to \End(V)$, $k\geq 1$, one may consider the collection of degree $1$ maps
%$\tilde\Phi_k:S^{k}(E)\otimes V\to V$, $k\geq 0$. 
%By convention, 
%
%
%where $x_1,\ldots,x_{n-1}\in E$ and $x_{n}\in V$.
%
%Let us explain the  notations. First, $\tilde\Phi_0:V\to V$ denotes the differential of $V$. 
%
%Notice that
% for each $\sigma\in Sh(i,n-i)$, then either $x_{\sigma(i)}=x_n$ or $x_{\sigma(n)}=x_n$.
%In the first case, one defines
%\begin{align*}\tilde\Phi_{n-i}&(\tilde\Phi_{i-1}(x_{\sigma(1)}, \ldots, x_{\sigma(i)}), x_{\sigma(i+1)}, \ldots, x_{\sigma(n)})\\
%&=(-1)^{(\sum_{j=1}^i|x_{\sigma(j)}|+1)(\sum_{j=i+1}^n|x_{\sigma(j)|}|)}\tilde\Phi_{n-i}(x_{\sigma(i+1)}, \ldots, x_{\sigma(n)}, \tilde\Phi_{i-1}(x_{\sigma(1)}, \ldots, x_{\sigma(i)})),
%\end{align*}
%and in the second case, we take $\tilde\Phi_{i-1}(x_{\sigma(1)}, \ldots, x_{\sigma(i)})=l_i(x_{\sig%ma(1)}, \ldots, x_{\sigma(i)})$.
%\end{remark}
%\begin{remark}
Equations \eqref{eq:def:representation:LModule} mean that the family $\tilde{\Phi} $  equips  $E \oplus V$ with a Lie $\infty$-algebra structure, called the \textbf{semi-direct product} of a Lie $\infty $-algebra with its representation. For  $\mathfrak g $ a Lie algebra and $V$ a vector space, we recover the usual semi-direct product Lie algebra $ \mathfrak g \oplus V $.
\end{remark}

Let us give the dual description.
The tensor product $\E \otimes V^*$ comes with a natural left $\E$-module structure given by $F \cdot (G \otimes \al):= (F \odot G) \otimes \al$ for all $F,G \in \E$, $\al\in V^*$.

\begin{prop} \label{repres:1to1}
	 Let $ \left(E, Q_E \equiv \set{l_k}_{k\geq 1} \right)$ be a Lie $\infty$-algebra of finite dimension, and $ \underline \E$ be its unital graded commutative algebra of functions.
	 %\footnote{In this proposition, we will add a unit to $ {\mathcal E}$ {\green \underline{{\E}}??.}}.
	 Let $V$ be a graded vector space.
	 There is a $1$-$1$ correspondence between:
	 \begin{enumerate}[(i)]
	 	\item representations of
	 	% a finite dimensional Lie $\infty$-algebra 
	 	$ \left(E, Q_E \equiv \set{l_k}_{k\geq 1} \right)$  on the complex $(V,d)$,
	 	\item degree $+1$ maps  $\mathcal{D}:{\mathcal E}\otimes V^*\to{\mathcal E}\otimes V^*$,
	 	 \begin{enumerate}
	 	 	\item which extends $d^*: V^* \to V^*$, in the sense that the component of $ \mathcal{D}(1 \otimes \beta)$ in $1 \otimes V^*$ is $ 1 \otimes d^*(\beta)$, for all $\beta \in V^*$ ,
	 	 	\item which squares to $0$, i.e. $\mathcal{D}^2=0$,
	 	 	\item which is compatible with $Q_E$ in the sense that:
	 	 		$$
	 	 		\mathcal{D}(F\cdot \psi )=Q_E(F)\cdot \psi + (-1)^{i}F\cdot \mathcal{D}\psi, \quad F \in {\E}_i, \psi \in {\mathcal E}\otimes V^*.
	 	 		$$
	 	 	\end{enumerate}
	 \end{enumerate}
\end{prop}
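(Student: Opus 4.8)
The plan is to route both directions of the stated correspondence through the semi-direct product Lie $\infty$-algebra of Remark~\ref{rem:PhikFOrRepresentation} and its dual $Q$-manifold description provided by Proposition~\ref{prop:dual}. First I would recall that, by Remark~\ref{rem:PhikFOrRepresentation}, a representation of $\left(E,Q_E\equiv\set{l_k}_{k\geq1}\right)$ on the complex $(V,d)$ is the same datum as a Lie $\infty$-algebra structure $\tilde\Phi$ on $E\oplus V$ whose brackets $\tilde\Phi_i$ have $E$-component equal to $l_i$ on the $E$-inputs and independent of the $V$-inputs, and $V$-component homogeneous of degree one in the $V$-inputs, with $\tilde\Phi_1|_V=\Phi_0=d$. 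Dualizing via Proposition~\ref{prop:dual}, such structures correspond bijectively to degree $+1$, square-zero derivations $Q$ of $\widehat{S^{\geq1}}\big((E\oplus V)^*\big)=\widehat{S^{\geq1}}(E^*\oplus V^*)$, and the two structural constraints on $\tilde\Phi$ translate, on the generators $E^*\oplus V^*$, into: (i) $Q(\xi)\in\E$ for all $\xi\in E^*$ (so $Q$ preserves $\E$) and $Q|_{\E}=Q_E$; and (ii) $Q(\beta)\in\underline{\E}\otimes V^*$ for all $\beta\in V^*$.

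Next I would observe that a derivation of $\widehat{S^{\geq1}}(E^*\oplus V^*)$ is freely determined by its values on $E^*\oplus V^*$, so --- $Q_E$ being fixed by (i) --- such a $Q$ amounts to a degree $+1$ linear map $V^*\to\underline{\E}\otimes V^*$; by the Leibniz rule together with (i) this $Q$ preserves the $\E$-submodule $\underline{\E}\otimes V^*$ of functions of degree one in $V^*$, and its restriction there is a degree $+1$ operator $\mathcal{D}:=Q|_{\underline{\E}\otimes V^*}$ satisfying property (c) tautologically, since (c) is nothing but the Leibniz rule for $Q$ on $\underline{\E}\otimes V^*$. I would then identify (a) as the dual of $\tilde\Phi_1|_V=\Phi_0=d$, i.e.\ the requirement that the $1\otimes V^*$-component of $Q(\beta)$ equal $1\otimes d^*\beta$, so that imposing (a) pins the underlying complex to be $(V,d)$. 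Finally, since $Q$ is an odd derivation, $Q^2$ is again a derivation and hence vanishes iff it vanishes on generators: on $E^*$ one gets $Q^2(\xi)=Q_E^2(\xi)=0$ for free, while on $V^*$, using $Q|_{\underline{\E}\otimes V^*}=\mathcal{D}$, one gets $Q^2(\beta)=\mathcal{D}^2(1\otimes\beta)$, so that $Q^2=0$ is equivalent to (b). Reading this chain of equivalences in both directions gives the bijection: a representation produces a $Q$, hence a $\mathcal{D}$ obeying (a)--(c); conversely a $\mathcal{D}$ with (a)--(c) defines $Q$ on generators by $Q|_{E^*}=Q_E|_{E^*}$ and $Q|_{V^*}=\mathcal{D}(1\otimes-)$, which extends to a degree $+1$ square-zero derivation whose associated Lie $\infty$-structure on $E\oplus V$ is of semi-direct product form, i.e.\ a representation on $(V,d)$.

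The step I expect to require the most care is the bookkeeping around unital versus reduced symmetric algebras and the identification of the space of functions of $V^*$-degree $\leq1$ with $\underline{\E}\otimes V^*$: one must check that constraints (i)--(ii), stated only on generators, propagate to this whole $\E$-submodule and to nowhere else --- in particular that $Q$ produces no ``curvature'' term of $V^*$-degree $0$ out of $V^*$ --- and that condition (c) of the statement is precisely the Leibniz rule for $Q$ restricted there, no more and no less. Everything else is a direct translation through Proposition~\ref{prop:dual} and Remark~\ref{rem:PhikFOrRepresentation}; alternatively one could verify the correspondence in the coordinates of Remark~\ref{rem:Qandl} by comparing the Taylor coefficients of $\mathcal{D}$ with the maps $\Phi_k$, but the coordinate-free route above sidesteps the attendant sign chase.
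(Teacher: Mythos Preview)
Your approach is correct and is essentially the same as the paper's: route the correspondence through the semi-direct product Lie $\infty$-algebra on $E\oplus V$ from Remark~\ref{rem:PhikFOrRepresentation}, dualize via Proposition~\ref{prop:dual}, and identify $\mathcal D$ as the restriction of $Q_{E\oplus V}$ to the submodule $\underline{\E}\otimes V^*$ (which is preserved precisely because the semi-direct product brackets vanish on two $V$-inputs). Your write-up is considerably more detailed than the paper's three-line proof, and your flagged caveat about the unital versus reduced algebra is apt, since the statement itself writes $\mathcal E\otimes V^*$ but then refers to $\mathcal D(1\otimes\beta)$.
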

\begin{proof}
    The dual of the semi-direct product Lie $\infty $-algebra structure on $E \oplus V $, defined in Remark \ref{rem:PhikFOrRepresentation}, gives a derivation $ Q_{E \oplus V} $ of $S(E^* \oplus V^* ) = \underline{\mathcal E} \otimes S(V^*) $ which restricts to ${\mathcal E} \oplus \underline{\mathcal E} \otimes V^* $ (because all brackets on the semi-direct product  are zero when two elements in $V$ are considered). This restriction $\mathcal D $ satisfies the required conditions. This construction can be reverted. 
\end{proof}
%$$\begin{proof} The operator $\mathcal D $ is *defined by:$$\Phi(F,v)=(-1)^{|F|}\mathcal{D}(v)(F) $$for all $F \in \mathcal F$ and $v \in V$. The rest is a direct computation.\end{proof}

\noindent
The object in item \emph{(ii)} of Proposition \ref{repres:1to1} deserves to referred to as a flat connection.

For $V$ a representation of $\left( E, Q_E \equiv \set{l_k}_{k\geq 1} \right)$, its dual $V^*$ comes with a natural representation structure: it suffices to compose the maps $\Phi_k : S^k(E) \to {\mathrm{End}}(V)[1] $, $k\geq 0$, as in Remark
\ref{rem:firstversion}, with the natural symmetric graded Lie algebra morphism
 $$ {\mathrm{End}}(V)[1] \to {\mathrm{End}}(V^*)[1] $$
 that sends a linear map $\phi $ to
  $-\phi^* $. We call this representation the \textbf{dual representation}.

\paragraph{\textbf{Adjoint and coadjoint representation}}

An important example of a representation is given by a Lie $\infty$-algebra adjoint representation on itself. In fact, we first define its dual, i.e. the coadjoint representation.

As before, let $\left( E, Q_E \equiv \set{l_k}_{k\geq 1} \right)$ be a Lie $\infty$-algebra of finite dimension with functions $ {\mathcal E}$. Let us add a unit to $ {\mathcal E}$, hence obtaining a grade commutative algebra $\underline{{\E}} $.  The space $\Der (\E) $  of derivations of $\E$, i.e. vector fields of the graded manifold $E$,   is naturally identified with $\underline{{\E}} \otimes E$.

The  Lie bracket $\brr{{Q_E}, \, -\,}:\Der (\E)\to \Der (\E)$ satisfies all four conditions in item (ii) in Proposition \ref{repres:1to1}. Therefore, it defines a representation of the Lie $\infty$-algebra $\left( E, Q_E \equiv \set{l_k}_{k\geq 1} \right)$  on $(E^*,-l_1^*)$ %\comm{\green $(E^*,-l_1^*)$?}
%:
%$$\ad_{Q_E}=\brr{ Q_E, \, - \,}:\underline{{\E}}\otimes E \to \underline{{\E}} \otimes E.$$
that we call \textbf{coadjoint representation}. 

Let us spell out its dual representation.
%By item (i) in Proposition \ref{repres:1to1}, $\ad_{Q_E}$ corresponds to a Lie $\infty$-morphism $\ad$ from $ \left( E, Q_E \equiv \set{l_k}_{k\geq 1} \right)$
%to the symmetric DGLA $\End(E)[1]$.
It is routine to check that %it is defined (in the conventions of Remark \ref{rem:firstversion}) by $\ad^{(0)}=l_1$ and 
the following collection of degree $+1$ maps:
\begin{equation} \label{eq:adjoint:representation:algebra}
\begin{array}{rrcl}
\ad^{(k)}:& S^k(E) &\to&  \End(E) \\
 & \;x_1\odot\ldots \odot x_k & \mapsto &  \ad^{(k)}_{x_1 \odot \dots \odot x_k} := l_{k+1}\left( x_1,\ldots, x_k, \, \, \, -\,\,  \right), 
\end{array}\quad k\geq 1,
\end{equation}
defines $\ad=\sum_{k\geq 1}\ad^{(k)}$,  the representation of the Lie $\infty$-algebra $E$ on $(E,l_1)$ dual to the coadjoint representation. 

\begin{defn}
The representation $\ad$  is called   the \textbf{ adjoint representation} of  $\left( E, Q_E \equiv \set{l_k}_{k\geq 1} \right)$. %and will be  also  denoted by $\ad_E$ or simply $\ad$.
\end{defn}

%%%%%%%%%%%%%%%%%%%%%%%%%%%%%%%%%%%%%%%%%%%%%%%% The modular class of an L_{\infty}$ ALGEBRA %%%%%%%%%%%%%%%%%%%%%%%%%%%%%%%%%%%%%%%%%%%%%%%%%%%%%

\section{The modular class of a Lie $\infty$-algebra}
\label{LieInftyModular2}

Let  $\left( E, Q_E \equiv \set{l_k}_{k\geq 1} \right)$ be a Lie $\infty$-algebra of finite dimension.

 The restriction of the adjoint representation $\ds \ad:S(E)\to \End(E)[1]$ given by (\ref{eq:adjoint:representation:algebra}) to the space $S(E)_{-1}$ of degree $-1$ elements of $S(E)$,  is a  linear map
 $$
 {\ad}_{|S(E)_{-1}}:S(E)_{-1} \subset S(E)\to \End_0(E).
 $$
 In other words, for all homogeneous $x_{1} ,\dots, x_{k}  \in E$ whose degree add up to $ -1$, $\ad^{(k)}_{x_1 \odot \dots \odot x_k}$ is a linear endomorphism of $E$ of degree $0$,
 i.e. a family indexed by $ i \in {\mathbb Z}$ of linear endomophisms of $E_{i}$.
For each $k\geq 1$, let $\omega^{(k)}\in \mathcal{E}_{1}^k$ be defined, for all homogeneous $x_{1} ,\dots, x_{k}  \in E$ whose degree add up to $ -1$, by
 $$ \omega^{(k)}( x_{1},\ldots, x_{k})=\str \ad^{(k)}_{x_{1} \odot \ldots \odot x_{k}}, $$
where $\str$ denotes the super trace operator on $E$, i.e. the alternate sum of the traces
of the restriction to $E_i$ of $\ad^{(k)}_{x_{1} \odot \ldots \odot x_{k}}$.

\begin{defn}\label{def:modular} We call \textbf{modular function} of a  Lie $\infty$-algebra $\left( E, Q_E \equiv \set{l_k}_{k\geq 1} \right)$ of finite dimension, the function  $\omega_E \in \E_{+1}=S(E^*)_{+1}$ defined by
 $$\omega_E=\sum_{k \geq 1} \omega^{(k)},$$
 with $\omega^{(k)} \in S^{k}(E^*)_{+1}$.
\end{defn}

\begin{remark}
\normalfont
It deserves to be noticed that we not really need $E$ to be of finite dimension. To define the modular function, it is enough to assume it is of finite rank in every degree. It even suffices that the super-trace is defined.  
\end{remark}

Let $\xi^1, \dots,\xi^n$ be a basis of $ E^*$, made of homogeneous elements. 	Using conventions of Remark \ref{rem:Qandl}, any derivation $Q$ of $ {\E}$, of degree $i$ reads:
 $$ Q=  \sum_{k \in {\mathbb N}}\!\!\!\!\! \sum_{  {\tiny{\begin{array}{c} 1 \leq i_1,\dots,i_k \leq n \\
 	j=1, \dots, n \\ |\xi^{i_1}| + \dots + |\xi^{i_k}| - |\xi^{j}|= i \end{array}}}} \!\!\!\!\!\!\!\!\!\! \frac{1}{k!}\,\,\, Q^{j}_{i_1, \dots ,i_k} \xi^{i_k} \odot \dots \odot \xi^{i_1} \, \, \, \frac{\partial}{\partial \xi^j}   .$$
We define its \textbf{divergence} to be the element in $ \underline{\E} $ given by:
\begin{align*} {\rm div} (Q) &:=(-1)^{i+1}\sum_{j=1}^n \frac{\partial}{\partial \xi_j}Q(\xi_j)\\
 &=(-1)^{i+1} \sum_{k \in {\mathbb N}} \!\!\!\!\!\!\! \sum_{  {\tiny{\begin{array}{c} 1 \leq i_1,\dots,i_k \leq n \\
			j=1, \dots, n \\ |\xi^{i_1}| + \dots + |\xi^{i_k}| - |\xi^{j}|= i \end{array}}}} \!\!\!\!\!\!\! \frac{1}{k!} Q^{j}_{i_1, \dots ,i_k} \frac{\partial}{\partial \xi^j} \left( \xi^{i_k} \odot \dots \odot \xi^{i_1} \right).\\
&=  (-1)^{i+1}\sum_{k \in {\mathbb N}} \!\!\!\!\!\!\! \sum_{  {\tiny{\begin{array}{c} 1 \leq i_1,\dots,i_{k-1} \leq n \\
			j=1, \dots, n \\ |\xi^{i_1}| + \dots + |\xi^{i_{k-1}}| = i \end{array}}}} \!\!\!\!\!\!\! \frac{1}{(k-1)!} Q^j_{i_1, \dots ,i_{k-1},j} \xi^{i_{k-1}} \odot \dots \odot \xi^{i_1}  \end{align*}
			
The next proposition implies that our definition of the modular class coincides with the ones of \cite{AB} and \cite{V07}.			
			
\begin{prop}
	\label{prop:divergence}
Let $\left( E, Q_E \equiv \set{l_k}_{k\geq 1} \right)$  be a Lie $\infty$-algebra of finite dimension.
\begin{enumerate}
 \item
The modular function $\omega_E$ is  $Q_E$-closed, i.e. $Q_E(\omega_E)=0$.
 \item
The modular function coincides with the divergence of the vector field $Q_E$:
$$\omega_E=\mathrm{div }( Q_E).$$
\end{enumerate}
\end{prop}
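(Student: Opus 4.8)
The plan is to prove the two statements by reducing everything to a coordinate computation in the basis $\xi^1,\dots,\xi^n$ of $E^*$, using the explicit formulas for $Q_E$ and $l_k$ from Remark \ref{rem:Qandl}. First I would establish part (2), namely $\omega_E = \mathrm{div}(Q_E)$, since it is essentially a matter of matching two explicit formulas; part (1) will then follow cheaply because the divergence of a $Q$-closed vector field (here $[Q_E,Q_E]=0$) is automatically $Q_E$-closed. So the first step is: using the last displayed formula for $\mathrm{div}(Q)$ applied to $Q_E$ (which has degree $i=+1$, so the sign $(-1)^{i+1}$ is $+1$), write
$$ \mathrm{div}(Q_E) = \sum_{k\in\Nn}\sum_{\substack{1\le i_1,\dots,i_{k-1}\le n\\ j=1,\dots,n\\ |\xi^{i_1}|+\dots+|\xi^{i_{k-1}}|=1}} \frac{1}{(k-1)!}\, (Q_E)^j_{i_1,\dots,i_{k-1},j}\, \xi^{i_{k-1}}\odot\dots\odot\xi^{i_1}. $$
The coefficient $\sum_j (Q_E)^j_{i_1,\dots,i_{k-1},j}$ is, by the third formula in Remark \ref{rem:Qandl} (that $l_k(x_{i_1},\dots,x_{i_k})=\sum_j(-1)^{|x_j|}(Q_E)^j_{i_1\dots i_k}x_j$) precisely $\sum_j (-1)^{|x_j|}\langle \xi^j, l_k(x_{i_1},\dots,x_{i_{k-1}},x_j)\rangle$, which is exactly the supertrace of the degree-$0$ endomorphism $y\mapsto l_k(x_{i_1},\dots,x_{i_{k-1}},y)=\ad^{(k-1)}_{x_{i_1}\odot\dots\odot x_{i_{k-1}}}(y)$, the sign $(-1)^{|x_j|}$ producing the alternating sum over degrees. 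Hence the arity-$(k-1)$ component of $\mathrm{div}(Q_E)$ coincides, by the definition of $\omega^{(k-1)}$, with $\omega^{(k-1)}$; summing over $k$ gives $\mathrm{div}(Q_E)=\sum_{m\ge 1}\omega^{(m)}=\omega_E$. One must be a little careful that the constraint $|\xi^{i_1}|+\dots+|\xi^{i_{k-1}}|=1$ on the left matches the condition that the degrees of $x_{i_1},\dots,x_{i_{k-1}}$ add up to $-1$ (recall $|\xi^i|=-|x_i|$), so that $\ad^{(k-1)}$ really lands in $\End_0(E)$ and its supertrace is defined; this is automatic.

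Next, for part (1), I would argue abstractly rather than by recomputation. Since $\omega_E=\mathrm{div}(Q_E)$ by part (2), it suffices to show $Q_E(\mathrm{div}(Q_E))=0$. The clean way is to prove a small lemma: for any homogeneous vector field $Q$ on the graded manifold $E$, one has $\mathrm{div}([Q,Q]) = 2\, Q(\mathrm{div} Q) \pm (\text{terms vanishing when }[Q,Q]=0)$ — more precisely, the Lie-derivative formula $\mathrm{div}([Q,P]) = Q(\mathrm{div}P) - (-1)^{|Q||P|}P(\mathrm{div}Q)$ together with the fact that $\mathrm{div}$ is (up to sign) the divergence with respect to the coordinate Berezinian volume. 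Applying this with $P=Q=Q_E$ gives $\mathrm{div}([Q_E,Q_E]) = Q_E(\mathrm{div}Q_E) - (-1)^{|Q_E|^2}Q_E(\mathrm{div}Q_E)$; since $|Q_E|=1$ is odd this is $Q_E(\mathrm{div}Q_E)+Q_E(\mathrm{div}Q_E)=2Q_E(\mathrm{div}Q_E)$, and the left side is $\mathrm{div}(0)=0$ because $Q_E$ squares to zero, i.e. $[Q_E,Q_E]=2Q_E^2=0$. Hence $Q_E(\omega_E)=Q_E(\mathrm{div}Q_E)=0$. If one prefers to avoid invoking the Berezinian language, the identity $\mathrm{div}([Q,P])=Q(\mathrm{div}P)-(-1)^{|Q||P|}P(\mathrm{div}Q)$ can be checked directly from the coordinate formula for $\mathrm{div}$ given in the excerpt, which is a finite (if slightly tedious) computation using $[Q,P](\xi^j)=Q(P(\xi^j))-(-1)^{|Q||P|}P(Q(\xi^j))$ and the Leibniz rule for the derivations $\partial/\partial\xi^j$.

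The main obstacle I anticipate is bookkeeping of Koszul signs, in two places. First, in the reduction of the divergence coefficient to a supertrace one must check that the sign $(-1)^{|x_j|}$ appearing in Remark \ref{rem:Qandl} is exactly the sign that turns $\sum_j\langle\xi^j,\ad^{(k-1)}(x_j)\rangle$ into an \emph{alternating} sum of ordinary traces $\sum_i(-1)^i\tr(\ad^{(k-1)}|_{E_i})$ — this hinges on the graded-dual convention $(E^*)_i=(E_{-i})^*$ and on the sign conventions for pairing $S^k(E^*)$ with $S^k(E)$ recorded before Proposition \ref{prop:dual}. Second, when proving the divergence–bracket identity for part (1), the Koszul signs produced by moving $\partial/\partial\xi^j$ past factors $\xi^{i_\ell}$ and by the graded commutator must be tracked carefully; the ``miracle'' that makes it work is precisely that $|Q_E|$ is odd, so the two terms add rather than cancel. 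I would present part (2) in full coordinate detail and treat part (1) via the general divergence identity, stating that identity as a lemma (with a one-line coordinate-computation justification) so as not to obscure the argument.
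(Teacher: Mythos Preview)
Your argument for part (2) is essentially the paper's: both match the coordinate expression for $\mathrm{div}(Q_E)$ against the supertrace of $\ad^{(k)}$ via the formulas in Remark~\ref{rem:Qandl}, and the sign bookkeeping you flag is exactly what the paper carries out.

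For part (1) you take a genuinely different route. The paper proves $Q_E(\omega_E)=0$ \emph{directly from the definition} $\omega_E=\str\ad$, without invoking part (2): it expands $Q_E(\omega_E)(x_1\odot\cdots\odot x_k)$, recognizes the result (via the representation identity \eqref{eq:def:representation} for $\ad$) as a sum of supertraces of graded commutators $\str[l_1,\ad_{\dots}]$ and $\str[\ad^{(h)}_{\dots},\ad^{(k-h)}_{\dots}]$, and concludes by the standard fact that such supertraces vanish. You instead deduce (1) from (2) via the identity $\mathrm{div}([Q,P])=Q(\mathrm{div}P)-(-1)^{|Q||P|}P(\mathrm{div}Q)$ applied with $P=Q=Q_E$. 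This is correct and is precisely the argument the paper attributes to Bruce in the remark immediately following the proof. The trade-off is the one the paper makes explicit there: your route is shorter and more transparent (once the divergence identity is granted), but the paper's supertrace argument keeps the adjoint representation conceptually central and, as the authors note, remains valid in situations with infinitely many brackets where the coordinate divergence formalism is less comfortable.
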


\begin{proof}
Let us prove the first item. By construction, $Q_E(\omega_E) $ is a function of degree $2$.
Let us compute its component of arity $k$.
Let $x_1,\ldots, x_k\in  E$ be homogeneous elements whose degrees add up to ${-2}$, then:
\begin{align*}
& Q_E(\omega_E)(x_1\odot\ldots\odot x_k) \\ =&\!\!\!\!\!\!\!\!\!\!\sum_{\begin{array}{c}\scriptstyle{h\leq k}\\ \scriptstyle{\sigma\in Sh(h,k-h)}\end{array}} \!\!\!\!\!\!\!\!\!\!(-1)^{|\omega_E|}\varepsilon(\sigma)\,\omega^{(k)}\left(l_{h}(x_{\sigma(1)},  \ldots,  x_{\sigma(h)}) \odot x_{\sigma(h+1)} \odot \ldots \odot x_{\sigma(k)} \right)\\
=&-\!\!\!\!\!\!\!\!\!\!\sum_{\begin{array}{c}\scriptstyle{h\leq k}\\ \scriptstyle{\sigma\in Sh(h,k-h)}\end{array}}  \!\!\!\!\!\!\!\!\!\! \varepsilon(\sigma) \str\, \ad^{(k)}_{l_{h}(x_{\sigma(1)},  \ldots,  x_{\sigma(h)})  \odot x_{\sigma(h+1)}\odot \ldots \odot x_{\sigma(k)} }\\
=&
  \str \left( l_1\smalcirc \ad_{x_1\odot \ldots \odot x_k}+ \ad_{x_1\odot \ldots \odot x_k}\smalcirc l_1\right)  + \\
  & + \frac{1}{2}\!\!\!\!\!\!\!\!\!\!\sum_{\begin{array}{c} \scriptstyle{h\leq l}\\ \scriptstyle{\sigma\in Sh(h,l-h)}\end{array}}\!\!\!\!\!\!\!\!\!\! (-1)^{|x_{\sigma(1)}|+\ldots + |x_{\sigma(h)}|}\str \brr{\ad^{(h)}_{x_{\sigma(1)}\odot  \ldots \odot x_{\sigma(h)}},\ad^{(k-h)}_{x_{\sigma(h+1)}\odot  \ldots  \odot x_{\sigma(k)}}}  =0.
\end{align*}
Above, we used  in the second line, Definition \ref{def:modular}  of the modular function and in the third line the description given in Equation (\ref{eq:def:representation}) of representations. In the last line, we used the fact that the supertrace of a graded commutator vanishes.

Let us prove the second item.
Let $x_1,x_2,\ldots, x_n$ and $\xi^1,\xi^2,\ldots,\xi^n$ be dual basis of $E $ and $E^*$, made of homogeneous elements.
%In these coordinates, the homological vector field is expressed by
%$$ Q_E=  \sum_{k \in {\mathbb N}} \sum_{  {\tiny{\begin{array}{c} 1 \leq i_1,\dots,i_k \leq n \\			j=1, \dots, n \\ |\xi^{i_1}| + \dots + |\xi^{i_k}| - |\xi^{j}|= 1 \end{array}}}}\!\!\!\!\!\!\!\!\!\!  \frac{1}{k!} \, Q^{(j)}_{i_1, \dots ,i_k} \, \xi^{i_k} \odot \dots \odot \xi^{i_1} \, \, \, \frac{\partial}{\partial \xi^j}   .$$
Now, for each $k\geq 1$ and $x_{i_1}\odot\ldots \odot x_{i_k}\in S^k{(E)}_{-1}$ we have, by definition of the supertrace,
\begin{align*}\omega_E(x_{i_1}\odot\ldots \odot x_{i_k})&=\str \left( \ad^{(k)}_{x_{i_1}\odot\ldots\odot x_{i_k}} \right) \\
&= \sum_{j=1}^n (-1)^{|\xi^j|} \eval{\xi^j, l_{k+1}(x_{i_1},\ldots, x_{i_k}, x_{j})}\\
&= \sum_{j=1}^n \frac{\partial}{\partial \xi^j} Q(\xi^j)(x_{i_1}\odot\ldots \odot x_{i_k})\\
%&=\sum_{j=1}^n \frac{\partial}{\partial \xi_j}  (Q^j_{j,i_1,\ldots,i_k} \xi^j\xi^{i_k}\ldots\xi^{i_1})(x_{i_1}\odot\ldots \odot x_{i_k})\\
&=\left( \mathrm{div } \, Q_E \right) (x_{i_1}\odot\ldots \odot x_{i_k}).
\end{align*}
Remark	\ref{rem:Qandl} was used to relate the second to the third line.
This completes the proof.
\end{proof}

\begin{remark}
\normalfont
In the supermanifold case, Bruce \cite{AB}
proves an equivalent statement of Proposition \ref{prop:divergence}  by showing first that $ \mathcal L_{Q_E} \rho = \left( \mathrm{div } \, Q_E \right) \rho $, for any Berezinian form $ \rho$. He then notices that the relation $\mathcal L_{Q_E}^2=\frac{1}{2}\mathcal L_{[Q_E,Q_E]} =0 $ gives immediately $Q_E[\mathrm{div } \, Q_E]=0 $. This is an efficient method, but we prefer to go through the properties of the supertrace in order to have a proof that may remain valid if there exists infinitely many brackets, as in Example \ref{ex:Atiyah}. This seems complicated using \cite{AB} (which uses a formalism that does not immediately allow infinite sums). Also, the idea of relating this construction to the adjoint representation seems to be conceptually relevant. Last, we want to avoid Berezinians - at this point.
\end{remark}

The first item in Proposition \ref{prop:divergence} makes sense of the following

\begin{defn}
The \textbf{modular class of a Lie $\infty$-algebra} $\left( E, Q_E \equiv \set{l_k}_{k\geq 1} \right)$ of finite dimension is the cohomology class of the modular function $\omega_E$ in  $H^1(E,Q_E)$, the first degree cohomology of $E$.
\end{defn}

Let $\left( E, Q_E \equiv \set{l_k}_{k\geq 1} \right)$ and  $\left( F, Q_F \equiv \set{m_k}_{k\geq 1} \right)$ be Lie $\infty$-algebras of finite dimensions, and  $ \Phi$ a morphism of Lie $\infty$-algebras from the first one to the second one.
Let $ \Phi^* : \F \to \E$ be the corresponding morphism of Lie $\infty$-algebras as in Proposition \ref{prop:morphisms}. Since $\Phi^*$ is a chain map, $\omega_\Phi : = \omega_E - \Phi^* (\omega_F) $ is a cocycle of degree $1$.  As in \cite{KLW08}, we call its corresponding class in $H^\bullet(E,Q_E)$ the \textbf{modular class of the Lie $\infty$-algebra morphism~$\Phi$}.

\begin{prop}\label{prop:modular:class:isomprphism}
 The modular class of a Lie $\infty$-isomorphism vanishes.
\end{prop}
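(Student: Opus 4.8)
The plan is to show that for a Lie $\infty$-isomorphism $\Phi$, the modular function $\omega_\Phi = \omega_E - \Phi^*(\omega_F)$ is not merely a cocycle but actually vanishes, which is stronger than its class being trivial. The key observation is that a Lie $\infty$-isomorphism $\Phi$ has, in particular, $\Phi_1 : (E, l_1) \to (F, m_1)$ a chain isomorphism, so $E$ and $F$ are isomorphic as graded vector spaces in each degree; moreover $\Phi^*$ is a graded commutative algebra isomorphism. I would exploit the divergence description of the modular function from Proposition~\ref{prop:divergence}(2): $\omega_E = \operatorname{div}(Q_E)$ and $\omega_F = \operatorname{div}(Q_F)$, together with the intertwining relation $\Phi^* \circ Q_F = Q_E \circ \Phi^*$ from Proposition~\ref{prop:morphisms}. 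So the task reduces to a statement about how divergence of a self-commuting degree $+1$ vector field transforms under a graded algebra isomorphism intertwining the two vector fields.

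First I would set up dual homogeneous bases: pick $\xi^1,\dots,\xi^n$ for $E^*$ and $\eta^1,\dots,\eta^n$ for $F^*$ (same total count and same degrees, since $\Phi_1$ is a chain iso, though in fact one only needs a graded-algebra iso $\Phi^*$, which forces the generators to match degree by degree). Write $\Phi^*(\eta^a) = \sum_k (\Phi^*)^{(k)}(\eta^a)$ as a sum over arities, and note that the arity-$1$ component is an invertible linear map on the space of generators (invertibility of $\Phi^*$ as an algebra map forces the induced map on $E^*_{\mathrm{gen}}/(\text{higher arity})$, i.e. on generators, to be invertible). The cleanest route is then the change-of-variables formula for divergence: for a graded algebra automorphism $\Phi^*$ of $\underline{\mathcal E}$ (after identifying $\underline{\mathcal F} \cong \underline{\mathcal E}$ via $\Phi^*$) and any vector field $Q_F$, one has
\begin{equation*}
\operatorname{div}_{\mathcal E}\bigl(\Phi^* \circ Q_F \circ (\Phi^*)^{-1}\bigr) = \Phi^*\bigl(\operatorname{div}_{\mathcal F}(Q_F)\bigr) + Q_E\bigl(\log \operatorname{Ber}(\Phi^*)\bigr),
\end{equation*}
where $\operatorname{Ber}(\Phi^*)$ is the Berezinian (superdeterminant) of the Jacobian of $\Phi^*$; but since the paper wishes to avoid Berezinians at this stage, I would instead argue directly and more cheaply, as follows.

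The honest elementary argument: by Proposition~\ref{prop:divergence}(1), $\omega_\Phi = \omega_E - \Phi^*(\omega_F)$ is a degree-$1$ cocycle; to show it is zero it suffices to show its arity-$k$ component vanishes for every $k$. I would argue arity by arity using the explicit formula for the divergence in a basis, namely $\operatorname{div}(Q) = (-1)^{i+1}\sum_j \tfrac{\partial}{\partial\xi^j} Q(\xi^j)$. Since $\Phi^*$ commutes with $Q_E, Q_F$ and is an algebra isomorphism, one computes $\Phi^*(\omega_F)(x_{i_1}\odot\cdots\odot x_{i_k})$ using $\Phi^*(\operatorname{div} Q_F) = \sum_a \tfrac{\partial}{\partial\xi} \cdots$ and the chain rule for the Jacobian of $\Phi^*$; the supertrace interpretation (Definition~\ref{def:modular}) lets one recast each side as $\operatorname{str}$ of the adjoint operators, and the adjoint representations of $E$ and $F$ are intertwined by $\Phi$ up to conjugation and lower-arity correction terms whose supertrace contributions telescope. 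Concretely, the fact that the modular function equals $\operatorname{str}\operatorname{ad}$, and that conjugation and graded commutators have vanishing supertrace (exactly the fact used in the proof of Proposition~\ref{prop:divergence}(1)), forces the difference to vanish identically, not just cohomologically.

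The main obstacle I anticipate is the bookkeeping of the higher-arity components of $\Phi^*$: for a Lie algebroid (or Lie algebra) isomorphism in the classical sense $\Phi^*$ is linear and the statement is immediate (conjugation doesn't change supertrace), but a Lie $\infty$-isomorphism has nonlinear higher $\Phi_k$, so $\Phi^*$ has a nontrivial arity filtration and its Jacobian is a genuine matrix of functions rather than of scalars. Controlling that the logarithm of its Berezinian is actually $Q_E$-exact — or, in the Berezinian-free approach, that the correction terms assemble into supertraces of graded commutators — is the technical heart. I expect that the slick resolution is to observe that $\Phi^*$ automorphism means $\omega_\Phi$ must be $\operatorname{div}$ of the "$\Phi^*$-conjugate minus original" vector field, which by the self-commuting property and an induction on arity (using that the arity-$0$ pieces fix things up) collapses; alternatively, invoke the homotopy-invariance machinery developed later in the paper — but since this Proposition comes first, the self-contained supertrace argument sketched above is the intended one, and writing it out carefully (tracking the Koszul signs in the chain rule for $\Phi^*$) is the part that requires genuine care rather than routine manipulation.
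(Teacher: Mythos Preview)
Your central claim---that $\omega_\Phi = \omega_E - \Phi^*(\omega_F)$ vanishes identically---is false in general. The proposition only asserts that its \emph{class} vanishes, and the paper's proof confirms exactly this weaker statement: it exhibits an explicit primitive, namely the function $x \mapsto \str \sum_{n\geq 1} \tfrac{(-1)^n}{n}(\Phi_1^{-1}\circ\Phi)^n_x$, where $\Phi_x(y) := \Phi_{k+1}(x\odot y)$ for $x\in S^k(E)$. This primitive is nonzero as soon as $\Phi$ has nontrivial higher components $\Phi_k$, $k\geq 2$.

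You actually wrote down the correct mechanism and then discarded it. Your change-of-variables formula $\operatorname{div}(Q_E) = \Phi^*(\operatorname{div}(Q_F)) + Q_E(\log\operatorname{Ber}(\Phi^*))$ is morally right, and the $Q_E(\log\operatorname{Ber}(\Phi^*))$ term is precisely what prevents $\omega_\Phi$ from vanishing on the nose; the paper's logarithmic series is this $\log\operatorname{Ber}$, written out in the supertrace language without invoking Berezinians. The paper's argument (which it calls ``surprisingly complicated'') uses Sweedler notation to derive an identity relating $\ad^E_x$ to $\Phi_1^{-1}\ad^F_{\Phi(x)}\Phi_1$ plus correction terms built from the higher $\Phi_k$; taking supertraces of both sides and iterating leaves a $Q_E$-exact remainder, not zero. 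Your assertion that ``conjugation and graded commutators have vanishing supertrace \dots\ forces the difference to vanish identically'' conflates conjugation by the constant linear map $\Phi_1$ (supertrace invariant) with conjugation by the full nonlinear $\Phi^*$ (supertrace \emph{not} invariant---the Jacobian contributes). For a strict isomorphism ($\Phi_k=0$ for $k\geq 2$) your reasoning is fine, but that is exactly the case where the primitive above collapses to zero; the content of the proposition lies in handling the higher Taylor components.
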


\begin{proof} This proof is surprisingly complicated, and will make use of co-derivations, and Sweedler's notation, see \cite{Vallette}.
Consider  the reduced symmetric algebra $S^{\geq 1}(E)$ equipped with its usual coalgebra structure given by:
$$
\Delta(x_1\odot\ldots\odot x_n)= \sum_{\begin{array}{c} \scriptstyle{j=1}\\ \scriptstyle{\sigma\in Sh(j,n-j)}\end{array}}^{\scriptstyle{n-1}} \!\!\!\!\!\!\!\!\!\!\varepsilon(\sigma)({x_{\sigma(1)}}\odot \ldots\odot {x_{\sigma(j)}}) \otimes ({x_{\sigma(j+1)}}\odot \ldots \odot{x_{\sigma(n)}}),
$$
for $x_1,\ldots, x_n\in E$.
We will use Sweedler's notation: 
given $x \in  S^{\geq 1}(E)$,  $$\Delta^{(1)}(x)=\Delta(x)=x_{(1)}\otimes x_{(2)},$$
 and {the coassociativity yields} $$\Delta^{(n)}(x)=(\mathrm{id}\otimes\Delta^{(n-1)})\Delta(x)=x_{(1)}\otimes \ldots\otimes x_{(n+1)},\quad n\geq 2.$$ 
Notice that $\Delta^{(n)}(x)=0$, for all $x\in S^{\leq n}(E)$.

Let $\Phi:\left( E, Q_E \equiv \set{l_k}_{k\geq 1} \right)\to \left( F, Q_F \equiv \set{m_k}_{k\geq 1} \right)$ be  Lie $\infty$-isomorphism and  
  $M_E:S^{\geq 1}(E)\to S^{\geq 1}(E)$  the coderivation of $S^{\geq 1}(E)$ defined by the Lie $\infty$-structure in $E$:
$$
M_E(x)=l(x_{(1)})\odot x_{(2)}+l(x), \quad x \in  S^{\geq 1}(E).
$$

For each $x\in S^k(E)$, $k\geq 1$, let us consider the linear map
$\Phi_x:E\to F$ given by $$\Phi_x(y)=\Phi_{k+1}(x\odot y).$$

The map
$\Phi_1:(E,l_1)\to (F,m_1)$ is an isomorphism:
%and
%$$
%\sum_{n=1}^k \Phi^{-1}_n\smalcirc\Phi(x_1\odot\ldots\odot x_k)=0,\quad x_1\odot\ldots\odot x_k\in S^k(E), k\geq 2.
%$$
%Moreover
$$
 l_1=\Phi_1^{-1}\smalcirc m_1\smalcirc \Phi_1
$$
and, taking into account Definition \ref{defn:Lie:infty:morphism}, we have, for each homogeneous $x\in S^{\geq 1}(E)$:
\begin{align}\label{eq:Lie:isomorphism:adjoint}
\nonumber\ad^E_x&+\Phi_1^{-1}\smalcirc\Phi_{M_E(x)} + (-1)^{|x|}\Phi_1^{-1}\smalcirc\Phi_x\smalcirc l_1 + (-1)^{|x_{(1)}|}\Phi_1^{-1}\smalcirc \Phi_{x_{(1)}}\smalcirc \ad^E_{x_{(2)}}\\
& =\Phi_1^{-1}\ad^F_{\Phi(x)}\smalcirc \Phi_1 + \Phi^{-1}_1\smalcirc m_1\smalcirc \Phi_x + \Phi_1^{-1} \smalcirc \ad^F_{\Phi(x_{(1)})}\smalcirc \Phi_{x_{(2)}}.
\end{align}

Then, for each $x\in  S^{\geq 1}(E)_{-1}$, we have
\begin{eqnarray*}
\Phi^*\omega_F(x)&=&\omega_F(\Phi(x))=\str \ad^F_{\Phi(x)}=\str \Phi^{-1}_1\smalcirc \ad^F_{\Phi(x)} \smalcirc \Phi_1\\
&=&\str \ad^E_x+\str \Phi_1^{-1}\smalcirc\Phi_{M_E(x)}-\str \left(\Phi_1^{-1}\smalcirc\Phi_x\smalcirc l_1 + \Phi^{-1}_1\smalcirc m_1\smalcirc \Phi_x\right) \\
& &+ \str (-1)^{|x_{(1)}|}\Phi_1^{-1}\smalcirc \Phi_{x_{(1)}}\smalcirc \ad^E_{x_{(2)}}
-\str \Phi_1^{-1} \smalcirc \ad^F_{\Phi(x_{(1)})}\smalcirc \Phi_{x_{(2)}}\\
&=&\omega_E - Q_E(\str \Phi_1^{-1}\smalcirc \Phi_{x})  \\
& &+\str (-1)^{|x_{(1)}|}\Phi_1^{-1}\smalcirc \Phi_{x_{(1)}}\smalcirc \ad^E_{x_{(2)}}
-\str \Phi_1^{-1} \smalcirc \ad^F_{\Phi(x_{(1)})}\smalcirc \Phi_{x_{(2)}}.
\end{eqnarray*}

By recursively applying equation (\ref{eq:Lie:isomorphism:adjoint}) we get
\begin{align*}
\Phi^*\omega_F(x) &= \omega_E + Q_E \left(\str \sum_{n}\frac{(-1)^n}{n} (\Phi_1^{-1}\smalcirc \Phi)^n_{x} \right), 
\end{align*}

where, for each $x\in S^k(E)$, $k\geq 1$,
\begin{align*}\sum_{n=1}^k \frac{(-1)^n}{n}(\Phi_1^{-1}\smalcirc \Phi)^n_{x}=& %\left(
-\Phi_1^{-1}\smalcirc \Phi_x + \frac12 \Phi_1^{-1}\smalcirc \Phi_{x_{(1)}}\smalcirc \Phi_1^{-1}\smalcirc \Phi_{x_{(2)}}
%\right.
\\
&
%\left.
+ \ldots + \frac{(-1)^k}{k}
 \Phi_1^{-1}\smalcirc \Phi_{x_{(1)}}\smalcirc \ldots \smalcirc \Phi_1^{-1}\smalcirc \Phi_{x_{(k)}}
 %\right)
 .
\end{align*}
Therefore $\Phi^*\left[\omega_F\right]=\brr{\omega_E}.$
\end{proof}

A Lie $\infty$-algebra $\left( E, Q_E \equiv \set{l_k}_{k\geq 1} \right)$ is called \textbf{minimal} if $l_1=0$ and  is called \textbf{linear contractible} if $l_k=0$, $k\geq 2$, and $H^\bullet(E,l_1)$ is trivial.

Any Lie $\infty$-algebra is isomorphic to the direct sum of a minimal Lie $\infty$-algebra with a linear contractible one \cite{Kontsevich}. In this decomposition, the minimal Lie $\infty$-algebra is unique up to isomorphism, and a homotopy equivalence between Lie $\infty $-algebras induce an isomorphism between their minimal Lie $\infty$-algebras. 
 Since a linear contractible Lie $\infty $-algebra has a trivial modular class, the following statement follows from 
Proposition \ref{prop:modular:class:isomprphism}.
%\begin{prop} Quasi-isomorphic Lie $\infty$-algebras have the same %modular classes.
%\end{prop}

%\begin{proof} Due to the decomposition of a Lie $\infty$-algebra into the direct sum of a minimal and a linear contractible Lie $\infty$-algebras, we conclude that the modular class of a Lie $\infty$-algebra coincides with the modular class of its minimal model. Moreover,  Lie $\infty$-quasi-isomorphism induces a Lie $\infty$-quasi-isomorphism between minimal models. Since minimal models have vanishing differential we conclude that this quasi-isomorphism is, in fact, an isomorphism. Therefore quasi-isomorphic Lie $\infty$-algebras have isomorphic minimal models and, since modular classes are preserved by Lie $\infty$-isomorphisms, we conclude they have the same modular class.
%\end{proof}

\begin{prop}
	\label{prop:up_to_homotopy}
A homotopy equivalence  of Lie $\infty$-algebras intertwines their modular classes. In equation:
 $$ \Phi^* [\omega_E] = [\omega_F]  \hbox{ and } \Psi^* [\omega_F]  =  [\omega_E],$$
 for any homotopy equivalence as in Definition \ref{def:HomotopyMorphisms}.
%for any homotopy equivalence: 
%given $\left( E, Q_E \equiv \set{l_k}_{k\geq 1} \right)$ and $\left( F, Q_F \equiv \set{l_k}_{k\geq 1} \right)$ Lie $\infty $-algebras, $\Phi $ and $\Psi $ Lie $ \mathcal E$ Lie $\infty $-algebra morphisms such that $\Phi \circ \Psi $ and $\Psi \circ \Phi $ are homotopic to the identity morphism, we have:
%In particular, if a Lie $\infty$-algebra morphism is invertible up to homotopy, then  its modular class is zero.
\end{prop}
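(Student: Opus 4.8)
The strategy is to reduce the statement to Proposition~\ref{prop:modular:class:isomprphism} by invoking the Kontsevich-type decomposition recalled just above: every Lie $\infty$-algebra $\left( E, Q_E \equiv \set{l_k}_{k\geq 1} \right)$ is isomorphic to $E^{\mathrm{min}} \oplus E^{\mathrm{ctr}}$, where $E^{\mathrm{min}}$ is minimal and $E^{\mathrm{ctr}}$ is linear contractible, and a homotopy equivalence $\Phi \colon E \to F$ induces a genuine Lie $\infty$-isomorphism $\Phi^{\mathrm{min}} \colon E^{\mathrm{min}} \to F^{\mathrm{min}}$ between the minimal parts. So the only things to check are: (a) that the modular function is additive under direct sums, $\omega_{E^{\mathrm{min}} \oplus E^{\mathrm{ctr}}} = \omega_{E^{\mathrm{min}}} \oplus \omega_{E^{\mathrm{ctr}}}$ under the obvious identification $\mathcal E_{E^{\mathrm{min}} \oplus E^{\mathrm{ctr}}} \supset \mathcal E_{E^{\mathrm{min}}} \oplus \mathcal E_{E^{\mathrm{ctr}}}$; (b) that $[\omega_{E^{\mathrm{ctr}}}] = 0$ in $H^1(E^{\mathrm{ctr}}, Q_{E^{\mathrm{ctr}}})$; and (c) that the isomorphisms $E \cong E^{\mathrm{min}} \oplus E^{\mathrm{ctr}}$ and $F \cong F^{\mathrm{min}} \oplus F^{\mathrm{ctr}}$ are compatible with $\Phi$ and $\Phi^{\mathrm{min}}$ up to homotopy, so that applying Proposition~\ref{prop:modular:class:isomprphism} to $\Phi^{\mathrm{min}}$ and to the two decomposition isomorphisms, together with the fact that homotopic morphisms act identically on cohomology, yields $\Phi^*[\omega_E] = [\omega_F]$; the statement for $\Psi$ is symmetric.

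\textbf{Carrying out (a) and (b).} Point (a) is immediate from Definition~\ref{def:modular}: for a direct sum, the adjoint endomorphism $\ad^{(k)}_{x_1 \odot \dots \odot x_k}$ is block-diagonal (brackets mixing the two factors vanish), so its supertrace splits as a sum of the two supertraces; in particular $\omega^{(k)}$ on a pure tensor from one factor only sees that factor's brackets. Point (b) is where I would be slightly careful: a linear contractible Lie $\infty$-algebra has $l_k = 0$ for $k \geq 2$, so $\ad^{(k)} = 0$ for $k \geq 1$ and hence $\omega_{E^{\mathrm{ctr}}} = 0$ as a \emph{function}, not merely as a class — so $[\omega_{E^{\mathrm{ctr}}}] = 0$ trivially. (This is already asserted in the paragraph preceding the proposition; I would just make it explicit from the definition rather than leave it implicit.)

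\textbf{Carrying out (c).} Here is the real content. Write $\alpha_E \colon E \xrightarrow{\sim} E^{\mathrm{min}} \oplus E^{\mathrm{ctr}}$ and $\alpha_F \colon F \xrightarrow{\sim} F^{\mathrm{min}} \oplus F^{\mathrm{ctr}}$ for the decomposition isomorphisms, and let $\iota_E \colon E^{\mathrm{min}} \hookrightarrow E^{\mathrm{min}} \oplus E^{\mathrm{ctr}}$, $\pi_E \colon E^{\mathrm{min}} \oplus E^{\mathrm{ctr}} \twoheadrightarrow E^{\mathrm{min}}$ be the (Lie $\infty$-morphism) inclusion and projection, so that $\pi_E \circ \iota_E = \mathrm{id}$ and $\iota_E \circ \pi_E$ is homotopic to $\mathrm{id}$ (the contractible summand being killed by a homotopy). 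The minimal part $\Phi^{\mathrm{min}}$ is by construction the morphism making
\begin{equation*}
\pi_F \circ \alpha_F \circ \Phi \simeq \Phi^{\mathrm{min}} \circ \pi_E \circ \alpha_E
\end{equation*}
hold up to homotopy. Now compute in $H^1$: dualizing, $\Phi^* [\omega_E] = \Phi^* \alpha_E^* \pi_E^* \iota_E^* [\omega_E]$ because $(\iota_E \circ \pi_E)^*$ acts as the identity on cohomology (homotopic to $\mathrm{id}^*$); by (a)+(b), $\iota_E^* \omega_{E^{\mathrm{min}} \oplus E^{\mathrm{ctr}}}$ represents $[\omega_{E^{\mathrm{min}}}]$ and by Proposition~\ref{prop:modular:class:isomprphism} applied to $\alpha_E$ we get $\alpha_E^*[\omega_{E^{\mathrm{min}} \oplus E^{\mathrm{ctr}}}] = [\omega_E]$, whence $[\omega_E] = \alpha_E^* \pi_E^* [\omega_{E^{\mathrm{min}}}]$. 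Then
\begin{equation*}
\Phi^* [\omega_E] = \Phi^* \alpha_E^* \pi_E^* [\omega_{E^{\mathrm{min}}}] = \pi_E^* \alpha_E^* \Phi^* \dots
\end{equation*}
— more cleanly: $[\omega_E] = (\pi_E \circ \alpha_E)^* [\omega_{E^{\mathrm{min}}}]$ and $[\omega_F] = (\pi_F \circ \alpha_F)^* [\omega_{F^{\mathrm{min}}}]$, and $(\Phi^{\mathrm{min}})^* [\omega_{E^{\mathrm{min}}}] = [\omega_{F^{\mathrm{min}}}]$ by Proposition~\ref{prop:modular:class:isomprphism} since $\Phi^{\mathrm{min}}$ is an isomorphism; combining with the homotopy square above (which gives $(\pi_E \circ \alpha_E)^* \circ (\Phi^{\mathrm{min}})^* = \Phi^* \circ (\pi_F \circ \alpha_F)^*$ on cohomology) yields $\Phi^* [\omega_F] = [\omega_E]$, and running the argument with $\Psi$ in place of $\Phi$ gives $\Psi^* [\omega_F] = [\omega_E]$, wait — one should be careful about directions, but the symmetry of homotopy equivalence handles this. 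The main obstacle is bookkeeping the homotopy square correctly at the cochain/cohomology level — i.e., making sure that "homotopic morphisms induce equal maps in cohomology" (recalled in the text after Definition~\ref{def:HomotopyMorphisms}) is applied to the right composites, and that the decomposition $\alpha_E$ and the splitting $\Phi^{\mathrm{min}}$ genuinely fit into such a square. Once that diagram is set up, everything else is formal.
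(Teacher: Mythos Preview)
Your approach is essentially the same as the paper's: decompose $E$ and $F$ as minimal $\oplus$ linear contractible, observe the linear contractible summand has vanishing modular function, and apply Proposition~\ref{prop:modular:class:isomprphism} to the induced isomorphism of minimal models and to the decomposition isomorphisms. The paper states this in three sentences without spelling out your points (a), (b), (c); your expanded version is correct, and your hesitation about directions in (c) is only a notational matter (in the paper's convention $\Phi^*$ is the algebra map $\mathcal E \to \mathcal F$, so $\Phi^*[\omega_E]$ lives in $H^1(F,Q_F)$) that does not affect the argument.
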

%\begin{proof}
%	The map induced at the cohomology level by two  homotopic 	Lie $\infty$-algebra  equal, see, e.g. Proposition 3.57 in \cite{LLS}%, which proves the first assertion.
%	The second assertion in Proposition \ref{prop:up_to_homotopy} therefore follows from the first one and  Proposition \ref{prop:modular:class:isomprphism}.
%\end{proof}

Let $\left( E, Q_E \equiv \set{l_k}_{k\geq 1} \right)$ be a Lie $\infty$-algebra of finite dimension. Recall from Proposition {\ref{prop:cohom}} that the cohomology $H^\bullet (E,l_1)$ of the complex $(E,l_1)$ comes equipped with a natural graded Lie algebra structure. It admits therefore also a modular function, which, by construction, is of arity one, which, for degree reasons,  is an element in $(H^{-1} (E,l_1))^*$.

\begin{prop}
\label{prop:NegativeconcentrateModular}
Let $\left( E, Q_E \equiv \set{l_k}_{k\geq 1} \right)$ be a
Lie $\infty$-algebra of finite dimension.
The modular function $\omega_E$ of $E$ and the modular function  $\omega_{H(E,l_1)}$ of the graded Lie algebra $H^\bullet(E,l_1)$ are related by:
\begin{equation*}
\omega_{H^\bullet(E,l_1)}\left(\brr{x}\right)=\omega_E(x),
\end{equation*}
for all  $ x \in E_{-1} $ with $l_1(x)=0$.
\end{prop}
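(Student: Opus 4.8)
The plan is to compare the modular function $\omega_E$ with $\omega_{H^\bullet(E,l_1)}$ directly on a representative cycle, using the explicit formula $\omega_E(x) = \str\, \ad^{(1)}_{x}$ for $x \in E_{-1}$. Since $\omega_E = \sum_{k\geq 1}\omega^{(k)}$ with $\omega^{(k)}$ of arity $k$, and since evaluation at a single element $x \in E_{-1}$ only picks up the arity-one component, we have $\omega_E(x) = \omega^{(1)}(x) = \str\, \ad^{(1)}_x = \str\, l_2(x, -)$, where the supertrace is the alternating sum over the degrees $i$ of the traces of $l_2(x,-)\colon E_i \to E_i$ (note $l_2(x,-)$ has degree $0$ since $l_2$ has degree $+1$ and $|x|=-1$). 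Similarly, $\omega_{H^\bullet(E,l_1)}([x])$ is computed from the arity-one bracket on cohomology, which by Proposition~\ref{prop:cohom} and item (ii) following Proposition~\ref{prop:cohom} is induced by $l_2$; so $\omega_{H^\bullet(E,l_1)}([x]) = \str_H\, (l_2([x],-))$, the alternating sum over $i$ of the traces of the induced map $l_2(x,-)\colon H^i(E,l_1) \to H^i(E,l_1)$.

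So the statement reduces to the following purely homological-algebra fact: given a cochain complex $(E,l_1)$ of finite-dimensional graded vector spaces and a degree-zero chain map $T\colon E \to E$ (here $T = l_2(x,-)$, which is a chain map precisely because of relation (ii) following Proposition~\ref{prop:cohom}, using $l_1(x)=0$), the supertrace of $T$ on $E$ equals the supertrace of the induced map $T_*$ on $H^\bullet(E,l_1)$. First I would verify that $T = l_2(x,-)$ is indeed a chain map: the $n=2$ identity gives $l_1(l_2(x,y)) + l_2(l_1(x),y) + (-1)^{|x|}l_2(x,l_1(y)) = 0$, and with $l_1(x)=0$ and $|x|=-1$ this becomes $l_1 \circ T = T \circ l_1$ (up to the sign $(-1)^{|x|}=-1$, which is exactly the sign making $T$ a degree-zero chain map under the Koszul convention — I would be careful to track this sign consistently with the sign convention used to define $\str$ and the induced bracket on $H^\bullet$). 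Then I would invoke the Hopf-trace / Euler-characteristic-with-coefficients principle: for each degree $i$, write $E_i = B_i \oplus \mathcal{H}_i \oplus C_i$ where $B_i = \im(l_1\colon E_{i-1}\to E_i)$, $\mathcal{H}_i$ is a complement of $B_i$ in $\ker(l_1|_{E_i})$ mapping isomorphically to $H^i$, and $C_i$ a complement of $\ker l_1$ in $E_i$; $T$ preserves $\ker l_1$ and $B$, hence is block-upper-triangular, so $\tr(T|_{E_i}) = \tr(T|_{B_i}) + \tr(T_*|_{H^i}) + \tr(\bar T|_{E_i/\ker l_1})$. Since $l_1$ intertwines the action of $T$ on $E_{i-1}/\ker l_1$ with its action on $B_i$, these two traces are equal, and taking the alternating sum over $i$ they cancel telescopically, leaving $\str(T) = \str(T_*)$.

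The main obstacle I anticipate is not conceptual but bookkeeping: getting the signs right. Specifically, (a) confirming that $l_2(x,-)$ for $x\in E_{-1}$ genuinely commutes (not anticommutes) with $l_1$ after absorbing the décalage/Koszul signs, so that "chain map" is the correct statement and $T_*$ is well defined on $H^\bullet$; (b) checking that the bracket on $H^\bullet(E,l_1)$ induced in item (ii) uses the same normalization, so that $\ad^{(1)}$ for the graded Lie algebra $H^\bullet(E,l_1)$ evaluated at $[x]$ is literally $l_2(x,-)_*$ and not a rescaled or sign-twisted version; and (c) ensuring the definition of $\str$ (alternating sum of traces over the grading of $E$, resp. of $H^\bullet$) is applied with matching degree conventions on both sides. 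Once these are pinned down, the algebraic heart of the argument is the elementary telescoping cancellation above, and the proof is short. I would present it by first stating and proving the lemma "supertrace of a chain endomorphism equals supertrace of its induced map on cohomology," then applying it with $T = l_2(x,-)$.
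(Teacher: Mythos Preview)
Your proposal is correct and follows essentially the same route as the paper: both reduce the claim to the Hopf-trace lemma that the supertrace of a degree-zero chain endomorphism of a finite complex equals the supertrace of the induced map on cohomology, applied to $T=l_2(x,-)$. The paper simply asserts this lemma in one line, whereas you spell out the telescoping argument and the sign checks; your extra care is harmless (and arguably an improvement), but there is no genuine difference in strategy.
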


\begin{proof} %Since $(E,Q_E)$ is negatively graded,
	For every graded Lie algebra, the modular function is the supertrace of the adjoint action on elements of degree $-1$. 
%	The same holds for a Lie $\infty $-algebra concentrated in degree $ -1$: only the $2$-ary bracket with an element of degree $-1$ can contribute to the modular function. 
	
	In particular,
	the modular function of  the graded Lie algebra $H^\bullet(E,l_1)$ is
	$$ \omega_{H^\bullet(E,l_1)}\left(\brr{x}\right)=\str \left( \ad_{\brr{x}}^{(2)}\right),\quad \brr{x}\in H^{-1}(E,l_1).$$
	Now, for a complex $(E,l_1)$ of finite dimension, the supertrace of a chain map is equal to the supertrace of the map it induces in cohomology on $H^\bullet (E,l_1)$.
	Hence, for all  $ x \in E_{-1} $ with $l_1(x)=0$.
	$$ \str \left( \ad_{\brr{x}}^{(2)} \right)=\str \left( \ad_x^{(2)} \right).$$
This completes the proof.
\end{proof}

\begin{remark}\normalfont
	Proposition \ref{prop:up_to_homotopy} allows to make sense of  Lie $\infty$-algebras
	of infinite dimension which are homotopy equivalent to one of finite dimension. By the homotopy transfer theorem, these are exactly Lie $\infty$-algebras $(E, \set{l_k}_{k\geq 1} )$
	such that the cohomology of the complex $(E,l_1)$ has finite dimension.
\end{remark}

%Let $(E,Q_E)$, $(F,Q_F)$ be Lie $\infty$-algebras of finite dimension.
%Any Lie $\infty$-morphism  $ \Phi:(E,Q_E)\rightarrow (F,Q_F)$ induces a map at the cohomology level  $$\Phi^* : H^*_{Q_F}({\mathcal F}) \to  H^*_{Q_E}({\mathcal E}).$$

%\begin{proof} Since $E$ (and $F$) is negatively graded, $S(E)_{-1}=E_{-1}$ and the modular function  is defined by its modular functions in cohomology. On the other hand, for a  Lie $\infty$-morphism $\Phi$ invertible up to homotopy,
% the  chain map $\Phi_1:(E,l_1)\to (F,l_1')$ induces  a Lie algebra isomorphism in cohomologies and it is known that Lie algebras isomorphisms preserve modular classes \cite{KLW08}.
%Therefore, we conclude $\Phi^*\omega_F=\Phi_1^*\omega_F=\omega_E$.
%\end{proof}

\begin{ex}
For any representation  $\Phi: S(E)\to  \End(V)[1]$ of a Lie $\infty$-algebra $(E,Q_E\equiv\set{l_k}_{k\geq 1})$ on the complex $(V,d)$, 
consider the degree $+1 $ function $\omega_\Phi \in \mathcal E_{+1} $ given by  $$ \omega_\Phi(x_1, \dots, x_k )= \str \Phi_k (x_1,\ldots, x_k) ,$$ for each homogeneous
$ x_1, \dots, x_k \in E$, $k\geq 1$, whose degrees add up to $-1$. This function is $ Q_E$-closed, hence define a class in $H^\bullet(E,Q_E)$, that we call the  \textbf{characteristic class of the representation $\Phi$}.
Now, the direct sum $E\oplus V$ comes equipped with the semi-direct product Lie $\infty$-algebra structure (see Remark \ref{rem:PhikFOrRepresentation}). We leave it to the reader to check that the characteristic class of the representation $\Phi$ is the modular class of the inclusion $ E \hookrightarrow E \oplus V$.
For the adjoint representation, we recover the modular function of Definition  \ref{def:modular}.
%
%
%
%The projection map $p:E\oplus V\to E$ is a strict Lie $\infty$-morphism.
%We call the \textbf{characteristic class of the representation $\Phi$} to the cohomology class of the projection map $\brr{\omega_\Phi}= \brr{\omega_{E\oplus V}}-p^*\brr{\omega_E}\in H^1(E\oplus V)$.
%
%
%$(x_1+v_1)\odot\ldots\odot (x_n+v_n)\in S^n(E\oplus V)_{-1} $.
\end{ex}

\begin{ex}
\label{ex:negGraded}
For Lie $\infty $-algebras concentrated in degree less or equal to $ -1$ (i.e. for "negatively graded Lie $\infty$-algebras"), only the $2$-ary bracket with an element of degree $-1$ can contribute to the modular function. In view of Proposition \ref{prop:NegativeconcentrateModular}, we see that its modular function is entirely determined by the one of its associated symmetric GLA $H^\bullet(E,l_1) $, although, in general, a Lie $\infty $-algebra is not homotopy equivalent to its associated GLA $H^\bullet(E,l_1) $. 
\end{ex}

\begin{ex}
Consider the following Lie $\infty $-algebra: $E_{-2}=\mathbb R, E_0= \mathbb R,E_{1}= \mathbb R $ and $ E_i=0$ otherwise. All brackets are $0$ except the $ 3$-ary brackets of the generators $e_{-2}\in E_{-2}$, $e_0\in E_0$ and $e_1\in E_1$, for which 
 $$  \{ e_{-2},e_1, e_0 \}_3= e_0 .$$
The modular function is defined on generators by
 $$ \omega ( e_{-2}, e_1) =1  $$ 
 and is zero otherwise. It is, in particular, non-zero. 
 This example shows that the $3$-ary bracket may contribute to the modular function, in contrast with the situation of Example \ref{ex:negGraded}.
\end{ex}

\begin{ex}[Lie algebra actions]
\label{ex:Atiyah}\normalfont
	Let $G$ be a Lie group with Lie algebra $ {\mathfrak g}$ acting on a vector space $V$.
	We do not assume that the action is linear, but we assume the origin $O$ to be a fixed point of this action, as in \cite{LSX}, Section 4.3. The Lie algebra ${\mathfrak g}$ then acts
	by derivation on the infinite-jet $\widehat{S}(V)$ of functions at $0$. The Chevalley-Eilenberg differential of this action is a derivation $Q$ of degree $+1$ squaring to zero of 
	  $ \wedge^\bullet {\mathfrak g}^* \otimes \widehat{S}(V) $.
	  Explicitly:
	   $$ Q = \sum_{i,j,k= 1}^n \Gamma_{ij}^k \, \epsilon^i \wedge \epsilon^j \, \frac{\partial}{\partial \epsilon_k}   + \sum_{i=1}^n \epsilon^i \otimes X_i^{(\infty)} $$
	   where $ (\epsilon_i)_{i=1, \dots,n}$ is a basis of $ {\mathfrak g}^*$, $ (e_i)_{i=1, \dots,n} $ is the dual basis and $X_i^{(\infty)}$
	    is the infinite jet at $m$ of the formal vector field (that is to say, an element in $\widehat{S}(V^*) \otimes V$) associated to the infinitesimal action of $e_i$ on $V$.
	
	  By Proposition \ref{repres:1to1}, it therefore corresponds to a Lie $\infty$-algebra structure concentrated in degrees $-1$ and $0$ on ${\mathfrak g}|_{-1} \oplus V|_0$.
	  According to Theorem \ref{prop:divergence}, the modular function of this Lie $\infty$-algebra is:
	   \begin{eqnarray*} \omega_E &=& \sum_{i=1}^n \sum_{j=1}^n \Gamma_{ij}^j \epsilon^i  +  \sum_{i=1}^n {\rm div} (X_i^{(k)})   \, \epsilon_i \\ &=& \omega_{\mathfrak g} + \sum_{i=1}^n {\rm div} (X_i^{(\infty)}) \, \epsilon_i \end{eqnarray*}
	   where $\omega_{\mathfrak g} \in {\mathfrak g}^*$ is the modular function of the Lie algebra $ {\mathfrak g}$, and where  ${\rm div} (X_i^{(\infty)})$ is the divergence of the infinite jet of the vector field $X_i$.
	   This class is zero if $\mathfrak g $ is a unimodular Lie algebra, and if there exists an infinite jet of a volume form on $M$ formally preserved by the $\mathfrak g$-action. 
\end{ex}

%%%%%%%%%%%%%%%%%%%%%%%%%%%%%%%%%%%%%%%%%%%%%%%%%%%% L_{\infty} ALGEBROID %%%%%%%%%%%%%%%%%%%%%%%%%%%%%%%%%%%%%%%%%%%%%%%%%%%%%%%%%
\section{Lie ${\infty}$-algebroid}\label{LieInfty}
%%%%%%%%%%%%%%%%%%%%%%%%%%%%%%%%%%%%%%%%%%%%%%%%%%%%%%%%%%%%%%%%%%%%%%%%%%%%%%%%%%%%%%%%%%%%%%%%%%%%%%%%%%%%%%%%%%%%%%%%%%%%%%%%%%%

\subsection{Lie $\infty$-algebroids and their cohomologies}

In what follows,  $M$ is a manifold whose sheaf of functions we denote by $\mathcal{O}$.

\begin{remark}
\normalfont
Unless otherwise specified, $M$ can be a complex or a smooth manifold. It can also be an affine variety, with $\mathcal O$ then being the sheaf of  its regular functions.
\end{remark}

We warn the reader not to confuse $\mathbb Z_2 $-grading used by Bruce \cite{AB} or in several works by Khudaverdian and Voronov \cite{KV1,KV2,KV3,V07} and the $\mathbb Z $-grading we are using here.

\begin{defn} \cite{LM,V05} A \textbf{negatively graded Lie $\infty$-algebroid} is a collection of vector bundles $E=\oplus_{i\geq 1} E_{-i}$ over $M$ equipped with a sheaf of Lie $\infty$-algebra structures over the sheaf of sections and a vector bundle morphism
$\rho : E_{-1} \to TM$, called the {\textbf{anchor}} of $E$, such that the brackets $l_k=\set{-\, , \ldots,\,-}_k$ are all $\mathcal{O}$-linear in each argument except if $k=2$ and at least one of its two entries has degree $-1$. In the latter case, however:
\begin{equation*} \set{x, fy}_2 = f\set{x, y}_2 + \rho(x)[f] y,\quad   x\in \Gamma( E_{-1} ), y  \in\Gamma(E).
\end{equation*}
%When $E_{-i}=0$, for all $i> n$, we call $E$ a \textbf{Lie $n$-algebroid}.
\end{defn}

It follows from the definition that $\ds \rho\smalcirc l_1(x)=0$   and that $\rho ( \{y,z\}_2) = [\rho(y),\rho(z)]$ for all $ x \in \Gamma(E_{-2})$ and $ y,z \in \Gamma(E_{-1})$.

\begin{ex}
	When $M$ is a point, we recover negatively graded Lie $\infty$-algebras studied in the previous section. When $E_{-i}=0$ for $i \neq 1$, we recover Lie algebroids.
 \end{ex}

%Proposition \ref{prop:dual2} admits an equivalent statement in the context of Lie $\infty$-algebroids.

\begin{prop}\label{prop:dual2}\cite{LM,V05}  Let $E=\oplus_{i \geq 1}E_{-i}$ be a graded vector bundle over a manifold $M$
	and let  ${\mathcal E}$ be the sheaf of sections of the graded symmetric algebra bundle $ S(E^*)$ (with the understanding that $E_{-i}^*$ is considered to be of degree $+i$).
	There is a one-to-one correspondence between:
	\begin{enumerate}[(i)]
		\item negatively graded Lie $\infty$-algebroid brackets and anchor $((l_k)_{k \geq 1}, \rho)$ on the graded vector bundle $E$,
		\item degree $+1$ vector fields $Q$ on the graded manifold $E$ (= degree $+1$ derivations of $ {\mathcal E}$) squaring to $0$.
	\end{enumerate}
\end{prop}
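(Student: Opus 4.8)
The plan is to reduce Proposition \ref{prop:dual2} to its algebraic analogue, Proposition \ref{prop:dual}, by working fibrewise or, better, locally over coordinate charts of $M$, and then patching. First I would fix the identification: since $E = \oplus_{i\geq 1} E_{-i}$ is a graded vector bundle, the graded symmetric algebra bundle $S(E^*)$ has a sheaf of sections $\mathcal E$ which is a sheaf of graded commutative $\mathcal O$-algebras, and a degree $+1$ vector field on the graded manifold $E$ means precisely a degree $+1$ derivation $Q$ of $\mathcal E$ (as a sheaf of $\Kk$-algebras, \emph{not} as an $\mathcal O$-algebra — $Q$ need not be $\mathcal O$-linear, and its failure to be so is exactly what encodes the anchor). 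The key structural observation is that any degree $+1$ derivation $Q$ decomposes by arity as $Q = \sum_{k\geq 0} Q^{(k)}$ with $Q^{(k)} \colon \mathcal E^\bullet \to \mathcal E^{\bullet+k}$, and $Q^{(k)}$ for $k\geq 1$ is automatically $\mathcal O$-linear (it raises arity, so it cannot differentiate functions), while $Q^{(0)}$ is a derivation that preserves arity; writing $Q^{(0)}$ on the arity-one part $\Gamma(E^*)$ gives a first-order differential operator whose symbol is a bundle map $TM^* \to E_{-1}^*$, i.e. dually the anchor $\rho\colon E_{-1}\to TM$, and whose order-zero part together with the $Q^{(k)}$, $k\geq 1$, assemble into the family of brackets exactly as in the point case.

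The main steps, in order, would be: (1) Given $((l_k)_{k\geq1},\rho)$, define $Q$ locally by the same formula as in Remark \ref{rem:Qandl} applied to a local frame of $E$, with the anchor entering through the term $\xi^i \otimes \rho(e_i)$ acting on $\mathcal O$; check that the derivation property, degree, and the arity decomposition hold, and that the defining condition $Q^2=0$ is equivalent to the higher Jacobi identities \eqref{eq:def:symm:L:infty:algebra} together with the anchor compatibilities $\rho\circ l_1 = 0$ and $\rho(\{y,z\}_2)=[\rho(y),\rho(z)]$ — this is a bracket-by-bracket arity count, identical in spirit to the point case but with the extra bookkeeping that $Q^{(0)}$ now has a non-$\mathcal O$-linear piece. (2) Conversely, given $Q$ with $Q^2=0$, extract $\rho$ from the symbol of $Q^{(0)}$ on $\Gamma(E_{-1}^*)$ and the brackets via the iterated-bracket formula $[[\cdots[Q,i_{x_1}],\dots],i_{x_k}]^{(-1)} = i_{(-1)^{k+1}l_k(x_1,\dots,x_k)}$ (which makes sense locally and is $\mathcal O$-linear in all but the noted case), verify the Leibniz rule $\{x,fy\}_2 = f\{x,y\}_2 + \rho(x)[f]\,y$ comes out of the non-$\mathcal O$-linearity of $Q^{(0)}$, and verify $\mathcal O$-linearity of all other brackets from the fact that $Q^{(k)}$, $k\geq1$, raise arity. (3) Check that both constructions are inverse to each other (immediate from the coordinate formulas) and that they are independent of the chosen local frame, hence glue to a global statement over $M$ — the transition functions of $E$ act compatibly on both sides since everything is built naturally from the bundle data.

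The hard part will be step (1)--(2) \emph{only} insofar as one must carefully isolate the non-$\mathcal O$-linear behaviour of $Q^{(0)}$: in the point case every $Q^{(k)}$, including $Q^{(0)}$, is $\Kk$-linear and there is no function sheaf to worry about, whereas here $Q^{(0)}$ is a genuine first-order differential operator on $\mathcal E$ and one has to argue that its symbol is a well-defined bundle map (the anchor) and that this is the \emph{only} place where $\mathcal O$-linearity fails. I expect the cleanest way to phrase this is: a degree $+1$ derivation $Q$ of $\mathcal E$ restricts on $\Gamma(E_{-1}^*) = \mathcal E_1^1$ to a map into $\mathcal E_2 = \mathcal O \oplus \mathcal E_2^2 \oplus \cdots$ (the $\mathcal O$-summand being $\mathcal E_2^0$, which in negative grading is nonzero only because $E_{-2}^*$ has degree $2$; one must be slightly careful here about whether the degree-$2$ arity-$0$ part, i.e. functions of degree $2$, can appear — for $M$ an ordinary manifold $\mathcal O$ sits in degree $0$ only, so in fact $Q(\Gamma(E_{-1}^*))$ has no $\mathcal O$-component and the anchor shows up instead through $Q$ acting on degree-$0$ functions $f\in\mathcal O$, landing in $\Gamma(E_{-1}^*)$, i.e. as a map $\mathcal O \to \Gamma(E_{-1}^*)$ that is a derivation into the $\mathcal O$-module $\Gamma(E_{-1}^*)$, hence given by a bundle map $\rho\colon E_{-1}\to TM$). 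Once that identification is pinned down, the remainder is the same homological-algebra bookkeeping as Proposition \ref{prop:dual}, done over $\mathcal O$ rather than over $\Kk$, and I would present it by saying that the proof proceeds exactly as in the point case upon replacing $\Kk$ by $\mathcal O$ and adjoining the anchor term, referring to \cite{LM,V05} for the details.
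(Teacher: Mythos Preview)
Your overall strategy is the same as the paper's: decompose $Q$ by arity, extract the anchor and the brackets via the iterated commutators $[\cdots[[Q,i_{e_1}],i_{e_2}],\dots,i_{e_k}]$, and then observe that $Q^2=0$ is equivalent to the higher Jacobi identities together with the anchor conditions. The paper does this directly with global sections, so your ``work locally and glue'' step is harmless but unnecessary.

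There is, however, a concrete bookkeeping error in how you locate the anchor. Your claim that ``$Q^{(k)}$ for $k\geq 1$ is automatically $\mathcal O$-linear (it raises arity, so it cannot differentiate functions)'' is false for $k=1$. Since $\mathcal O$ sits in degree $0$ and $E$ is negatively graded, $Q(f)$ for $f\in\mathcal O$ has degree $+1$ and hence lies in $\Gamma(E_{-1}^*)$, which has arity~$1$; thus $Q(f)=Q^{(1)}(f)$, not $Q^{(0)}(f)$. Consequently it is $Q^{(1)}$ that fails to be $\mathcal O$-linear: for $\alpha\in\Gamma(E^*)$ one has $Q^{(1)}(f\alpha)=Q^{(1)}(f)\cdot\alpha+fQ^{(1)}(\alpha)$, and this is exactly the source of the Leibniz rule $\{x,fy\}_2=f\{x,y\}_2+\rho(x)[f]\,y$. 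Meanwhile $Q^{(0)}$ is $\mathcal O$-linear and encodes $l_1$, and $Q^{(k)}$ for $k\geq 2$ is $\mathcal O$-linear and encodes $l_{k+1}$. The paper records precisely these formulas: $\langle Q^{(0)}\alpha,e\rangle=(-1)^{|\alpha|}\langle\alpha,l_1(e)\rangle$, $\langle Q^{(1)}(f),e\rangle=-\rho(e)(f)$, and $\langle Q^{(1)}\alpha,e_1\odot e_2\rangle$ involves both $l_2$ and anchor terms. Your own parenthetical near the end (``the anchor shows up instead through $Q$ acting on degree-$0$ functions $f\in\mathcal O$, landing in $\Gamma(E_{-1}^*)$'') is the correct statement; the earlier placement of the anchor in the symbol of $Q^{(0)}$ should simply be replaced by it.
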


In the proposition above, the sheaf $ {\mathcal E}$ will be referred to as \emph{functions on $E$}. Again,
sections  of $E^*_{-i_1}\otimes\ldots\otimes E^*_{-i_k}$ are said to be functions of \textbf{degree} $i_1+\ldots+i_k$ and \textbf{arity} $k$.
We denote by $\mathcal{E}_i$, the space of functions of degree $i$ and by $\E^{k}$ the space of functions of arity $k$, as in Section \ref{sec:arity}.
We denote by ${\mathfrak X} (E) $ the graded Lie algebra of derivations of $\mathcal{E} $, that shall be also referred to as {\textbf{vector fields}} on $E$.

We say a map $\Psi: \mathcal{E}\to \mathcal{E}$ has  arity $k$ if $\Psi(\E^\bullet)\subset \E^{\bullet+k}$. Also we say it has degree $p$ if
	$\Psi(\mathcal{E}_{\bullet})\subset \mathcal{E}_{\bullet+p}$.

The pair $(\mathcal E, Q) $ in item \emph{(ii)} of Proposition \ref{prop:dual2} is often referred to as a {\textbf{$Q$-manifold with a trivialization}}. Proposition \ref{prop:dual2} allows the following convention:

\begin{convention}
From now on, we shall say simply 
``Lie $\infty$-algebroids" for  ``negatively graded Lie $\infty$-algebroids".
	Also, Lie $\infty$-algebroids shall, from now, be denoted as 
	$$(E,Q \equiv (\set{l_k}_{k \geq 1}, \rho)),
	$$ where $E = (E_{-i})_{i \geq 1}$ is a graded vector bundle over $M$, $\set{l_k}_{k \geq 1} $ are the Lie $\infty$-brackets, $\rho $ is the anchor map and $Q : {\mathcal E} \to {\mathcal E}$ is
	the degree $+1$ derivation - related as in items (i) and (ii) in Proposition \ref{prop:dual2}.
\end{convention}

%{\tiny{
%\begin{prop}
% Let $(E, \set{l_k}_{k\geq 1},\rho)$ be a Lie $\infty$-algebroid. The bracket $l_1$ is a differential  on $\Gamma(E)$.
%The bracket $l_2$ and the anchor $\rho$ define a structure of (GENERALIZED???)graded Lie algebroid  on $H^\bullet(E,l_1)$.
%\end{prop}
%}}

Here are two important notions:

\begin{defn}
	Let $(E,Q\equiv(\set{l_k}_{k\geq 1},\rho))$ be a Lie $\infty$--algebroid over $M$. The \textbf{linear part}  $(E\oplus TM, l_1,- \rho)$  is the complex of vector bundles over $M$ defined by
	$$
	\ldots \xrightarrow{l_1} E_{-n} \xrightarrow{l_1} \ldots \rightarrow E_{-2} \xrightarrow{l_1} E_{-1} \xrightarrow{-\rho} TM.
	$$
%ZZZ	\comm{maybe here the $\rho$ should come with a minus sign}
\end{defn}

In addition to the cohomology of the linear part,  the following cohomology is also important \cite{Kotov,Salnikov}:

 \begin{defn}
 	The \textbf{cohomology of a Lie $\infty$-algebroid} $(E,Q\equiv(\set{l_k}_{k\geq 1},\rho))$ is the cohomology $H^\bullet(E,Q)$ of the graded commutative algebra $ {\mathcal E}$ with respect to the differential $Q$.
 \end{defn}

 \begin{ex}
 	When $M$ is a point, we recover the cohomology of Lie $\infty$-algebras studied in the previous section. When $E_{-i}=0$ for $i \neq 1$, we recover the usual Lie algebroid cohomology (see Chapter IV in Kirill Mackenzie \cite{Kirill05}).
 \end{ex}

Proposition \ref{prop:dual2} is now considered as a classical result \cite{V05,V10,Bonavolonta}, we insert a proof in order to fix sign conventions and notations.

\begin{proof}[Proof of Proposition \ref{prop:dual2}]
	
	Any vector field $Q\in{\mathfrak X}(E)$ is a derivation of $\mathcal{E}$ and
	can be decomposed as a sum
	$$
	Q=\sum_{k\in\Zz} Q^{(k)}
	$$
	where each $Q^{(k)}$  is an arity $k$ map.
	For instance, each homogeneous section of $E$, $e\in\Gamma(E)$, defines a canonical $\mathcal{O}$-linear vector field $i_{e}\in{\mathfrak X}(E)$ of arity $-1$ and degree $|e|$, defined by
	$$
	i_e(\al)=  \eval{\al,e}, \qquad \al\in \E^1=\Gamma(E^*).
	$$

The brackets and the anchor  of the Lie $\infty$-algebroid   corresponding to   a vector field $Q\in{\mathfrak X}({E})$ of degree $+1$ are given by:
 \begin{align*}
  \brr{Q,i_{e}}^{(0)}(f)=&-\rho(e)\brr{f}, \quad\qquad  f\in\mathcal{O}, \, e\in \Gamma(E)\\
 \brr{\ldots\brr{ \brr{Q,i_{e_1}},i_{e_2} },\ldots,i_{e_k}}^{(-1)}
 =&(-1)^{k+1}i_{\set{e_1,e_{2},\ldots, e_k}_k}, \qquad  e_1,\ldots,e_k\in\Gamma(E),
 \end{align*}
 where $\brr{.,.}$ stands for the graded Lie bracket on ${\mathfrak X}(E)$:
 $$
 \brr{Q,P}=Q\smalcirc P - (-1)^{|Q||P|} P\smalcirc Q.
$$

For instance, for $\al\in \Gamma(E^*)$, $e,e_1,\ldots,e_n\in\Gamma(E)$, $f\in\mathcal{O}$,
\begin{align*}
\eval{Q^{(0)}(\al),e}&=(-1)^{|\al|}\eval{\al, l_1(e)},\\
\eval{Q^{(1)}(f), e}&=-\rho(e)(f),\\
\eval{Q^{(1)}(\al), e_1\odot e_2}&= (-1)^{|\al|}\eval{\al,\set{e_1,e_2}_2}-(-1)^{|e_2|}\rho(e_1)(\eval{\al,e_2}) - \rho(e_2)(\eval{\al,e_1})
%(-1)^{|\al|}\left((-1)^{|e||e'|+1}\eval{\al,\set{e,e'}_2} + (-1)^{|e'|}\rho(e)\eval{\al,e'} + \rho(e')\eval{\al,e}\right).
\end{align*}
and, for each $n\geq 3$, the map $Q^{(n-1)}:\E^1\to \E^n$ is the ``dual" of the $n$-ary bracket~$l_n$:
$$
Q^{(n-1)}(\al)(e_1\odot e_2\odot\ldots\odot e_n)=(-1)^{|\al|} \eval{\al, l_n(e_1, e_2\ldots,  e_n)}.
$$
With this correspondence, it is routine to check that $Q \smalcirc Q=0$ if and only if $(\set{l_k}_{k \geq 1}, \rho)$ is a Lie $\infty $-algebroid structure.
\end{proof}

Lie $\infty$-algebroid morphisms may be defined using the duality developed in Proposition \ref{prop:morphisms}.
%, we mean a morphism between the corresponding graded commutative differential algebras:

\begin{defn}
\label{def:LIe:infty:morphism:oid}
%	Let  $(E,Q\equiv(\set{l_k}_{k\geq 1},\rho))$  and $(E',Q'\equiv(\set{l_k'}_{k\geq 1},\rho'))$ be Lie $\infty$-algebroids with sheaves of functions $\mathcal{E}$ and $\mathcal{E'}$ respectively.
A \textbf{Lie $\infty$-algebroid morphism} $\Phi: (E,Q\equiv(\set{l_k}_{k\geq 1},\rho))\to (E',Q'\equiv(\set{l_k'}_{k\geq 1},\rho'))$  is a degree $0$ algebra morphism $\Phi^*:\mathcal{E'}\to \mathcal{E}$ such that
$$
\Phi^*\smalcirc Q'=Q\smalcirc \Phi^*.
$$
When $\Phi^* $ is $\mathcal O $-linear map, we shall say that the Lie $\infty$-algebroid morphism is \textbf{over the identity of $M$}.
\end{defn}

In particular, a Lie $\infty$-algebroid {morphism} $\Phi$ as in Definition \ref{def:LIe:infty:morphism:oid} induces a graded commutative algebra morphism:
 \begin{equation}\label{eq:HPhi}  H(\Phi) : H(E',Q') \mapsto H (E,Q).
 \end{equation}

\begin{remark}
\normalfont
Lie $ \infty$-algebroid morphisms over the identity of the base manifold $M$ may be defined as in Definition \ref{defn:Lie:infty:morphism} as a family of degree $0$ vector bundle morphisms $\Phi_k \colon S^k (E) \to F $, $k\geq 1$, satisfying several obvious conditions. This description is much more involved for general $ \infty$-algebroid morphisms, as it is for Lie algebroids (see Part I, chapter 4 of \cite{Kirill05}). We will need this point of view in the next section.
\end{remark}

\begin{convention}
    From now on, all Lie $\infty$-algebroids morphisms that we shall consider will be over the identity of the base manifold $M$  and this assumption will be implicit.
\end{convention}

\begin{remark} \label{rem:arity:zero:infty:morphism}
\normalfont
There is another map at cohomology level that should not be confused with the morphism of Equation \eqref{eq:HPhi}.
The arity $0$ component  of a Lie $\infty $-algebroid morphism $\Phi $ induces
% a smooth map $\phi:M\to M'$ between their respective base manifolds and  
a chain map $\phi_1:E_i\to E_i'$, $i\geq 1$, between their linear parts:
\[\xymatrix{
\ldots\ar[r]^{l_1} & E_{-3} \ar[r]^{l_1 }\ar[d]^{\phi_1}& E_{-2} \ar[d]^{\phi_1}\ar[r]^{l_1 }&  E_{-1} \ar[d]^{\phi_1}\ar[r]^{-\rho}& TM \ar[d]^{{\rm{id}}}.\\
\ldots\ar[r]_{l_1'}& E'_{-3} \ar[r]_{l_1'}& E'_{-2}\ar[r]_{l_1'}&  E'_{-1}\ar[r]_{-\rho'}& TM'
}
 \]
 The arity $0$ component of $\Phi $ is in fact entirely determined by this chain map.
\end{remark}

Homotopies of  Lie $\infty$-algebroid   defined exactly as  for Lie $\infty$-algebras (see Definition \ref{def:HomotopyMorphisms}) as being graded differential algebra morphisms:
 $$ \Xi^* \colon (\mathcal E',Q') \to (\mathcal E \hat{\otimes} \Omega^\bullet([0,1]), Q \otimes {\rm id} + {\rm id} \otimes d_{dR})$$
 with the additional condition that $\Xi^* $ has to be over the identity of $M$, i.e. has to be $\mathcal O $-linear. Also, $\hat{\otimes}$ refers to the completion that identifies $\mathcal E \hat{\otimes} \Omega^\bullet([0,1])$ with sections of the graded symmetric algebra of $E^* \oplus T^* [0,1] $ seen as a vector bundle over $M \times [0,1] $. Equivalently, this means that $\Xi_t^*,H_t^*$ are  $\mathcal O $-linear for all $t \in [0,1]$, see \cite{LLS} Section 3.4.3. 
The previous reference provides a proof of the following fact:
if two Lie $\infty$-algebroid morphisms $\Psi$ and $\Phi$ from $(E,Q\equiv(\set{l_k}_{k\geq 1},\rho))$ to $(E',Q'\equiv(\set{l_k'}_{k\geq 1},\rho'))$ are  homotopic, then there exists a $\mathcal{O}$-linear map $\mathcal H:\mathcal{E}'\to \mathcal{E}$ of degree $-1$ such that
\begin{equation}\label{def:homotopic:morphism}
\Psi^*-\Phi^*=Q \smalcirc \mathcal H + \mathcal H\smalcirc Q'.
\end{equation}
In turn, Equation (\ref{def:homotopic:morphism}) implies the following:
\begin{prop}
	\label{eq:homotopiesCohomo}
Two homotopic Lie $\infty$-algebroid morphisms induce the same graded algebra morphism $H^\bullet(E',Q') \to H^\bullet(E,Q) $ in cohomology.
In particular, a homotopy equivalence between two Lie $\infty$-algebroids induces an  isomorphism of their respective cohomologies.
\end{prop}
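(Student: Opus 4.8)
The plan is to deduce this proposition directly from Equation~(\ref{def:homotopic:morphism}), which is the algebraic heart of the matter and which we are allowed to assume. First I would prove the ``in particular'' claim as a corollary of the first statement, so the main work is to show: if $\Psi^* - \Phi^* = Q \smalcirc \mathcal H + \mathcal H \smalcirc Q'$ for some $\mathcal O$-linear $\mathcal H : \mathcal E' \to \mathcal E$ of degree $-1$, then $H^\bullet(\Psi) = H^\bullet(\Phi)$ as maps $H^\bullet(E',Q') \to H^\bullet(E,Q)$.

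The key steps are routine once the setup is in place. Take a class $[\alpha] \in H^\bullet(E',Q')$ represented by a $Q'$-closed element $\alpha \in \mathcal E'$, so $Q'(\alpha) = 0$. Then $\Psi^*(\alpha) - \Phi^*(\alpha) = Q(\mathcal H(\alpha)) + \mathcal H(Q'(\alpha)) = Q(\mathcal H(\alpha))$, which is a $Q$-coboundary; hence $[\Psi^*(\alpha)] = [\Phi^*(\alpha)]$ in $H^\bullet(E,Q)$. One should also note, for the statement to be well-posed, that $\Phi^*$ and $\Psi^*$ do descend to cohomology: this is because $\Phi^* \smalcirc Q' = Q \smalcirc \Phi^*$ (Definition~\ref{def:LIe:infty:morphism:oid}), so $\Phi^*$ sends $Q'$-cocycles to $Q$-cocycles and $Q'$-coboundaries to $Q$-coboundaries, and likewise for $\Psi^*$; moreover both are graded algebra morphisms, so the induced maps are graded algebra morphisms. (These facts are exactly the content of Equation~\eqref{eq:HPhi}, which I would simply cite.)

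For the ``in particular'' clause, suppose $(\Phi^*, \Psi^*)$ is a homotopy equivalence between $(\mathcal E, Q)$ and $(\mathcal E', Q')$: then $\Phi^* \smalcirc \Psi^*$ is homotopic to $\mathrm{id}_{\mathcal E'}$ and $\Psi^* \smalcirc \Phi^*$ is homotopic to $\mathrm{id}_{\mathcal E}$. Applying the first part of the proposition, $H^\bullet(\Phi^* \smalcirc \Psi^*) = H^\bullet(\mathrm{id}_{\mathcal E'}) = \mathrm{id}_{H^\bullet(E',Q')}$ and similarly $H^\bullet(\Psi^* \smalcirc \Phi^*) = \mathrm{id}_{H^\bullet(E,Q)}$. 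Since cohomology is functorial with respect to composition of Lie $\infty$-algebroid morphisms — $H^\bullet(\Phi \smalcirc \Psi) = H^\bullet(\Psi) \smalcirc H^\bullet(\Phi)$, which follows immediately from $(\Phi \smalcirc \Psi)^* = \Psi^* \smalcirc \Phi^*$ at the level of the algebras of functions — we conclude that $H^\bullet(\Phi)$ and $H^\bullet(\Psi)$ are mutually inverse graded algebra isomorphisms.

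Honestly, there is no serious obstacle here: all the difficulty has been front-loaded into Equation~(\ref{def:homotopic:morphism}), whose proof is delegated to \cite{LLS}. The only point requiring a sentence of care is checking that the various maps are well defined on cohomology and that composition is respected, so that ``$H^\bullet(\Psi) = H^\bullet(\Phi)$ and both are isomorphisms'' is even a meaningful assertion; once that bookkeeping is done, the argument is the standard chain-homotopy argument transported to the $Q$-manifold setting, and the $\mathcal O$-linearity of $\mathcal H$ plays no role beyond ensuring everything stays within the category of Lie $\infty$-algebroids over $M$.
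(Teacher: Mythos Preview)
Your proof is correct and follows exactly the approach the paper intends: the paper simply asserts that Equation~(\ref{def:homotopic:morphism}) implies the proposition without spelling out the argument, and you have provided precisely the standard chain-homotopy verification that makes this implication explicit. There is nothing to add.
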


%{\tiny{

%\begin{prop} Let $\Phi:(E,Q,\rho)\to (E',Q',\rho')$ be  a Lie $\infty$-algebroid morphism. The induced map $\phi_*:H^\bullet(E,l_1)\to H^\bullet(E',l_1')$ is a Lie algebroid morphism.
%\end{prop}

%\begin{remark}
%Homotopic Lie $\infty$-algebroid morphisms induce a homotopy between underlying  chain maps:
%\[
%\xymatrix{
%\ldots\ar[rr]^{\d} & &E_{-3} \ar[rr]^{\d}\ar@<-.5ex>[dd]_{\phi_3}\ar@<.5ex>[dd]^{\psi_3}
%& &E_{-2} \ar[ddll]_{h} \ar@<-.5ex>[dd]_{\phi_2}\ar@<.5ex>[dd]^{\psi_2}
%\ar[rr]^{\d}& &E_{-1} \ar[ddll]_{h} \ar@<-.5ex>[dd]_{\phi_1}\ar@<.5ex>[dd]^{\psi_1}\\
%&&&&&&\\
%\ldots\ar[rr]_{\d'} & &E'_{-3} \ar[rr]^{\d'} & &E'_{-2}\ar[rr]^{\d'}& & E'_{-1}.
%}
%\]
%\end{remark}
%Is it needed???
%}}

%The following is an immediate consequence of %Proposition \ref{eq:homotopiesCohomo}:

%\begin{prop} 
%\end{prop}

%%%%%%%%%%%%%%%%%%%%%%%%%%%%%%%%%%%%%%%%%%%%%%%%%%%%%%%%%%%%%%%%%%%%%%%%%%%%%%%%
%%%%%%%%%%%%%%%%%%%%%%%%%%%%%%%%%%%%% Representations %%%%%%%%%%%%%%%%%%%%%%%%%%
\subsection{Representations of Lie $\infty$-algebroids}
%%%%%%%%%%%%%%%%%%%%%%%%%%%%%%%%%%%%%%%%%%%%%%%%%%%%%%%%%%%%%%%%%%%%%%%%%%%%%%%%
%%%%%%%%%%%%%%%%%%%%%%%%%%%%%%%%%%%%%%%%%%%%%%%%%%%%%%%%%%%%%%%%%%%%%%%%%%%%%%%%

For Lie algebroids, representations are better seen as being flat connections, a point of view used by Kirill Mackenzie (see, e.g., \cite{YKSKM}).  The same holds for Lie $ \infty$-algebroids \cite{SaSc}.

Let $(E,Q  \equiv(\set{l_k}_{k\geq 1},\rho))$ be a Lie ${\infty}$-algebroid over $M$ and $V=\oplus_{i\in\Zz} V_i$ a graded vector bundle over the same manifold. In the following, we will consider that the anchor $\rho:E\to TM$ is extended by $0$ on   $ \oplus_{i \geq 2} E_{-i} $.

%\comm{\green maybe we  should we write $\rho:S(E)\to TM$, extended by $0$ to $ (\oplus_{i \geq 2} E_{-i})\oplus S^{k\geq 2}E $ and we will be done with this problem???}

\begin{defn}
\label{def:LieInftyOidConnection}
A {\textbf{Lie $\infty$-algebroid connection}} of $(E,Q\equiv(\set{l_k}_{k\geq 1},\rho))$ on $V$ is a map $\nabla: \Gamma(S(E))\times \Gamma(V)\to \Gamma(V)$ given by a family of degree $+1$ maps $(\nabla^{(k)})_{k\geq 0} $ , 
called {\textbf{Taylor coefficients}},
	$$
	\begin{array}{rrcl}
	\nabla^{(k)}:&\Gamma(S^{k}(E))\times \Gamma(V)&\to& \Gamma(V)\\
	& ( e_1 \odot \cdots \odot e_k  ,  v )& \mapsto & \nabla^{(k)}_{e_1 \odot \cdots \odot e_k} v   \\
	\end{array}
	$$
satisfying the following axioms: 

\begin{enumerate}%[(a)]
     \item $\nabla^{(k)}_{ f e_1 \odot \cdots \odot e_k} v= f\, \nabla^{(k)}_{ e_1 \odot \cdots \odot e_k}v$,
    \item For $\ds k\neq 1$, $ \nabla^{(k)}_{  e_1 \odot \cdots \odot e_k} (fv)= f\, \nabla^{(k)}_{ e_1 \odot \cdots \odot e_k}v$,
      \item $\ds  \nabla^{(1)}_{ e_1 } (fv) = f \,\nabla^{(1)}_{ e_1 } v  + \rho(e_1) [f ] \, v  $,
\end{enumerate}
for every $f\in \mathcal{O}$, $v\in\Gamma(V)$ and $e_1,\ldots, e_k\in \Gamma(E)$, $k\geq 1.$ 
\end{defn}

\begin{remark} \normalfont
In particular, $\nabla^{(0)} : V \to V$ is a vector bundle morphism of degree $+1$. 
\end{remark}

\begin{remark}
\normalfont
    Let us extend $\rho\colon \Gamma(E_{-1}) \to \mathfrak X (M)$ to  $\Gamma(S(E))$ by
     imposing it to be $0$ on $ (\oplus_{i \geq 2} E_{-i})\oplus S^{k\geq 2}E $. Then Axioms (1), (2) and (3) in Definition  \ref{def:LieInftyOidConnection} read 
     $$ \nabla^{(k)}_{  g e_1 \odot \cdots \odot e_k} (fv)= f g\,\nabla^{(k)}_{ e_1 \odot \cdots \odot e_k}v + g\,\rho(e_1,\ldots,e_k)[f]\, v,$$ for all  $ e_1 , \cdots , e_k\in\Gamma(E)$, $k\geq 1$, $ v \in \Gamma(V)$ and  $f,g \in \mathcal{O}$.
\end{remark}

%{\green 

Let us give a dual description of a Lie $\infty$-algebroid connection
of $(E,Q\equiv(\set{l_k}_{k\geq 1},\rho))$ on $V$.
Sections of the tensor product $S(E^*)\otimes V$ come with a natural left $\Gamma(S(E^*))$-module structure given by $$F \cdot (G \otimes v):= (F \odot G) \otimes v,\qquad F,G \in \Gamma(S(E^*)), v\in \Gamma(V).$$ Notice that  we identify $\mathcal O$ and $\Gamma(S^0(E^*))$.
%therefore $\Gamma(V) = C^\infty(M)\cdot \Gamma(V)$. 

\begin{prop} 
\label{prop:1to1}
\label{Block}
Let $(E,Q\equiv(\set{l_k}_{k\geq 1},\rho))$  be a Lie ${\infty}$-algebroid and $V$ a graded vector bundle over $M$.
There is a one-to-one correspondence between connections of $(E,Q\equiv(\set{l_k}_{k\geq 1},\rho)) $ on $V$ and degree $+1$ operator
\begin{equation*}
\mathcal{D} : \Gamma(S(E^* )\otimes V^*)\longrightarrow \Gamma(S(E^*)\otimes V^*)
\end{equation*}
such that,  for all $\ds \xi\in \Gamma(S(E^*))$ and $\omega\in\Gamma(S(E^* )\otimes V)$,
%{\comm{}}
\begin{equation*}
 \mathcal{D}(\xi {\cdot} \omega)=Q(\xi) {\cdot} \omega + (-1)^{|\xi|}\xi {\cdot} \mathcal{D}\omega.
\end{equation*}
The correspondence goes as follows:
$$  \langle \mathcal D  \alpha  , e_1  \odot \cdots \odot e_k \otimes v \rangle    =  (-1)^{|\alpha|}  \langle   \alpha ,  \nabla^{(k)}_{e_1  \odot \cdots \odot e_k  }  v \rangle +  (-1)^{|\alpha|+1} \rho(e_1  \odot \cdots \odot e_k  ) \left[ \langle \alpha, v \rangle\right] $$
 with the understanding that $\rho(e_1  \odot \cdots \odot e_k  ) $ is zero unless $k=1 $ and $e_1 \in \Gamma(E_{-1})$. Here, $v \in \Gamma(V), \alpha \in \Gamma(V^*), e_1  \odot \cdots \odot e_k  \in \Gamma( S^k (E))  $, $k\geq 1$.  
\end{prop}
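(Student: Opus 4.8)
The plan is to transport the structure back and forth through the displayed formula, the substance being that Axioms (1)--(3) of a connection are precisely what is needed for that formula to produce a well-defined operator, and conversely.

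\emph{From $\nabla$ to $\mathcal D$.} Starting from a connection $\nabla=(\nabla^{(k)})_{k\geq 0}$, I would first read the displayed formula as the \emph{definition} of a degree $+1$ map $\mathcal D\colon\Gamma(V^*)\to\Gamma(S(E^*)\otimes V^*)$. The point to check is that, for each fixed $k$, the right-hand side is $\mathcal O$-multilinear in $(e_1,\dots,e_k,v)$, so that it indeed represents a section of $S^k(E^*)\otimes V^*$. Multilinearity in the $e_i$ is immediate from Axiom (1) together with the convention that $\rho(e_1\odot\cdots\odot e_k)$ vanishes unless $k=1$ with $e_1\in\Gamma(E_{-1})$. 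Linearity over $\mathcal O$ in $v$ is Axiom (2) when $k\neq 1$; when $k=1$ it holds because the two ``anomalous'' terms $(-1)^{|\alpha|}\rho(e_1)[f]\langle\alpha,v\rangle$ and $(-1)^{|\alpha|+1}\rho(e_1)[f]\langle\alpha,v\rangle$ --- the first coming from Axiom (3), the second from the derivation property of the vector field $\rho(e_1)$ applied to $\langle\alpha,fv\rangle=f\langle\alpha,v\rangle$ --- cancel. In particular $\mathcal D^{(0)}$ is $\mathcal O$-linear, i.e. a bundle morphism $V^*\to V^*$ dual to $\nabla^{(0)}$.

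\emph{Extension by the Leibniz rule.} In a local frame of $V^*$ every section of $\Gamma(S(E^*)\otimes V^*)$ is a finite sum $\sum_a\xi_a\cdot\alpha_a$ with $\xi_a\in\Gamma(S(E^*))$ and $\alpha_a\in\Gamma(V^*)$, so I would set $\mathcal D(\xi\cdot\alpha):=Q(\xi)\cdot\alpha+(-1)^{|\xi|}\xi\cdot\mathcal D\alpha$ and check this is independent of the frame, equivalently that $\mathcal D((f\xi)\cdot\alpha)=\mathcal D(\xi\cdot(f\alpha))$ for $f\in\mathcal O$. Expanding the left-hand side with $Q(f\xi)=Q(f)\odot\xi+f\,Q(\xi)$ and using graded commutativity $\xi\odot Q(f)=(-1)^{|\xi|}Q(f)\odot\xi$ (recall $|Q(f)|=1$), this reduces to the single identity $\mathcal D(f\alpha)=f\,\mathcal D\alpha+Q(f)\cdot\alpha$, which follows directly from the defining formula once one recalls $\langle Q(f),e\rangle=-\rho(e)[f]$. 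The resulting $\mathcal D$ then satisfies the required Leibniz identity on all of $\Gamma(S(E^*)\otimes V^*)$ by induction on arity, has degree $+1$ by inspection, and is local, hence globally well defined.

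\emph{The converse and bijectivity.} Given $\mathcal D$ as in the statement, restrict it to $\Gamma(V^*)$ and decompose by arity, $\mathcal D|_{\Gamma(V^*)}=\sum_{k\geq 0}\mathcal D^{(k)}$ (no negative arities appear, since $\Gamma(V^*)$ lies in arity $0$). Specializing the Leibniz identity to $\xi=f\in\mathcal O$ gives $\mathcal D(f\alpha)=f\,\mathcal D\alpha+Q(f)\cdot\alpha$; since $Q(f)$ is purely of arity $1$, this shows $\mathcal D^{(k)}$ is $\mathcal O$-linear for $k\neq 1$, while $\mathcal D^{(1)}$ is a first-order operator with the anchor as symbol. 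One then reads the displayed formula backwards to define the $\nabla^{(k)}$: for $k\neq 1$ by dualizing $\mathcal D^{(k)}$ outright, and for $k=1$ by including the anchor term so as to produce the right-hand side of Axiom (3); the $\mathcal O$-linearity properties just recorded translate verbatim into Axioms (1)--(3). The two constructions are mutually inverse because in each direction $\mathcal D$ is pinned down on the module generators $\Gamma(V^*)$ by the same formula and then everywhere by the Leibniz rule --- any such $Q$-Leibniz operator being determined by its restriction to $\Gamma(V^*)$. The one step with genuine content is the $k=1$ cancellation above; the remaining difficulty is purely bookkeeping of the Koszul signs in the pairing of $\Gamma(S(E^*)\otimes V^*)$ with $\Gamma(S(E)\otimes V)$ and in the module action $\xi\cdot(G\otimes v)=(\xi\odot G)\otimes v$, which is exactly what fixes the signs $(-1)^{|\alpha|}$ and $(-1)^{|\alpha|+1}$ appearing in the statement.
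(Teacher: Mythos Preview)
Your argument is correct and complete. The paper actually states this proposition without proof, regarding the correspondence as a routine verification once the explicit formula is written down; your careful unpacking of why Axioms (1)--(3) of Definition~\ref{def:LieInftyOidConnection} are exactly the $\mathcal O$-linearity conditions needed for the displayed formula to be well-defined, together with the check that $\mathcal D(f\alpha)=f\,\mathcal D\alpha+Q(f)\cdot\alpha$ (using $\langle Q(f),e\rangle=-\rho(e)[f]$) is the sole identity required for the Leibniz extension to be frame-independent, supplies precisely the details that the paper omits.
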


We say that a Lie $\infty $-algebroid connection is  {\textbf{flat}}  when $\mathcal D^2 =0$, with $\mathcal D $ as in the above proposition. Alternatively, flat connections are called {\textbf{representations}} of $(E,Q\equiv(\set{l_k}_{k\geq 1},\rho))$ on $V$. This obviously extends Lie algebroid representations in Kirill MacKenzie's \cite{Kirill05}, and representations up to homotopy of Lie algebroids \cite{AC}. 
 Kirill MacKenzie studied those representations in the frameworks of double Lie algebroids, see \cite{GJM}-\cite{Kirill98}.

Let us spell out the meaning of flatness in terms of Taylor coefficients of the connection.  The data in Definition \ref{def:LieInftyOidConnection} define a flat connection if and only if:
\begin{enumerate}%[a)]
    \item \label{itm:first}$\nabla^{(0)} : V \to V$ makes $V$ a complex, i.e. squares to zero. From now on, we will denote $\nabla^{(0)}  $ by $D_V$ to emphasize on the fact that it is a differential.
    \item \label{itm:second} For all $e \in \Gamma(E)$, we have $${\nabla^{(1)}_{l_1(e)}=-D_V \nabla^{(1)}_e-(-1)^{|e|}\nabla_e^{(1)}D_V=\left\{D_V, \nabla^{(1)}_e\right\} }.$$
    In particular, the symmetric graded commutator $\left\{D_V, \nabla^{(1)}_e\right\}$ is zero for all $e \in \Gamma(E_{-1})$.
    \item \label{itm:third}For all $e_1, e_2 \in \Gamma(E)$, we have:
     $$  \left\{\nabla^{(1)}_{e_1}, \nabla^{(1)}_{e_2} \right\} - \nabla_{\ell_2 (e_1, e_2)}^{(1)}  = {-}\left\{D_V  ,\nabla_{e_1\odot e_2}^{(2)}  \right\} + \nabla^{(2)}_{\ell_1 (e_1) \odot e_2}  + (-1)^{|e_{1}|}\nabla^{(2)}_{e_1 \odot \ell_1(e_2)} $$
     \item More generally, for all  $e_1, \dots, e_n  \in \Gamma(E)$
     \begin{align}
         \label{eq:FlatMeans2}
         &\sum_{\begin{array}{c} \scriptstyle{i=1}\\ \scriptstyle{\sigma\in Sh(i,n-i)}\end{array}}^{\scriptstyle{n}}\!\!\!\!\!\!\!\!\!\! \varepsilon(\sigma)\nabla^{(n-i+1)}_{l_i\left(e_{\sigma(1)}, \ldots, {e_{\sigma(i)}}\right) \odot {e_{\sigma(i+1)}} \odot \cdots \odot {e_{\sigma(n)}}}= \\
&=   \left\{ D_V ,\nabla^{(n)}_{e_1 \odot \cdots \odot e_n}\right\} + \frac{1}{2} \!\!\!\!\!\!\!\!\!\!\sum_{\begin{array}{c} \scriptstyle{j=1}\\ \scriptstyle{\sigma\in Sh(j,n-j)}\end{array}}^{\scriptstyle{n-1}} \!\!\!\!\!\!\!\!\!\!\varepsilon(\sigma)\set{\nabla^{(j)}_{{e_{\sigma(1)}} \odot \cdots \odot {e_{\sigma(j)}}} , \nabla^{(n-j)}_{{e_{\sigma(j+1)}} \odot \ldots \odot {e_{\sigma(n)}}} }.
\nonumber 
   %      
  %       
 %   \sum_{I \coprod J =\{1, \dots, n\} } \left\{ \nabla^{(|I|)}_{e_I}  ,  \nabla^{(|J|)}_{e_J} \right\}  = \left\{ D_V , \nabla^{(n)}_{e_{1} \odot \cdots \odot e_n} \right\} + \nabla_{ \ell_{|I|} e_I \odot e_J }
 \end{align}
\end{enumerate}
Equation (\ref{eq:FlatMeans2})  is an obvious direct extension of the equivalent formula (\ref{eq:def:representation}) for Lie $\infty $-algebras. All items (1), (2) and (3) are in fact special cases of (\ref{eq:FlatMeans2}).

\begin{defn} \label{def:cohomologyRepresentation}
For a representation $V $ of a Lie $\infty $-algebroid $(E,Q\equiv(\set{l_k}_{k\geq 1},\rho))$, we call $ $ {\textbf{Chevalley-Eilenberg differential}} the operator $\mathcal D $ defined in the Proposition \ref{prop:1to1} and  \textbf{Lie $\infty $-algebroid cohomology}  the cohomology of the complex 
$(\Gamma(S(E^*) \otimes V^*)  , \mathcal D) $.
\end{defn}

Also, Proposition \ref{prop:1to1} allows to give a clear definition of morphisms of representations.

\begin{defn}
A \textbf{morphism} $\Psi:(V,\mathcal{D}_V)\rightarrow (W,\mathcal{D}_W)$ \textbf{between two representations} 
 $V$ and $W$  of $(E,Q\equiv(\set{l_k}_{k\geq 1},\rho))$, with respective Chevalley-Eilenberg differentials $\mathcal D_V,\mathcal D_W $,
%A \textbf{morphism} $\Psi:(V,\mathcal{D}_V)\rightarrow (W,\mathcal{D}_W)$ \textbf{between two representations} of the Lie $\infty$--algebroid $(E,Q,\rho)$ 
is a  degree zero $\Gamma(S(E^*))$-linear map
$$
\Psi:\Gamma(S(E^*)\otimes W^*) \to \Gamma(S(E^*)\otimes V^*),
$$
such that $\ds \Psi\smalcirc \mathcal{D}_W= \mathcal{D}_V\smalcirc \Psi.$
\end{defn}

We now extend to graded vector bundles Kirill Mackenzie's construction of the Atiyah Lie algebroid of a vector bundle (Part I, Section 3 of \cite{Kirill05}).

\begin{defn}
    Let $(V,D_V) $ be a complex, indexed by $\mathbb Z $, of vector bundles over $M$. Define a  graded vector bundle $(A_{-i})_{i \in \mathbb Z} $ over $M$ by:
    \begin{enumerate}%[a)]
        \item $ A_{-1} := {\mathrm{CDO}}(V) $ is the Atiyah algebroid of the graded vector bundle $V$, i.e. its sections are degree $0$ maps 
        $ \delta : \Gamma(V) \to \Gamma(V)$ such that there exists a unique vector field $\rho(\delta) $ on $M$ satisfying:
        $$ \delta (fv) = f \delta (v) + \rho(\delta)[f] \, v, \quad f \in \mathcal O, v \in \Gamma(V).$$
        \item for $i \geq 2$, $A_{-i} :=   {\mathrm{End}}_{-i+1} (V, V) $ is the vector bundle of vector bundle endomorphisms of degree $-i+1 $. 
    \end{enumerate}
    This space comes equipped with a natural differential graded Lie algebroid structure when equipped with a symmetric graded commutator as a bracket, and the commutator with $D_V $ as a differential. The anchor is the map $\delta \mapsto \rho (\delta) $ of the first item.  We call it the  \textbf{Atiyah differential graded Lie algebroid of $(V,D_V)$}.
\end{defn}

\begin{remark}
\normalfont
The  Atiyah differential graded Lie algebroid is not negatively graded, and cannot be restricted to its negatively graded part. 
\end{remark}

For Lie algebroids,  representations become morphisms valued in the Atiyah Lie algebroids \cite{Kirill05}. Similarly:

\begin{prop}
Let $(E,Q\equiv(\set{l_k}_{k\geq 1},\rho))$  be a Lie $\infty $-algebroid and $(V,D_V)$ a complex of vector bundles over $M$.
There is one-to-one correspondence between flat connections $(\nabla^{(k)})_{k\geq 0}$ of a Lie $\infty$-algebroid $(E,Q\equiv(\set{l_k}_{k\geq 1},\rho))$ on $V$ with $D_V=\nabla^{(0)} $ and Lie $\infty $-algebroid morphisms from $(E,Q\equiv(\set{l_k}_{k\geq 1},\rho))$ to the Atiyah differential graded Lie $\infty $-algebroid of the complex $(V,D_V)$.
\end{prop}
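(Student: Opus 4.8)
The plan is to unwind both sides of the claimed correspondence to the same data and check they match. On the one hand, a Lie $\infty$-algebroid morphism $\Psi$ from $(E,Q\equiv(\set{l_k}_{k\geq 1},\rho))$ to the Atiyah differential graded Lie algebroid $A=A(V,D_V)$ over the identity of $M$ is, by the remark following Definition \ref{def:LIe:infty:morphism:oid}, given by a family of $\mathcal O$-linear degree $0$ vector bundle morphisms $\Psi_k\colon S^k(E)\to A$, $k\geq 1$, satisfying the morphism equations of Definition \ref{defn:Lie:infty:morphism} (adapted to the algebroid setting), plus the compatibility of $\Psi_1$ on $E_{-1}$ with the anchors. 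Since $A_{-1}={\rm CDO}(V)$ and $A_{-i}={\rm End}_{-i+1}(V,V)$ for $i\geq 2$, a component $\Psi_k$ of degree $0$ splits: on the part of $S^k(E)$ landing in $A_{-1}$ it takes values in first-order operators on $V$, and elsewhere in genuine bundle endomorphisms. On the other hand, a flat connection is a family $(\nabla^{(k)})_{k\geq 0}$ as in Definition \ref{def:LieInftyOidConnection} with $\nabla^{(0)}=D_V$, and $\nabla^{(k)}_{e_1\odot\cdots\odot e_k}$ is exactly such a degree $0$ operator on $V$ — first order precisely when $k=1$ and $e_1\in\Gamma(E_{-1})$ by Axioms (1)--(3). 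So the dictionary is simply $\Psi_k(e_1\odot\cdots\odot e_k):=\nabla^{(k)}_{e_1\odot\cdots\odot e_k}\in\Gamma(A)$ for $k\geq 1$, with $D_V=\nabla^{(0)}$ playing the role of the differential $l_1^A$ of the Atiyah algebroid.

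The steps, in order, are: (i) Spell out, using the unital algebra $\underline{\mathcal E}$ and the identification $A_{-1}={\rm CDO}(V)\simeq$ (derivations of $V$), that a $\mathcal O$-linear $\Psi_1\colon E\to A$ of degree $0$ is the same as a family $\nabla^{(1)}$ obeying Axioms (1) and (3), and that its value on $\Gamma(E_{-1})$ anchors correctly onto $\rho(\delta)$ — this identifies the anchor-compatibility condition of the morphism with the Leibniz rule (3) in Definition \ref{def:LieInftyOidConnection}. (ii) Observe that $\mathcal O$-linearity of $\Psi_k$ for all $k$ is precisely Axiom (1), and that $\Psi_k$ ($k\neq 1$) automatically lands in $\bigoplus_{i\geq 2}A_{-i}={\rm End}(V,V)$, i.e. gives $\mathcal O$-linear bundle maps, matching Axiom (2). (iii) Recall from Example \ref{ex:DGLA:End:E} (extended to $A$) that the DGLA structure on $A$ has $l_1^A=\{D_V,-\}$ and $l_2^A=\{\,,\,\}$ the graded commutator, and higher brackets zero. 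Plug this into the Lie $\infty$-morphism equations of Definition \ref{defn:Lie:infty:morphism}: the right-hand side only involves $m_1=l_1^A$ and $m_2=l_2^A$, so it collapses to exactly the right-hand side of Equation (\ref{eq:FlatMeans2}), while the left-hand side is exactly the left-hand side of (\ref{eq:FlatMeans2}) under the dictionary. (iv) Conclude the morphism equations for $(\Psi_k)$ hold if and only if (\ref{eq:FlatMeans2}) holds for all $n$, i.e. if and only if $\mathcal D^2=0$ (flatness), invoking the already-established equivalence between flatness stated as $\mathcal D^2=0$ and the system (\ref{eq:FlatMeans2}). (v) Note both constructions are manifestly inverse to each other, finishing the bijection.

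Concretely I would write: "Let $\Psi$ be a Lie $\infty$-algebroid morphism from $(E,Q\equiv(\set{l_k}_{k\geq 1},\rho))$ to the Atiyah differential graded Lie algebroid $A$ of $(V,D_V)$, given by bundle maps $\Psi_k\colon S^k(E)\to A$. Define $\nabla^{(0)}:=D_V$ and $\nabla^{(k)}_{e_1\odot\cdots\odot e_k}:=\Psi_k(e_1\odot\cdots\odot e_k)$ for $k\geq 1$. The $\mathcal O$-linearity of the $\Psi_k$ is Axiom (1); the fact that $\Psi_k$ for $k\neq 1$ must land in $\bigoplus_{i\geq 2}A_{-i}$ (there is no way for a symmetric product of $\geq 2$ sections, or a section of $E_{-i}$ with $i\geq 2$, to produce a first-order operator while respecting the grading and $\mathcal O$-linearity) gives Axiom (2); the anchor-compatibility of $\Psi_1$ on $\Gamma(E_{-1})$ with the anchor $\delta\mapsto\rho(\delta)$ of $A$ is Axiom (3). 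Conversely a flat connection yields such a $\Psi$." Then I would display that, because $A$ is a DGLA (only $m_1^A,m_2^A$ nonzero), the morphism relation of Definition \ref{defn:Lie:infty:morphism} for $\Psi$ becomes term-by-term Equation (\ref{eq:FlatMeans2}), hence is equivalent to $\mathcal D^2=0$.

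The main obstacle I anticipate is bookkeeping rather than conceptual: (a) getting the Koszul signs to line up exactly between Definition \ref{defn:Lie:infty:morphism}'s unshuffle sums and the sums in Equation (\ref{eq:FlatMeans2}) (the $\frac{1}{j!}$ versus $\frac12$ normalizations, and the sign conventions of Example \ref{ex:DGLA:End:E} for $\partial$ and $\{\,,\,\}$), and (b) carefully justifying, at the level of the graded vector bundle $A$, that a degree-$0$ $\mathcal O$-linear map out of $S^k(E)$ with $k\geq 2$ or out of $E_{-i}$ with $i\geq 2$ necessarily takes values in the $\mathcal O$-linear part ${\rm End}(V,V)$ and not in the CDO part — i.e. that the "first-order" phenomenon is confined to $\nabla^{(1)}$ on $E_{-1}$, exactly as encoded in Axioms (2) and (3). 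Once these sign/linearity verifications are done, the equivalence with flatness is immediate from the already-recorded reformulation of $\mathcal D^2=0$ as the system (\ref{eq:FlatMeans2}).
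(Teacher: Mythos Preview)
Your proposal is correct and follows essentially the same approach as the paper: the paper's proof is a one-line appeal to Equation~\eqref{eq:FlatMeans2}, noting that the $k$-th Taylor coefficient of the morphism is $\nabla^{(k)}$, and your argument is a careful (and accurate) unwinding of exactly this identification, including the verification that the connection axioms (1)--(3) correspond to $\mathcal O$-linearity and anchor compatibility of the $\Psi_k$. The sign bookkeeping you flag in (a) is genuine but routine, and your observation in (b) about where $\Psi_k$ lands for $k\geq 2$ or on $E_{-i}$ with $i\geq 2$ is correct for the degree reasons you indicate.
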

\begin{proof}
    This is precisely the content of Equation \eqref{eq:FlatMeans2}. The $k$-th Taylor coefficient of the Lie $\infty$-morphism is precisely $\nabla^{(k)} $, for $k \geq 1$.
\end{proof}

%a family of degree $+1$ maps:
%	$$
%	D_k:\Gamma(S^{k}(E))\times \Gamma(V)\to \Gamma(V),
%	$$
%	called, for $k \geq 1$, {\textbf{$k$-th Taylor coefficients}} of the connection
%	while $D_0: \Gamma(V) \mapsto  \Gamma(V)$ is a degree $+1$ operator squaring to zero called the {\textbf{differential of the representation}}.
Also, representations of Lie $\infty $-algebroids may be described as semi-direct products.

\begin{prop} A representation
$(\nabla^{(n)})_{n \geq 1} $ of a Lie $\infty$-algebroid $(E,Q\equiv(\set{l_k}_{k\geq 1},\rho))$ over $V$  endows the graded vector bundle $E\oplus V$ with a structure of Lie $\infty$-algebroid $(\ell_k)_{k \geq 1} $ when equipped with  the brackets:
 \begin{align*}
 \ell_k(e_1 , \dots, e_{k}) &=  l_k(e_1 , \dots, e_{k})
     \\
 \ell_k(e_1 , \dots, {e_{k-1}}, v) & =   \nabla^{(k-1)}_{e_1 \odot \cdots \odot e_{k-1}} v
 \end{align*} 
for all $e_1, \dots, e_k \in \Gamma(E), v \in \Gamma(V) $. All other brackets are zero and the anchor is  $(e,v) \mapsto \rho(e)$, for all $(e,v) \in \Gamma(E_{-1} \oplus V_{-1}) $.
\end{prop}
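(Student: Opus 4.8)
The plan is to pass to the dual, $Q$-manifold picture and invoke Proposition~\ref{prop:dual2}. Functions on the graded vector bundle $E\oplus V$ are sections of $S((E\oplus V)^*)\cong S(E^*)\otimes S(V^*)$, so it is enough to exhibit a degree $+1$ derivation $\mathcal Q$ of $\Gamma\bigl(S(E^*)\otimes S(V^*)\bigr)$ with $\mathcal Q^2=0$, and then to read off from the dictionary established in the proof of Proposition~\ref{prop:dual2} that the brackets and anchor it encodes are exactly the $(\ell_k)_{k\geq 1}$ and $(e,v)\mapsto\rho(e)$ of the statement. This is the vector-bundle analogue of the semi-direct product of a Lie $\infty$-algebra with a representation (Remark~\ref{rem:PhikFOrRepresentation} and Proposition~\ref{repres:1to1}), and the argument follows that pattern.

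To build $\mathcal Q$ I would prescribe it on generators: on $\mathcal O$ and on $\Gamma(E^*)$ set $\mathcal Q:=Q$, with values viewed inside $\Gamma(S(E^*))$; on $\Gamma(V^*)$ set $\mathcal Q(\alpha):=\mathcal D(1\otimes\alpha)$, where $\mathcal D$ is the Chevalley--Eilenberg operator of the given representation furnished by Proposition~\ref{prop:1to1}, with value inside $\Gamma(S(E^*)\otimes V^*)$. Extending by the graded Leibniz rule with Koszul signs produces a well-defined degree $+1$ derivation $\mathcal Q$. Two remarks then make $\mathcal Q^2=0$ immediate: $\mathcal Q$ restricts to $Q$ on the subalgebra $\Gamma(S(E^*))$, and $\mathcal Q$ restricts to $\mathcal D$ on the $\Gamma(S(E^*))$-submodule $\Gamma(S(E^*)\otimes V^*)$ — this second point is exactly where one uses that $\mathcal D$ satisfies the Leibniz identity $\mathcal D(\xi\cdot\omega)=Q(\xi)\cdot\omega+(-1)^{|\xi|}\xi\cdot\mathcal D\omega$ of Proposition~\ref{prop:1to1}. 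Since $\mathcal Q^2$ is again a derivation, it suffices to check it on generators, where it equals $Q^2=0$ on $\mathcal O$ and $\Gamma(E^*)$, and equals $\mathcal D^2$ applied to $1\otimes\alpha$ on $\Gamma(V^*)$; the latter vanishes because flatness of the connection is precisely $\mathcal D^2=0$. Hence $\mathcal Q^2=0$, and Proposition~\ref{prop:dual2} equips $E\oplus V$ with a Lie $\infty$-algebroid structure $\bigl((\ell_k)_{k\geq 1},\tilde\rho\bigr)$.

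It remains to identify $\tilde\rho$ and the $\ell_k$, which I would do by sorting, according to how many factors lie in $V$, the pairing of $\mathcal Q$ applied to a generator against a symmetric monomial of sections of $E\oplus V$. Testing against $\xi\in\Gamma(E^*)$: since $\mathcal Q(\xi)=Q(\xi)$ carries no $V^*$-factor, this gives $\tilde\rho(e,v)=\rho(e)$, forces the $E$-component of $\ell_k$ to vanish as soon as one argument lies in $V$, and yields $\ell_k(e_1,\dots,e_k)=l_k(e_1,\dots,e_k)$ on $E$-arguments. Testing against $\alpha\in\Gamma(V^*)$: since every monomial in $\mathcal Q(\alpha)=\mathcal D(1\otimes\alpha)$ carries exactly one $V^*$-factor, all brackets with two or more arguments in $V$ die, while a bracket with a single $V$-argument $v$ is read off from the explicit formula for $\mathcal D$ in Proposition~\ref{prop:1to1} to be $\nabla^{(k-1)}_{e_1\odot\cdots\odot e_{k-1}}v$.

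The delicate step is the last one at arity $2$: the $\rho$-correction terms in the formula for $\mathcal D$ in Proposition~\ref{prop:1to1} must be matched against the anchor terms carried by the $\mathcal Q$-to-bracket dictionary at arity $2$ (the terms of type $-(-1)^{|\alpha|}\rho(e_1)\langle\alpha,e_2\rangle$ appearing in the proof of Proposition~\ref{prop:dual2}), so that no spurious first-order terms survive; this, together with keeping Koszul signs consistent throughout, is the main bookkeeping obstacle. A more pedestrian alternative is to substitute the proposed $(\ell_k)$ directly into the sheaf version of the higher Jacobi identities and verify them from the flatness relations~\eqref{eq:FlatMeans2} and the Lie $\infty$-identities of $E$; that route also works, but it then requires checking the $\mathcal O$-linearity and Leibniz-with-anchor axioms by hand, whereas through Proposition~\ref{prop:dual2} these come for free.
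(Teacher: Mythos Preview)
Your proposal is correct and matches the paper's own argument almost exactly: the paper's proof is two sentences, saying that ``a direct computation shows that flatness is equivalent to higher Jacobi identities'' and that ``alternatively, it also follows from Proposition~\ref{prop:1to1}''. Your main argument is a careful unpacking of that alternative route (extend $\mathcal D$ to a derivation $\mathcal Q$ of $\Gamma(S(E^*)\otimes S(V^*))$, check $\mathcal Q^2=0$ on generators, then invoke Proposition~\ref{prop:dual2}), and your ``pedestrian alternative'' is precisely the paper's primary route via Equation~\eqref{eq:FlatMeans2}; so there is nothing to add.
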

\begin{proof}
A direct computation shows that flatness is equivalent to higher Jacobi identities. All other axioms are obvious.
Alternatively, it also follows from Proposition \ref{prop:1to1}.
\end{proof}

A representation of a Lie algebroid on a vector bundle $V$ induces a representation on $V^*$. The same applies for a Lie $  \infty $-algebroid. 

%\begin{prop}
Let $(E,Q\equiv(\set{l_k}_{k\geq 1},\rho))$  be a Lie $\infty $-algebroid and  $(V,D_V)$ a complex of vector bundles over $M$.
 Any connection $(\nabla^{(n)})_{n \geq 0} $ with $\nabla^{(0)}=D_V $ admits  %$\mathcal{D}:\Gamma(S(E^*)\otimes V)\to \Gamma(S(E^*)\otimes V)$
 %has 
 a \textbf{dual connection
 }
 $(^*\nabla^{(n)})_{n \geq 0} $  on the dual graded vector bundle $V^*$. In terms of Taylor coefficients, it is defined by $^*\nabla^{(0)}=-D_V^* $
 %$\mathcal{D^*}:\Gamma(S(E^*)\otimes V^*)\to \Gamma(S(E^*)\otimes V^*)$ 
and
 $$ 
  \langle  ^*\nabla^{(n)}_{ e_1 \odot \cdots \odot e_n}   \alpha, v  \rangle +(-1)^{|\alpha|(|e_1|+ \cdots |e_n|+1)}
 \langle \alpha ,\nabla^{(n)}_{ e_1 \odot \cdots \odot e_n}  v \rangle  = \rho( e_1 \odot \cdots \odot e_n ) \langle \alpha, v\rangle  $$
 for all $\alpha \in \Gamma(V^*), v \in \Gamma(V), e_1 , \cdots , e_n \in \Gamma(E) $.
 
 %\begin{equation*}
 % \eval{\mathcal{D^*}(\al),v}+ \eval{\al,\mathcal{D}(v)}=Q(\eval{\al,v}).
 %\end{equation*}
%for all $v$
%\end{prop}
\begin{prop}
 The dual of the dual of a connection $\nabla $ is mapped to $\nabla $ under the canonical isomorphism $(V^*)^* \simeq V $. 
A Lie $\infty $-algebroid connection is flat if and only if its dual connection is flat.
\end{prop}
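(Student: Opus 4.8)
The plan is to reduce both statements to the single identity
$$ \mathcal D_{(V^*)^*} = \mathcal D_V $$
under the canonical isomorphism $(V^*)^* \simeq V$, using the dual description of connections provided by Proposition \ref{prop:1to1}. First I would record, from the defining formula for $^*\nabla^{(n)}$, the explicit recipe that sends a connection on $V$ to one on $V^*$: it takes $\nabla^{(0)}=D_V$ to $-D_V^*$ and, for $n\geq 1$, is characterized by the pairing relation
$$ \langle {^*\nabla^{(n)}_{e_1\odot\cdots\odot e_n}}\alpha,v\rangle + (-1)^{|\alpha|(|e_1|+\cdots+|e_n|+1)}\langle\alpha,\nabla^{(n)}_{e_1\odot\cdots\odot e_n}v\rangle = \rho(e_1\odot\cdots\odot e_n)\langle\alpha,v\rangle. $$
Applying this construction twice and using $-(-D_V^*)^* = D_V$ on Taylor coefficient $0$, I would check that on Taylor coefficients $n\geq 1$ the double-dual recipe reproduces exactly $\nabla^{(n)}$, once one untangles the Koszul sign $(-1)^{|\alpha|(\cdots)}$ appearing twice: the two sign factors multiply to $+1$ because the degree of a section of $(V^*)^*$ equals that of the corresponding section of $V$, and the $\rho$-terms cancel pairwise since $\langle\langle v,\alpha\rangle\rangle$ and $\langle\alpha,v\rangle$ differ only by the usual duality sign which is already absorbed. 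This establishes the first sentence.

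For the second sentence, the cleanest route is to identify the Chevalley--Eilenberg operator $\mathcal D_{V^*}$ associated to the dual connection $(^*\nabla^{(n)})_{n\geq 0}$ with the operator on $\Gamma(S(E^*)\otimes V)$ that is \emph{transpose} to $\mathcal D_V$ with respect to the natural $\Gamma(S(E^*))$-bilinear pairing between $\Gamma(S(E^*)\otimes V^*)$ and $\Gamma(S(E^*)\otimes V)$ (valued in $\mathcal E$). Concretely, one shows that for $\psi\in\Gamma(S(E^*)\otimes V^*)$ and $\omega\in\Gamma(S(E^*)\otimes V)$ one has a graded Leibniz identity of the form
$$ Q\langle\psi,\omega\rangle = \langle \mathcal D_V\psi,\omega\rangle + (-1)^{|\psi|}\langle\psi,\mathcal D_{V^*}\omega\rangle, $$
where on the right $\mathcal D_{V^*}$ denotes the operator built from $(^*\nabla^{(n)})$ by the formula of Proposition \ref{prop:1to1} applied to the graded vector bundle $V^*$ (so it acts on $\Gamma(S(E^*)\otimes (V^*)^*)=\Gamma(S(E^*)\otimes V)$). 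This identity is verified by evaluating both sides on decomposables $e_1\odot\cdots\odot e_k\otimes v$, $e_1\odot\cdots\odot e_k\otimes\alpha$ and invoking the $Q$-derivation property together with the defining pairing relation for $^*\nabla$; it is precisely the abstract reason why dualizing a representation preserves the representation axiom. Granting this, $\mathcal D_{V^*}^2=0$ is equivalent to $\mathcal D_V^2=0$: one computes $Q^2\langle\psi,\omega\rangle=0$ two ways and compares the two mixed terms, which forces $\langle\mathcal D_V^2\psi,\omega\rangle = \pm\langle\psi,\mathcal D_{V^*}^2\omega\rangle$ for all $\psi,\omega$; nondegeneracy of the pairing (here the finite-rank assumption on the bundles is used) then gives $\mathcal D_V^2=0 \iff \mathcal D_{V^*}^2=0$.

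The main obstacle I expect is purely bookkeeping of the Koszul and duality signs: one must be careful that the sign $(-1)^{|\alpha|(|e_1|+\cdots+|e_n|+1)}$ in the definition of $^*\nabla$, the sign $(-1)^{|\alpha|}$ in Proposition \ref{prop:1to1}, the sign flip in $^*\nabla^{(0)}=-D_V^*$, and the sign governing the pairing $\langle\alpha,v\rangle$ versus $\langle v,\alpha\rangle$ all combine to give exactly the identity operator in the double-dual and exactly a graded derivation/transpose relation in the flatness argument. A convenient safeguard is to phrase everything in the coordinate-free language of the canonical pairing $S(E^*)\otimes V^*$ against $S(E^*)\otimes V$ valued in $\mathcal E$, so that the statement ``dualizing is taking the $\mathcal E$-linear transpose relative to $Q$'' makes both claims manifest with only one sign computation to perform; alternatively one can simply note that dual representation was already defined in the Lie $\infty$-algebra case via the graded Lie algebra morphism $\mathrm{End}(V)[1]\to\mathrm{End}(V^*)[1]$, $\phi\mapsto-\phi^*$, and its evident square, and run the same argument fibrewise over $M$ using the Atiyah differential graded Lie algebroid picture of the preceding proposition, where $(^*)^2=\mathrm{id}$ and flatness-preservation are immediate from the corresponding facts for $\mathrm{End}$.
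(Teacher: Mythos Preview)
The paper states this proposition without proof, treating both assertions as routine verifications following directly from the definition of the dual connection. Your proposal is therefore not competing with a paper proof but filling in what the paper omits.

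Your strategy is sound. For the double-dual statement, checking Taylor coefficients against the defining pairing relation is the obvious route, and your identification of the sign bookkeeping as the only real obstacle is accurate; the computation goes through once one fixes the convention $\langle v,\alpha\rangle = (-1)^{|v||\alpha|}\langle\alpha,v\rangle$ for the canonical identification $(V^*)^*\simeq V$. For flatness, your transpose argument via the Leibniz identity
\[
Q\langle\psi,\omega\rangle = \langle \mathcal D_V\psi,\omega\rangle + (-1)^{|\psi|}\langle\psi,\mathcal D_{V^*}\omega\rangle
\]
is exactly right: applying $Q$ a second time, the two cross terms cancel and one is left with $\langle\mathcal D_V^2\psi,\omega\rangle + \langle\psi,\mathcal D_{V^*}^2\omega\rangle = 0$, whence nondegeneracy gives the equivalence. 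This is the cleanest way to do it and is precisely the ``abstract reason'' you describe. The alternative you mention at the end---using the graded Lie algebra morphism $\mathrm{End}(V)[1]\to\mathrm{End}(V^*)[1]$, $\phi\mapsto -\phi^*$, and the Atiyah DGLA picture---is also valid and perhaps closer in spirit to how the paper would justify omitting the proof: once representations are Lie $\infty$-morphisms into the Atiyah DGLA, dualizing is post-composition with a DGLA involution, so both claims are immediate.

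One small caution: in your Leibniz identity you write $\mathcal D_{V^*}$ for the operator acting on $\Gamma(S(E^*)\otimes V)$, which in the paper's conventions is the Chevalley--Eilenberg differential of the dual representation $V^*$ (since $\mathcal D$ of a representation on $W$ acts on sections of $S(E^*)\otimes W^*$). This is consistent with what you intend, but it would be worth stating explicitly in a written-up version to avoid confusion between ``the operator built from $^*\nabla$'' and ``the operator on $V^*$-valued forms''.
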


\subsection{Adjoint and coadjoint representations}
%%%%%%%%%%%%%%%%%%%%%%%%%%%%%%%%%%%%%%%%%%%%%%%%%%%%%%%%%%%%%%%%%%%%%%%%%%%%%%%%%%%%%%%%%%%%%%%%%%%%%%%%%%%%%%%%%%%%%%%%%%%%%%%%%%%%%%%

\label{sec:adjointcoadjoint}

Adjoint and coadjoint representations of Lie $\infty $-algebroids are not as easy to define as those of Lie $\infty $-algebras: there is no clear equivalent of Equations \eqref{eq:adjoint:representation:algebra}. Also, adjoint and coadjoint representations of Lie algebroids are not so easy to define, and depend on the choice of a connection \cite{AC}.
For Lie $ \infty$-algebroids, they can be defined upon a choice of connections on $E$, as we will now see.
Let $(E,Q\equiv(\set{l_k}_{k\geq 1},\rho))$ be a Lie $\infty$-algebroid over $M$.
Recall that $\mathcal E:= \Gamma(S(E^*))$. 
Let $ {\mathfrak X}(E)= \oplus_{i \in {\mathbb Z}} {\mathfrak X}_i(E) $ be the graded Lie algebra of vector fields on the graded manifold $E$ (i.e. graded derivations of $\underline{\mathcal E}$),
then $$ {\rm ad}_Q : {P} \mapsto [Q,P ]=Q\smalcirc P -(-1)^{|P|} P\smalcirc Q $$ is a degree $+1$ operator on $ {\mathfrak X}(E)$ squaring to zero. It satisfies the graded Leibniz identity:
\begin{equation}
\label{eq:Leibniz}
{\rm ad}_Q (F.P)= Q[F] \, {P} + F. {\rm ad}_Q {P}.
\end{equation}
for all $F \in {\mathcal E}$, $P \in  {\mathfrak X}(E)$.

\begin{defn}
We call {\textbf{ abstract coadjoint complex}} the complex  $ ({\mathfrak X}(E)= \oplus_{i \in {\mathbb Z}} {\mathfrak X}_i(E) , {\rm ad}_Q)$.
\end{defn}

Let us choose a family $\nabla=(\nabla^i)_{i \geq 1}$ of vector bundle connections\footnote{Here $\nabla^i$ is a $TM$-connection on $E_{-i}$, and must not be confused with the $i+1$-ary operation~$\nabla^{(i)}$}  on each one of the vector bundles $(E_{-i})_{i \geq 1}$.
These connections allow to map a vector field $ X  $ on $M$ to the unique degree $0$ derivation of $ {\mathcal E}$ whose restriction to $\Gamma(E_{-i}^*)$ is the dual of the covariant derivative $\nabla^i_X$.
Also, $e \in \Gamma(E_{-i})$ maps to the unique degree $-i$ derivation of $ {\mathcal E}$ whose restriction to $\Gamma(E_{-j}^*)$
is $0$ for $ i \neq 0$ and whose restriction to $\Gamma(E_{-i}^*) $ is the contraction with $e$. Altogether, these mappings give an identification of $ {\mathcal E}$-modules:
$$
{\mathcal E} \otimes_{\mathcal O} \Gamma \left(E\oplus TM \right) \simeq
{\mathfrak X}(E).
$$

Under this identification, the abstract coadjoint complex becomes a degree $+1$ linear map squaring to zero
$$\ad_Q:{\mathcal E} \otimes_{\mathcal O} \Gamma \left(E\oplus TM \right)\to{\mathcal E} \otimes_{\mathcal O} \Gamma \left(E\oplus TM \right).$$
 Equation (\ref{eq:Leibniz}) and Proposition \ref{prop:1to1} imply that it is a representation of $(E,Q\equiv(\set{l_k}_{k\geq 1},\rho))$ on a graded vector bundle which is $T^*M$ in degree $0$ and $(E_{-i})^*$ in degree $+i$.
%\begin{defn}
	We call it the {\textbf{coadjoint representation of $(E,Q\equiv(\set{l_k}_{k\geq 1},\rho))$}} associated to the connection $\nabla$.
	%the complex  $ ({\mathfrak X}(E)= \oplus_{i \in {\mathbb Z}} {\mathfrak X}_i(E) , {\rm ad}_Q)$, and 
	We call {\textbf{adjoint representation of $(E,Q\equiv(\set{l_k}_{k\geq 1},\rho))$}} its dual representation: it is by construction a representation of  $(E,Q\equiv(\set{l_k}_{k\geq 1},\rho))$ on its linear part:
	 $$ \cdots \longrightarrow  E_{-2} \stackrel{l_1}{\longrightarrow} E_{-1}   \stackrel{-\rho}{\longrightarrow}  TM. $$
%\end{defn}

%\begin{remark}\normalfont
%It is easily checked that $\nabla^{(0)} $ is given for the adjoint representation by the complex:

%\end{remark}

\begin{remark}\normalfont
\label{rmk:isomorphicrepresentation}
Since the coadjoint complex associated to the connection $\nabla=(\nabla^i)_{i \geq 1}$ is isomorphic to the  abstract coadjoint complex (as differential graded $\underline{\mathcal E}$-modules), the coadjoint representations associated to two different choices of connections are strictly isomorphic (in a canonical manner). The same holds for the adjoint representation.
\end{remark}

%{\tiny{
%\begin{prop} Let $(E,Q,\rho)$ be a Lie $\infty$-algebroid.
%Given a connection $\nabla$ on $E$, the adjoint operator $\ad_Q$ defines a representation $\ad_Q^{\nabla}$ of $(E,Q)$ on the graded vector bundle $E\oplus TM$ equipped with the differential (\ref{eq:diff}). Moreover, induced representations given by different connections are isomorphic.
%\end{prop}

%\begin{proof}
% Given two connection $\nabla$ and $\nabla'$ on $E$, $\nabla_X-\nabla'_X$ is $\mathcal{O}$-linear and the isomorphism $$\Phi:(\ad^{\nabla}, E\oplus TM)\to (\ad^{\nabla'},E \oplus TM)$$ between $\ad^{\nabla}$ and $\ad^{\nabla'}$  %is defined  by the two components maps:
%\begin{align*}
%\Phi^{(0)}:\,& e+X \mapsto e+X\\
%\Phi^{(1)}:\,& e+X \mapsto \nabla_X-\nabla'_X,
%\end{align*}
%where $e+X\in\Gamma(E\oplus TM)$.
%\end{proof}
%}}

Proposition \ref{prop:adjoint:connection} provides an explicit  description of the adjoint representation, where:
\begin{enumerate}
	\item For the sake of simplicity, we now denote all the connections $\nabla^i$ by the same symbol $\nabla$,
	\item  $\brr{\nabla_X,l_k}$ stands for the graded commutator of coderivations of $\Gamma(S(E))$, namely:
	\begin{align*}
\brr{\nabla_X,l_k}&(e_1,\ldots, e_k)  \\&= \nabla_Xl_k(e_1,\ldots, e_k) - l_k(\nabla_X e_1,\ldots,e_k) - \ldots - l_k(e_1,\ldots, \nabla_X e_k),	\end{align*}
	for all $e_1,\ldots, e_k\in\Gamma(E)$, $X \in \mathfrak X(M)$.
\end{enumerate}
We hope that it can be of use for future references, since it seems that it has never been written down explicitly.

\begin{prop}\label{prop:adjoint:connection} Let $(E,Q\equiv(\set{l_k}_{k\geq 1},\rho))$ be  a Lie $\infty$-algebroid.   The Taylor coefficients of the adjoint representation $\ad^{\nabla}$  induced by a  connection $\nabla$ on $E$ are given by the differential
%%zzz minus sign ?
	 \begin{equation}
	 \label{eq:diff}
	   \xymatrix{ \cdots \ar^{l_1}[r] & E_{-2} \ar^{l_1}[r]& E_{-1} \ar^{-\rho}[r]& TM}  . 
	 \end{equation}
	and the following family of degree $+1$ maps $ (\ad^{\nabla\, (k)})_{k \geq 1}$:
\begin{equation*} \left\{
\begin{array}{rcl}
\ad^{\nabla\, (1)}_{e'}(e)&=&  \set{e',e}_2 -(-1)^{|e'|} \nabla_{\rho(e)}e',
\\
\ad^{\nabla\, (1)}_{e'}(X)&=& \brr{\nabla_X,l_1}(e')+ \brr{\rho(e'),X} + \rho(\nabla_X e')\\
\ad^{\nabla\, (2)}_{e_1\odot e_2}(X)&=& \brr{\nabla_X,l_2}(e_1, e_2) \\
& & \, \, \, + \nabla_{\brr{\rho(e_1),X} + \rho(\nabla_X e_1)}e_2 + (-1)^{|e_1|} \nabla_{\brr{\rho(e_2),X} + \rho(\nabla_X e_2)}e_1\\
\ad^{\nabla\, (k)}_{e_1\odot\ldots\odot e_k}(e)&=&  \set{e_1,\ldots,e_k,e}_{k+1},\quad k\geq 2,
\\
\ad^{\nabla\, (k)}_{e_1\odot \ldots\odot e_k}(X)&=& \brr{\nabla_X,l_k}(e_1,\ldots, e_k),\quad k\geq 3,
\end{array}\right.
\end{equation*}
where $e,e',e_1,\ldots e_k\in\Gamma(E)$ and $X\in {\mathfrak X}(M)$.
\end{prop}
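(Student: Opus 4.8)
The plan is to build the dual (coadjoint) representation from the abstract coadjoint complex $({\mathfrak X}(E),{\rm ad}_Q)$ via the connection-induced identification ${\mathfrak X}(E)\simeq{\mathcal E}\otimes_{\mathcal O}\Gamma(E\oplus TM)$, read off its Taylor coefficients by computing brackets $[\dots[[Q,i_{e_1}],\dots],i_{e_k}]$ and $[\dots[[Q,i_{e_1}],\dots],\nabla_X]$ modulo higher arity, and then dualize. First I would fix a homogeneous local frame $(e_\lambda)$ of $E$ with dual frame $(\xi^\lambda)$ of $E^*$, and recall from the proof of Proposition~\ref{prop:dual2} that $Q=\sum_k Q^{(k)}$ with $[\![\ldots[\![[\![Q,i_{e_1}]\!],i_{e_2}]\!],\ldots,i_{e_k}]\!]^{(-1)}=(-1)^{k+1}i_{\{e_1,\ldots,e_k\}_k}$ and $[\![Q,i_e]\!]^{(0)}(f)=-\rho(e)[f]$. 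Under the connection identification, the contraction derivation $i_e$ for $e\in\Gamma(E_{-i})$ corresponds to $1\otimes e$, and the derivation $\widehat{\nabla_X}$ (dual of $\nabla_X$ on each $E_{-i}^*$) corresponds to $1\otimes X$; so ${\rm ad}_Q$ applied to $1\otimes e$ or $1\otimes X$, expanded by arity, gives the Taylor coefficients $^*\ad^{\nabla\,(k)}$ of the coadjoint representation directly as the arity-$k$ components of $[Q,i_e]$ and $[Q,\widehat{\nabla_X}]$.

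The key computational steps, in order, are: (1) compute $[Q,i_e]$ for $e\in\Gamma(E_{-1})$, splitting off the arity-$0$ piece (which produces the $-\rho$ term landing in $TM$, i.e.\ $X$-components) and the arity-$k$ pieces (which produce the bracket terms $\{e,-\}_{k+1}$); the subtlety is that $[Q,i_e]$ is a derivation of ${\mathcal E}$, not ${\mathcal O}$-linear when $e$ has degree $-1$, which is exactly why $\nabla$ enters: we must correct $i_e$-contributions to make things ${\mathcal E}$-linear, and this correction is the covariant-derivative term $-(-1)^{|e'|}\nabla_{\rho(e)}e'$ in $\ad^{\nabla\,(1)}_{e'}(e)$; (2) compute $[Q,\widehat{\nabla_X}]$: since $\widehat{\nabla_X}$ is degree $0$ and not ${\mathcal O}$-linear (it differentiates functions via $X$), the commutator with $Q^{(0)}$ produces $[\nabla_X,l_1]$ and $[\rho(e'),X]+\rho(\nabla_Xe')$ type terms, commutator with $Q^{(1)}$ produces the arity-$2$ curvature-like term $[\nabla_X,l_2](e_1,e_2)$ together with $\nabla_{[\rho(e_i),X]+\rho(\nabla_Xe_i)}$ corrections, and commutator with $Q^{(k-1)}$ for $k\ge 3$ produces $[\nabla_X,l_k](e_1,\ldots,e_k)$; (3) assemble these into the dual representation: since $\ad^\nabla$ is \emph{defined} as the dual of the coadjoint representation, apply the duality formula $\langle{}^*\nabla^{(n)}_{e_1\odot\cdots\odot e_n}\alpha,v\rangle+(-1)^{|\alpha|(\ldots+1)}\langle\alpha,\nabla^{(n)}_{e_1\odot\cdots\odot e_n}v\rangle=\rho(e_1\odot\cdots\odot e_n)\langle\alpha,v\rangle$ from the preceding proposition to translate the coadjoint Taylor coefficients (acting on $E^*\oplus T^*M$) into the adjoint Taylor coefficients (acting on $E\oplus TM$), where the sign bookkeeping and the $\rho$-correction term account for the precise formulas stated; (4) verify the underlying differential (arity $0$) is $l_1$ on $E_{-i}$ for $i\ge 2$, $l_1$ into $E_{-1}$, and $-\rho:E_{-1}\to TM$, matching \eqref{eq:diff}, which is immediate from $Q^{(0)}$ and the arity-$0$ part of $[Q,\widehat{\nabla_X}]$ together with Remark~\ref{rem:arity:zero:infty:morphism}.

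I expect the main obstacle to be the sign and Koszul-convention bookkeeping, rather than any conceptual difficulty: one must carefully track how the degree shift $E_{-i}^*$ "of degree $+i$" interacts with the graded commutator $[\![-,-]\!]$ on ${\mathfrak X}(E)$, how the Leibniz rule \eqref{eq:Leibniz} distributes ${\rm ad}_Q$ over products $F\cdot P$, and how the dualization flips signs via $\phi\mapsto-\phi^*$ at the level of the $\End(V)[1]$-morphism. In particular, confirming that the term $-(-1)^{|e'|}\nabla_{\rho(e)}e'$ (and not, say, $+(-1)^{|e|}\nabla_{\rho(e)}e'$) is the one forced by ${\mathcal O}$-linearity of the coadjoint operator, and that the arity-$2$ formula genuinely has the stated combination $\nabla_{[\rho(e_1),X]+\rho(\nabla_Xe_1)}e_2+(-1)^{|e_1|}\nabla_{[\rho(e_2),X]+\rho(\nabla_Xe_2)}e_1$, requires doing the commutator expansion with strict attention to where $\rho$ is extended by zero. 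Once the $k=1,2$ cases are settled, the cases $k\ge 3$ are routine because all brackets become ${\mathcal O}$-linear and $[\nabla_X,l_k]$ is manifestly tensorial. A sanity check I would perform at the end: specialize to a Lie algebroid ($E_{-i}=0$ for $i\ge 2$) and confirm the formulas reduce to the Abad--Crainic adjoint representation \cite{AC}, and specialize to $M$ a point and confirm they reduce to \eqref{eq:adjoint:representation:algebra} (with $\nabla\equiv0$, as all $TM$-terms vanish).
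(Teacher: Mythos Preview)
Your approach is correct and is precisely the one the paper sets up: the paper does not give an explicit proof of this proposition, but defines the coadjoint representation as $\mathrm{ad}_Q$ on $\mathfrak{X}(E)$ transported through the connection-induced isomorphism $\mathfrak{X}(E)\simeq\mathcal{E}\otimes_{\mathcal O}\Gamma(E\oplus TM)$, and the adjoint representation as its dual, so the formulas are obtained exactly by the computation you outline. Your identification of the main difficulty (Koszul sign bookkeeping and the $\mathcal O$-linearity corrections coming from $\rho$) and your proposed sanity checks (reduction to \cite{AC} for Lie algebroids and to \eqref{eq:adjoint:representation:algebra} for $M$ a point) are both on target.
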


%\begin{remark}
%\end{remark}

\begin{ex} For a Lie algebroid $E=E_{-1}$,  the adjoint representation is the adjoint representation (up to homotopy)  constructed in \cite{AC}.
\end{ex}

\subsection{Berezinian bundle}

%The usual operations $\oplus$ and $\otimes$ on vector bundles turn the category $\Rep(E)$ of a representations of a Lie $ \infty$-algebroid $(E,Q,\rho)$ into a semiring. Also, given a Lie $\infty$-algebroid morphism $$\Phi:({E},Q_E\equiv(\set{l_k}_{k\geq 1},\rho))\to ({E'},Q_{E'}\equiv(\set{l_k'}_{k\geq 1},\rho')),$$over the identity of a base manifold $M$, there is a natural pullback semi-ring morphisms$ \Phi^*: \Rep(E') \to \Rep(E)$. Notice that  the pull-back of a representation on graded vector bundle $V$ is again a representation on $V$.
%which maps a representation on $V$.
%
%representations $$\mathcal{D}\in \Rep(E',V)\mapsto \Phi^*\mathcal{D}\in \Rep(E,V),$$ which gives a morphism of
%semirings $\Phi^{*}:\Rep(E',V)\to\Rep(E,V)$.

We define the {\textbf{Berezinian bundle}}  (or ``Berezinian" for short) of a finite dimensional graded bundle $V = \oplus_{i \in {\mathbb Z}} V_i $ over $M$ to be the line bundle 
  $$ {\mathrm{Ber}}(V) :=  \cdots \otimes  \wedge^{\top} V_2^* \otimes  \wedge^{\top} V_1 \otimes \wedge^{\top} V_0^* \otimes \wedge^{\top} V_1 \otimes \wedge^{\top} V_{2}^* \otimes \cdots $$
 with the understanding that $ \wedge^{\top} E$ stands for   $ \wedge^{{\rm{ dim}}(E)} E$ for any finite dimensional vector space $E$. The infinite tensor product above is indeed finite, hence well-defined. 
 %Applied point by point to a graded vector bundle $V$ over $M$, the previous construction defines a line
 %bundle over $M$ that we denote by  $ {\mathrm{Ber}}(V) $ and call the {\textbf{ Berezinian bundle of $V \to M$}}.
 We consider the Berezinian bundle to be
 of degree $0$ (in fact, any degree would work).
 The following lemma is a well-known fact of homotopy theory. 
 %\sum_{i \in {\mathbb Z}} (-1)^{i+1} {\rm rk}(V_i) $.
%  {\green{+1 or not ???}}

\begin{lem}
\label{lem:berezinian}
  A homotopy equivalence between finite dimensional complexes of vector bundles induces a canonical isomorphism of their Berezinian bundles.
\end{lem}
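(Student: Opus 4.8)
The plan is to reduce the statement to the elementary linear-algebra fact that the top exterior power of a based short exact sequence of vector spaces is determined by the maps involved, and then to promote this to complexes and to homotopies using the standard structure theory of complexes of vector bundles. First I would recall the basic observation: for a finite-dimensional complex $(V_\bullet, d)$ of vector spaces over a point, one has a canonical isomorphism $\mathrm{Ber}(V_\bullet) \cong \mathrm{Ber}(H_\bullet(V,d))$, obtained by splitting the complex (non-canonically) as a direct sum of its homology and a sum of ``elementary acyclic'' pieces $0 \to W \xrightarrow{\mathrm{id}} W \to 0$, and checking that each such acyclic piece contributes a canonically trivialized line to the Berezinian (the top form of $W$ in one slot cancels the top form of $W^*$ in the adjacent slot), while the contribution of the homology is manifestly independent of the splitting. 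The point is that although the splitting is not canonical, the resulting isomorphism $\mathrm{Ber}(V_\bullet)\cong\mathrm{Ber}(H_\bullet)$ \emph{is} canonical, because any two splittings differ by an automorphism of the complex that is the identity on homology, and such an automorphism acts trivially on $\mathrm{Ber}$ (its ``super-determinant'' is $1$, as it is unipotent up to block-diagonal terms of determinant cancelling between $V_i$ and $V_i^*$ slots).

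Next I would globalize: for a finite-dimensional complex of vector bundles $(V,D_V)$ over $M$, the homology need not have locally constant rank, so one cannot form $\mathrm{Ber}(H_\bullet)$ as a bundle in general; instead I work directly with the two complexes appearing in a homotopy equivalence. Let $\Phi^* \colon (V,D_V) \to (W,D_W)$ and $\Psi^*$ be the two chain maps, with chain homotopies $H, H'$ witnessing $\Psi^*\Phi^* \simeq \mathrm{id}$ and $\Phi^*\Psi^* \simeq \mathrm{id}$. The standard fact (e.g. the mapping-cylinder argument, or Lemma-style statements in \cite{LLS}) is that such a homotopy equivalence can be upgraded, \emph{locally on $M$} and then globally by a partition-of-unity / uniqueness argument, to a \emph{deformation retract}: after adding a contractible complex to one side, $\Phi^*$ becomes a split surjection with contractible kernel (a sum of elementary acyclic bundle complexes). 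Equivalently, one shows $V \cong W \oplus C$ and $W \cong V \oplus C'$ with $C, C'$ contractible complexes of vector bundles, compatibly with the maps. Then I invoke the first step fibrewise, but now in the bundle setting: a contractible complex of vector bundles $0 \to C_{-n} \xrightarrow{\mathrm{id}} \cdots$ has canonically trivial Berezinian by the same cancellation of $\wedge^{\top}C_i$ against $\wedge^{\top}C_i^*$ in adjacent tensor slots, and $\mathrm{Ber}$ is monoidal for direct sums: $\mathrm{Ber}(V \oplus C) \cong \mathrm{Ber}(V) \otimes \mathrm{Ber}(C) \cong \mathrm{Ber}(V)$ canonically. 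Composing, $\mathrm{Ber}(V) \cong \mathrm{Ber}(W)$ canonically.

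The final point to nail down is \emph{canonicity} — that the isomorphism does not depend on the chosen homotopy data, which is what makes the word ``canonical'' in the statement meaningful and is the real content. I would argue this by showing that any two such isomorphisms $\mathrm{Ber}(V) \to \mathrm{Ber}(W)$ differ by an automorphism of $\mathrm{Ber}(W)$, i.e. by a nowhere-vanishing function on $M$, and that this function is forced to be $1$: homotopic chain maps induce the same map on $\mathrm{Ber}$ because a chain homotopy $K$ between $\mathrm{id}$ and an automorphism $u$ of a complex exhibits $u = \mathrm{id} + (D K + K D)$ as lying in the identity component of the group of chain automorphisms, and the induced map on the line bundle $\mathrm{Ber}$ is continuous in $u$ and multiplicative, hence constantly $1$ on that component. (Concretely: filter by the homotopy $t \mapsto \mathrm{id} + t(DK+KD)$ and observe the super-determinant is identically $1$ since $DK+KD$ is nilpotent modulo terms that cancel between dual slots.)

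The main obstacle I anticipate is the globalization in the middle step: passing from a fibrewise/local splitting of the homotopy equivalence as ``identity plus contractible'' to a genuine global decomposition $V \cong W \oplus (\text{contractible})$ of complexes of \emph{vector bundles} over all of $M$. Fibrewise the ranks of the homology jump, so one cannot naively split; one must instead argue at the level of the equivalence data $(\Phi^*, \Psi^*, H, H')$, which \emph{is} globally defined, and show that the mapping cone of $\Phi^*$ (a complex of vector bundles, honestly locally constant rank) is globally contractible as a complex of vector bundles, with a contraction that can be built from the homotopy data — this is precisely the kind of statement the authors attribute to \cite{LLS}, so in the write-up I would cite it rather than reprove it, and then the Berezinian computation is pure formal nonsense with the cancellation lemma for contractible complexes.
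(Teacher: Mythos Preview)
The paper does not prove this lemma: it is introduced as ``a well-known fact of homotopy theory'' and left without argument. So there is no paper proof to compare against; I assess your outline on its own.

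Your approach is correct and standard. The cleanest path is the one you reach at the end: the mapping cone $\mathrm{Cone}(\Phi)$ is an honest complex of vector bundles over $M$; the homotopy-equivalence data $(\Psi,H,H')$ assemble into an explicit global contracting homotopy of it; a contractible complex of vector bundles has canonically trivial Berezinian (this is the classical ``determinant of an acyclic complex'' statement, proved by splitting into elementary acyclic pieces and noting the result is independent of the splitting); and since $\mathrm{Ber}(\mathrm{Cone}(\Phi))\cong \mathrm{Ber}(V)^{-1}\otimes\mathrm{Ber}(W)$ as graded line bundles, one obtains the desired isomorphism.

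Two remarks. First, drop the intermediate claim ``$V\cong W\oplus C$ with $C$ contractible'': it is simply false in general (take $V=0$ and $W$ an elementary acyclic two-term complex), and even the stabilized version is awkward to set up globally. You correctly diagnose this as the weak point and correctly replace it by the cone argument; just go there directly. Second, your canonicity argument is slightly off: $DK+KD$ need not be nilpotent. The clean statement is that the canonical trivialization of $\mathrm{Ber}$ of an acyclic complex depends only on the differential, not on any choice of contracting homotopy; hence the isomorphism $\mathrm{Ber}(V)\to\mathrm{Ber}(W)$ depends only on $\Phi$. If $\Phi$ and $\Phi'$ are homotopic via $L$, then $(v,w)\mapsto(v,w+L(v))$ is a unitriangular chain isomorphism $\mathrm{Cone}(\Phi)\to\mathrm{Cone}(\Phi')$ of Berezinian $1$, so the induced maps agree. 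Alternatively, argue fibrewise via the canonical identification $\mathrm{Ber}(V_m)\cong\mathrm{Ber}(H_\bullet(V_m))$ together with the fact that homotopic chain maps induce the same map on homology.
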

  
  Let us describe sections of this bundle. 
  %Recall that for any vector bundle $V$ over $M$, 
  %a nowhere vanishing section $\mu $ of  $\wedge^{\top} V $ induces a natural section $\mu^* $ of $\wedge^{\top} V^* $: by construction $ \langle \mu^* , \mu \rangle = 1 $.
  Given a family $\mu:=(\mu_{i})_{i \in \mathbb Z} $, with $\mu_{i}$ a nowhere vanishing section of $\wedge^{\top} V_{i} $ (defined in some open subset $\mathcal U \subset M$), we denote by $\mu_{2i}^* $
  the dual nowhere vanishing section of $\wedge^{\top} V_{2i}^*$.
   %and $\mu_i $.
  %a section of 
  %$\wedge^{top} V_{2i+1} $.
  %
  %and a family $T:=(T_{2i+1})_{i \in \mathbb Z} $, with $T_{2i+1}$ a section of $\wedge^{top} V_{2i+1} $, 
  The tensor product:
   \begin{equation}\label{eq:defmu}  \mu :=  \cdots   \otimes \mu_{-2}^* \otimes   \mu_{-1}  \otimes  \mu_0^* \otimes \mu_1 \otimes  \mu_2^* \cdots
   \end{equation}
  is a nowhere vanishing section $ {\mathrm{Ber}}(V) $ on $\mathcal U \subset M $.

  \begin{thm}
  \label{thm:CharacteristicClass}
  For any representation $(\nabla^{(n)})_{n \geq 0}$ of a Lie $ \infty$-algebroid $(E,Q\equiv(\set{l_k}_{k\geq 1},\rho))$ on a finite dimensional graded vector bundle $V$, the  Berezinian bundle
  ${\mathrm{Ber}}(V)$ comes equipped with a natural representation structure whose unique non-vanishing Taylor coefficient is, for all $e \in \Gamma(E_{-1})$, given by: 
  % $$ \nabla^{\mathrm{Ber}}_e   {\mathrm{Ber}}_{\tau,T}  = \left( \sum_{i \in \mathbb Z} (-1)^i {\rm{div}}  D_1 ( e, \mu_i ) \right) \,   {\mathrm{Ber}}_{\tau,T} $$
  \begin{equation}\label{eq:nablaBer} \nabla^{\mathrm{Ber}}_e   \mu = \left(  \sum_{i \in \mathbb Z} (-1)^i {\rm{div}}_{\mu_i} (\nabla^{(1)}_e )  \right)  \,  \mu,  \end{equation}
for every $\mu $ as in \eqref{eq:defmu}, where  ${\rm{div}}_{\mu_i}$ is defined by the relation
   $$ \sum_{k=1}^d  v_1 \wedge \dots \wedge \left( \nabla^{(1)}_e v_k \right) \wedge \dots \wedge v_d   =  {\rm{div}}_{\mu_i}\left(\nabla^{(1)}_e \right)  \, \, \, v_1 \wedge \dots \wedge v_d   $$
   for every nowhere vanishing local sections $ v_1 , \dots, v_d$ of  $V_i $ such that $ v_1 \wedge \dots \wedge  v_d = \mu_i$ is a section of $\wedge^{\top} V_i$.
  \end{thm}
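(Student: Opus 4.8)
The plan is to construct the claimed representation on $\mathrm{Ber}(V)$ directly, exhibiting its Chevalley–Eilenberg differential, and then to verify flatness by a conceptual homotopy-theoretic argument rather than by a direct computation. First I would observe that, by Proposition \ref{prop:1to1}, giving a representation of $(E,Q\equiv(\set{l_k}_{k\geq 1},\rho))$ on the line bundle $\mathrm{Ber}(V)$ amounts to giving a degree $+1$ operator $\mathcal D^{\mathrm{Ber}}$ on $\Gamma(S(E^*)\otimes \mathrm{Ber}(V)^*)$ satisfying the Leibniz rule over $Q$ and squaring to zero; equivalently, by the Taylor-coefficient description, a family $(\nabla^{\mathrm{Ber}\,(k)})_{k\geq 0}$ obeying Equations \eqref{eq:FlatMeans2}. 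The first point is that $\mathrm{Ber}(V)$ is concentrated in degree $0$ and is a line bundle, so for degree reasons every Taylor coefficient $\nabla^{\mathrm{Ber}\,(k)}$ with $k\neq 1$ must vanish (a degree $+1$ endomorphism of a degree-$0$ line bundle is zero, and $\nabla^{\mathrm{Ber}\,(0)}=D_{\mathrm{Ber}(V)}=0$), while $\nabla^{\mathrm{Ber}\,(1)}_e$ can be nonzero only for $e\in\Gamma(E_{-1})$, since $\nabla^{(1)}_e$ has degree $+1$ and must send the degree-$0$ line $\mathrm{Ber}(V)$ to itself, forcing $|e|=-1$. This already pins down the \emph{shape} of any such representation and shows it is determined by a single $\mathcal O$-linear assignment $e\mapsto \nabla^{\mathrm{Ber}}_e\in\mathcal O$ for $e\in\Gamma(E_{-1})$.

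Next I would define $\nabla^{\mathrm{Ber}}_e$ by formula \eqref{eq:nablaBer} and check it is well defined: one must verify that $\sum_i(-1)^i\mathrm{div}_{\mu_i}(\nabla^{(1)}_e)$ does not depend on the choice of the local sections $\mu_i$, which is immediate since rescaling $\mu_i\mapsto f\mu_i$ leaves $\mathrm{div}_{\mu_i}(\nabla^{(1)}_e)$ unchanged (the divergence of a degree-preserving operator with respect to a volume form is insensitive to rescaling), and that the resulting operator on sections of $\mathrm{Ber}(V)$ satisfies the connection axioms of Definition \ref{def:LieInftyOidConnection}, in particular $\nabla^{\mathrm{Ber}\,(1)}_e(f\mu)=f\nabla^{\mathrm{Ber}\,(1)}_e\mu+\rho(e)[f]\,\mu$. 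This last identity is a routine consequence of the product rule for $\nabla^{(1)}_e$ together with the Leibniz property of $\mathrm{div}_{\mu_i}$; I would not grind through it. The equivalent, cleaner formulation is that $\nabla^{\mathrm{Ber}}_e$ is precisely $\sum_i(-1)^i$ times the trace of $\nabla^{(1)}_e$ on $V_i$, i.e. $\str(\nabla^{(1)}_e)$, relativized over $\mathcal O$; writing it this way will make the flatness check transparent.

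The remaining point is flatness, $(\mathcal D^{\mathrm{Ber}})^2=0$. Since all higher Taylor coefficients vanish, Equation \eqref{eq:FlatMeans2} collapses: for $n=2$ and $e_1,e_2\in\Gamma(E_{-1})$ it reads $\nabla^{\mathrm{Ber}}_{l_2(e_1,e_2)}=\{\nabla^{\mathrm{Ber}}_{e_1},\nabla^{\mathrm{Ber}}_{e_2}\}$, which since $\mathrm{Ber}(V)$ is a line bundle in degree $0$ becomes $\nabla^{\mathrm{Ber}}_{l_2(e_1,e_2)}=\rho(e_1)[\nabla^{\mathrm{Ber}}_{e_2}]-\rho(e_2)[\nabla^{\mathrm{Ber}}_{e_1}]$, i.e. exactly the flatness of a genuine line-bundle (Lie algebroid type) connection; and for $n=3$ with one entry in $E_{-2}$ one gets a compatibility with $l_1$. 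I expect the main obstacle to be precisely this verification, and the efficient way around a brute-force calculation is the following conceptual argument: the flatness Equations \eqref{eq:FlatMeans2} for the original representation on $V$ say that $\nabla^{(\bullet)}$ defines a Lie $\infty$-algebroid morphism from $E$ to the Atiyah differential graded Lie algebroid $\mathrm{CDO}(V)$; composing with the supertrace map $\mathrm{CDO}(V)\to\underline{\mathcal O}$ (the ``determinant'' morphism, $\delta\mapsto\str\delta$ on $A_{-1}$, and zero on $A_{-i}$, $i\geq 2$, with anchor $\rho$), which is a morphism of differential graded Lie algebroids, yields a Lie $\infty$-algebroid morphism from $E$ to the Atiyah algebroid $\mathrm{CDO}(\mathrm{Ber}(V))$ of the line bundle $\mathrm{Ber}(V)$; by the proposition preceding Theorem \ref{thm:CharacteristicClass} this morphism \emph{is} a flat connection, and unwinding the composition gives exactly \eqref{eq:nablaBer}. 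The two facts to check here are that $\delta\mapsto\str\delta$ is compatible with the DGLA brackets and the differential $[D_V,-]$ — the former because the supertrace of a graded commutator vanishes (used already in the proof of Proposition \ref{prop:divergence}), the latter because $\str[D_V,\delta]=0$ for the same reason — and that it is $\rho$-equivariant on the anchor, which is clear. This reduces the whole statement to these two one-line observations, and uniqueness of the Taylor coefficient follows from the degree count in the first paragraph.
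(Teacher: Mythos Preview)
Your overall strategy is sound and the conceptual flatness argument is attractive, but there is a genuine error in the well-definedness step. You claim that rescaling $\mu_i\mapsto f\mu_i$ leaves $\mathrm{div}_{\mu_i}(\nabla^{(1)}_e)$ unchanged. This is false: for $e\in\Gamma(E_{-1})$ the operator $\nabla^{(1)}_e$ is a genuine covariant differential operator with symbol $\rho(e)$, not an $\mathcal O$-linear endomorphism, and one has
\[
\mathrm{div}_{f\mu_i}(\nabla^{(1)}_e)=\mathrm{div}_{\mu_i}(\nabla^{(1)}_e)+\rho(e)[\ln|f|].
\]
So the alternating sum $\sum_i(-1)^i\mathrm{div}_{\mu_i}(\nabla^{(1)}_e)$ \emph{does} depend on the $\mu_i$. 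The point you are missing is that well-definedness and the Leibniz rule are not two separate checks here: the transformation law above is precisely what makes \eqref{eq:nablaBer} patch into a connection on $\mathrm{Ber}(V)$, since changing $\mu_i\mapsto f_i\mu_i$ multiplies $\mu$ by $g=\prod_i f_i^{(-1)^{i+1}}$ and the resulting shift is exactly $\rho(e)[\ln|g|]$. This is what the paper establishes via the identity $\str\omega'-\str\omega=Q[\ln|\mathrm{Ber}(A)|]$. Ironically, your own conceptual argument would have bypassed this pitfall: the induced map $\mathrm{CDO}(V)\to\mathrm{CDO}(\mathrm{Ber}(V))$ is a well-defined Lie algebroid morphism independently of any trivialization, so composing with $e\mapsto\nabla^{(1)}_e$ gives a well-defined connection on $\mathrm{Ber}(V)$ automatically.

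Once that is repaired, your route diverges from the paper's in the flatness step. The paper works in a local frame, introduces the connection functions $\omega_i^j$ via $\mathcal D\alpha_i=\sum_j(-1)^{|\alpha_i|(|\alpha_j|+1)}\omega_i^j\otimes\alpha_j$, and shows by a direct index manipulation that $Q(\str\omega)=-Q(\str\omega)$, hence $\str\omega$ is $Q$-closed; this simultaneously delivers $\nabla^{\mathrm{Ber}}_{l_1(e)}=0$ and $\nabla^{\mathrm{Ber}}_{\{e_1,e_2\}}=[\nabla^{\mathrm{Ber}}_{e_1},\nabla^{\mathrm{Ber}}_{e_2}]$. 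Your argument instead factors the representation $E\to\mathrm{Atiyah\ DGLA}(V,D_V)$ through the strict DGLA morphism to $\mathrm{CDO}(\mathrm{Ber}(V))$, reducing flatness to the single observation $\str[D_V,\phi]=0$ (and $\str[\delta_1,\delta_2]=0$) already used in Proposition~\ref{prop:divergence}. This is more conceptual and avoids the sign bookkeeping, at the cost of invoking the correspondence between representations and Lie $\infty$-morphisms to the Atiyah DGLA, which the paper has set up but does not exploit in its own proof.
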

    \begin{proof}
  %Consider now  a representation $(\nabla^{(n)})_{n \geq 1} )$ of a Lie $ \infty$-algebroid $(E,Q,\rho)$ on a finite dimensional graded vector bundle $V$, the  Berezinian bundle
  %${\mathrm{Ber}}(V)$ comes equipped with a natural representation structure whose unique non-vanishing Taylor coefficient is for all $e \in \Gamma(E_{-1})$: 
%\begin{remark} If $(\mathcal{D}_V,V)$ and $(\mathcal{D}_W,W)$ are isomorphic representations of a Lie $\infty$-algebroid $(E,Q)$ then $V$ and $W$ are isomorphic graded vector bundles.
%\end{remark}
%Let $\nabla=(\nabla^{(n)})_{n\geq 1}$ be a representation of the  Lie $\infty$-algebroid $(E,Q,\rho)$  on a graded vector bundle $V\to M$.
For each trivialization  $(v_i)_{i \in I}$  of $\Gamma(V)$ by homogeneous sections of $V$, consider $(\al_i)_{i\in I}$ the dual trivialization of $\Gamma(V^*)$.  The \textbf{connection functions} $\omega_i^j\in \mathcal{E}$ of $\nabla$ are defined by
%$$\omega_i^j(e)=\eval{\al_i,\nabla^{(1)}_e v_j},\quad e\in E,$$
%or, equivalently, 
in terms of the Chevalley-Eilenberg differential $\mathcal{D}$ by:
$$
\mathcal{D}\al_i=\sum_{j \in I} (-1)^{|\al_i|(|\al_j|+1)}\omega_i^j\otimes \al_j .
$$
Notice that, since $\mathcal{D}$ has degree $+1$, each connection function has degree $|\omega_i^j|=1+|v_i|-|v_j|$.
In particular, $\omega_i^i $ has arity $1$ (i.e. is a section of $(E_{-1})^*$ for all index $i$), and is defined by:
 \begin{equation}
 \label{eq:omegaii}
 \omega_i^i(e)=\eval{\al_i,\nabla^{(1)}_e v_i},\quad  e\in \Gamma(E_{-1}) .\end{equation}
In particular $\str \omega = \sum_{i\in I} (-1)^{|\al_i|} \omega_i^i $ has arity $+1$  and degree $+1$. 

Let $\mu$ be the section of $\mathrm{Ber}(V)$ defined by the chosen local trivialization of $\Gamma(V)$. Then Equation \eqref{eq:nablaBer} reads:
\begin{equation}
\label{eq:Div=Supertrace}
 \sum_{i \in \mathbb Z} (-1)^i {\rm{div}}_{\mu_i} (\nabla^{(1)}_e ) =\str \omega(e) .   
\end{equation}

Let us consider another (local) trivialization $(v_i')_{i \in I}$ such that $v'=Av$, for some matrix of local smooth functions $A=(A_i^j)_{i,j \in I}$. The coefficient $A_i^j$ of this matrix is zero if $|v_i| \neq |v_j| $, so that $A$ is a block  diagonal matrix with blocks $(A_n)_{n \in \mathbb Z}$.
 A direct computation gives that the connection functions $\omega' $ and $\omega $ for these trivializations are related by (using Einstein's convention):
 $$
 {\omega'_i}^j=(A^{-1})_p^i\omega_k^p\, A_j^k -Q(A_j^k)(A^{-1})_k^i. 
 $$
 Since $A_{i}^{j}=0$ when the degrees of $v_i$ and $v_j$ are different,
 we have, by invariance of the trace:
 $$  \sum_{i \in I \hbox{ s.t. } |v_i|=j} (A^{-1})_{p}^{i}\omega_k^p A_j^k= \sum_{i \in I \hbox{ s.t. } |v_i|=j} \omega_i^i$$
 so that:
 \begin{equation}\label{eq:omegaomega} \str\omega' - \str\omega = -(-1)^{|v_i|}Q(A_{i}^{k})(A^{-1})_k^{i}=\frac{Q[({\rm {Ber}(A)]}}{{\rm{ Ber}}(A)}= Q[{\rm{ln}} (|{\rm{Ber}}(A)|)].
 \end{equation}
Let us use Equations \eqref{eq:Div=Supertrace} and \eqref{eq:omegaomega} to show that \emph{(i)}
$\nabla^{Ber} $ is well-defined and \emph{(ii)} is a Lie $\infty $-algebroid connection which \emph{(iii)} is flat.

For two different trivializations as above,
the associated sections of $\mathrm{Ber}(V)$ are related by
$$
\mu'=\mathrm{Ber}(A)\,\mu,
$$
where $\mathrm{Ber}(A)= \frac{\prod_k {\mathrm{det}} A_{2k+1}}{ \prod_k {\mathrm{det}} A_{2k}}$.
Equation \eqref{eq:omegaomega} implies that 
$$ \str\omega' \, (e) - \str\omega \, (e)   =  \rho(e) [{\rm{ln}} (|{\rm{Ber}}(A)|)],  \quad  e \in \Gamma(E_{-1}),$$
 This implies \emph{(i)}
and proves that $\nabla^{\mathrm{Ber}}_e$ is well-defined. It also implies \emph{(ii)}, 
i.e. that $\nabla^{\mathrm{Ber}}_e$ is  $\mathcal{O} $-linear in $E$ while 
$\nabla^{Ber}_e f \, \mu = f  \nabla^{Ber}_e\mu+ \rho(e)[f]\, \mu,$
for all  $f\in\mathcal{O}$.

To prove \emph{(iii)}, we will use the  flatness of $\mathcal{D}$. For each $i\in I$,
\begin{equation*}
\mathcal{D}^2\al_i=\sum_j (-1)^{|\al_i|(|\al_j|+1)}Q(\omega_i^j)\otimes \al_j + (-1)^{1+(|\al_i|+|\al_j|)|\al_k|}\omega_i^k\odot \omega_{k}^j \otimes \al_j=0,
\end{equation*}
 and, consequently,
 \begin{equation}
 \label{eq:CartanRepr}
 Q(\omega_i^j)=\sum_k (-1)^{(|\al_i|+|\al_j|)|\al_k|+|\al_i|(|\al_j|+1)}\omega_i^k\odot \omega_k^j.
 \end{equation}
 In particular,
 \begin{align*}
Q(\str \omega)&=
 \sum_i(-1)^{|\al_i|} Q(\omega_i^i) \\ &=\sum_{i,k} (-1)^{|\al_i|}\omega_i^k\odot \omega_k^i \hbox{ by Eq. (\ref{eq:CartanRepr})}\\ &
=\sum_{i,k} (-1)^{|\al_i|}(-1)^{(1+|\al_i|-|\al_k|)}\omega_k^i\odot \omega_i^k \hbox{ for degree reason} \\
&=\sum_{i,k} (-1)^{1+|\al_k|}\omega_k^i\odot \omega_i^k=-\sum_k(-1)^{|\al_k|} Q(\omega_k^k)= -Q(\str \omega).
 \end{align*}
 This implies that
 $\str \omega = \sum_{i \in I} (-1)^{|v_i|} \omega_i^i $
  is a $Q$-closed element in ${\mathcal E}_1$.  In view of flatness conditions  (\ref{itm:first})-(\ref{itm:third}) in page \pageref{itm:first}, this is equivalent to $\nabla^{\mathrm{Ber}}$ being a representation:
  \begin{align*}
      \nabla^{\mathrm{Ber}}_{l_1(e)}&=0\\
      \nabla^{\mathrm{Ber}}_{\set{e_1,e_2}}&=\left[\nabla^{\mathrm{Ber}}_{e_1},\nabla^{\mathrm{Ber}}_{e_2}\right],
  \end{align*}
  for $e_1,e_2\in \Gamma(E_{-1})$.

%on all intersections to globally define a connection on $\mathrm{Ber}(V)$.
\end{proof}

\section{The modular class of a Lie ${\infty}$-algebroid} \label{LieInftyModular}

\subsection{Some definitions and computations}

  The Berezinian bundle of a representation of a Lie $\infty$-algebroid $(E,Q\equiv(\set{l_k}_{k\geq 1},\rho) )$ has two characteristics: \emph{(i)} it is concentrated in some degree, and \emph{(ii)}  it has rank $1$ in this degree. This implies that only its first Taylor coefficient may be non-zero, and only when applied to a section of $E_{-1} $. Given now a {\textbf{rank-$1$ representation}} $ B \to M$, i.e. a representation that admits these two characteristics, denote by $(e,b) \mapsto \nabla^{B}_e b $ its Taylor coefficient.

From now on, we work in smooth differential geometry, i.e. $\mathcal O = C^\infty(M)$. Assume there exists a nowhere vanishing section $b$ of $B$,  then the section $\omega_b \in \Gamma(E_{-1}^*) = \mathcal E_{+1} $ defined by:
   $$  \nabla^{B}_e b = \omega_b(e) \, b $$
$Q$-closed function on $E$ i.e.:
   $$ \omega_b(l_2(e_1,e_2)) - \rho(e_1) [ \omega_b(e_2)] + \rho(e_2) [\omega_b(e_1)] =0  \hbox{ and } \omega_b( l_1 (\tilde e)) =0, $$
   for any $e_{1},e_{2} \in \Gamma(E_{-1})$, $\tilde e \in \Gamma(E_{-2})$.
   Also, it is easily checked that for every   nowhere vanishing smooth function $\lambda \in \mathcal{O}$:
    $$  \omega_{\lambda b} (e) = \omega_{b} (e) + \frac{\rho(e) [\lambda]}{\lambda}  = \omega_{b} (e) +  \rho(e)  [\mathrm{ln}(|\lambda|) ] $$
    or, equivalently, $ \omega_{\lambda b} =\omega_{ b} +   Q_E [\mathrm{ln}(|\lambda|) ]$. As a consequence, the class of $\omega_b  \in H^1(E,Q) $ does not depend on the choice of $b$. If there is no non-vanishing section because the rank $1$ bundle $B$ is not trivial, then we can consider the representation $B \otimes B $, which is now trivial as a rank $1$ bundle, and consider one-half of the class defined above. We call this class the {\textbf{characteristic class of the rank-$1$ representation~$B$}}. 
    
    \begin{defn} 
        We call {\textbf{characteristic class of a representation}} of a Lie $\infty $-algebroid the characteristic class of its Berezinian bundle (defined in  Theorem \ref{thm:CharacteristicClass}).
%        
%        
%        We call {\textbf{modular class of a Lie $\infty $-algebroid}} the modular class of its adjoint representation (defined in Section \ref{sec:adjointcoadjoint}).
    \end{defn}

Let $(E,Q\equiv(\set{l_k}_{k\geq 1},\rho))$ be a Lie $\infty$-algebroid and $\nabla= (\nabla^i)_{i \geq 1}$ a connection on  $E$. The adjoint representation $\ad^\nabla$  (see Section \ref{sec:adjointcoadjoint}) has a characteristic class.
%(\ref{def:secondary:characteristic:class}). 
Since different connections define isomorphic representations (see Remark \ref{rmk:isomorphicrepresentation}), we have:

\begin{prop}
\label{prop:isorepres}
	Let $ (E,Q\equiv(\set{l_k}_{k\geq 1},\rho)) $ be a Lie $\infty$-algebroid.
The characteristic class of the adjoint representation  with respect to a connection $\nabla$ is independent of the chosen connection.
\end{prop}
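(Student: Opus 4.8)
The plan is to reduce everything to Remark~\ref{rmk:isomorphicrepresentation}, which says that for two connections $\nabla,\nabla'$ on $E$ the associated adjoint representations $\ad^\nabla$ and $\ad^{\nabla'}$ are canonically isomorphic as representations of $(E,Q\equiv(\set{l_k}_{k\geq 1},\rho))$. So it suffices to prove the general fact that \emph{the characteristic class of a representation depends only on its isomorphism class}; more precisely, that an isomorphism $\Psi\colon(V,\mathcal D_V)\xrightarrow{\ \sim\ }(W,\mathcal D_W)$ of representations induces an isomorphism $\mathrm{Ber}(V)\simeq\mathrm{Ber}(W)$ of their Berezinian bundles \emph{as rank-$1$ representations}. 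Granting this, the two $Q$-closed functions $\omega_{\mu^V},\omega_{\mu^W}$ computing the two characteristic classes differ, on any common nowhere-vanishing local section, by a $Q$-exact term (as in the discussion preceding the definition of the characteristic class of a representation), so the two classes in $H^1(E,Q)$ coincide; applied to the isomorphism $\ad^\nabla\simeq\ad^{\nabla'}$, this gives the statement.

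To set up the general fact, I would first isolate the arity-$0$ component $\psi_0\colon V\to W$ of $\Psi$: it is a degree-$0$ vector bundle morphism, and it is invertible since $\Psi$ is invertible and triangular with respect to arity. Hence it induces a line bundle isomorphism $\mathrm{Ber}(\psi_0)\colon\mathrm{Ber}(V)\to\mathrm{Ber}(W)$. Next I would fix a local frame $(v_i)$ of $V$ by homogeneous sections, put $w_i:=\psi_0(v_i)$ (a local frame of $W$), and let $\mu^V,\mu^W$ be the sections of $\mathrm{Ber}(V),\mathrm{Ber}(W)$ associated to these frames, so that $\mathrm{Ber}(\psi_0)(\mu^V)=\mu^W$. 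By \eqref{eq:nablaBer} and \eqref{eq:Div=Supertrace}, checking that $\mathrm{Ber}(\psi_0)$ intertwines $\nabla^{\mathrm{Ber},V}$ and $\nabla^{\mathrm{Ber},W}$ comes down to the identity $\str\omega^V(e)=\str\omega^W(e)$ for all $e\in\Gamma(E_{-1})$, where $\omega^V,\omega^W$ are the connection functions of $\mathcal D_V,\mathcal D_W$ in the frames $(v_i),(w_i)$, in the notation of the proof of Theorem~\ref{thm:CharacteristicClass}.

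The heart of the matter is that this identity is forced by $\Psi\circ\mathcal D_W=\mathcal D_V\circ\Psi$ together with the negative grading of $E$. Writing, in the dual frames, $\Psi(\beta_i)=\alpha_i+\sum_j C_i^j\otimes\alpha_j$ with $C_i^j\in\Gamma(S^{\geq1}(E^*))$ of degree $|v_j|-|v_i|$, the positivity of the grading of $E^*$ makes $\Gamma(S^{\geq1}(E^*))$ vanish in non-positive degrees; in particular $C_i^i=0$ and every non-zero $C_i^j$ has degree, hence arity, at least $1$. Comparing the coefficients of $\otimes\alpha_i$ on the two sides of $\Psi\circ\mathcal D_W=\mathcal D_V\circ\Psi$, and using $\mathcal D_V\alpha_i=\sum_j(-1)^{|\alpha_i|(|\alpha_j|+1)}\omega_i^j(V)\otimes\alpha_j$ and similarly for $W$, one gets (the diagonal signs being trivial)
\[
\omega_i^i(V)+\sum_{j\neq i}\pm\,C_i^j\odot\omega_j^i(V)=\omega_i^i(W)+\sum_{j\neq i}\pm\,\omega_i^j(W)\odot C_j^i .
\]
Each $\omega_i^i$ is of degree $1$, hence of arity exactly $1$, while each product $C_i^j\odot\omega_j^i$ with $j\neq i$ has arity at least $2$ (both factors have positive arity, since $C_i^j\neq0$ forces $\deg\omega_j^i\geq2$). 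Extracting the arity-$1$ component therefore yields $\omega_i^i(V)=\omega_i^i(W)$ for every $i$, and summing with the supertrace signs gives $\str\omega^V=\str\omega^W$.

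The step I expect to be the main obstacle is exactly this last one: a morphism of representations is only $\Gamma(S(E^*))$-linear, not a vector bundle map, so one must control the higher-arity "corrections" $C_i^j$ of $\Psi$ and show they are invisible to the relevant (arity-$1$, degree-$1$) piece of the supertrace of the connection form — and it is precisely the negativity of the grading of $E$ that makes these corrections land in arity $\geq2$. The remaining verifications (invertibility of $\psi_0$, functoriality of $\mathrm{Ber}$, and the passage from "isomorphic rank-$1$ representations" to "equal characteristic class", which is already contained in the computation \eqref{eq:omegaomega} of Theorem~\ref{thm:CharacteristicClass}) are routine bookkeeping.
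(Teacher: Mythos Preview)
Your proposal is correct and follows the same reduction as the paper: both invoke Remark~\ref{rmk:isomorphicrepresentation} to say that two choices of connection give isomorphic adjoint representations. The paper then stops, treating ``isomorphic representations have the same characteristic class'' as evident; you go further and actually prove this, which is a genuine (and correct) addition.

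Your supplementary argument is sound. The key point is exactly the one you isolate: because $E$ is negatively graded, $\Gamma(S^{\geq 1}(E^*))$ lives in strictly positive degrees, so the higher-arity corrections $C_i^j$ of the isomorphism $\Psi$ are nonzero only when $|v_j|>|v_i|$, forcing the companion connection function $\omega_j^i$ (or $\omega_i^j$) to have degree $\geq 2$ and hence arity $\geq 1$; thus every cross term $C_i^j\odot\omega_j^i$ or $\omega_i^j\odot C_j^i$ has arity $\geq 2$ and cannot contribute to the arity-$1$ equation $\omega_i^i(V)=\omega_i^i(W)$. One small remark: in the specific case at hand (two adjoint representations on the same bundle $E\oplus TM$), the isomorphism coming from Remark~\ref{rmk:isomorphicrepresentation} has arity-$0$ part equal to the identity (the two identifications $\mathcal E\otimes_{\mathcal O}\Gamma(E\oplus TM)\simeq\mathfrak X(E)$ agree on $\Gamma(E)$ and differ on $\mathfrak X(M)$ only by the $\mathcal O$-linear tensor $\nabla-\nabla'$, which lands in arity $\geq 1$). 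So $\psi_0=\mathrm{id}$ and $\mathrm{Ber}(\psi_0)=\mathrm{id}$, which slightly simplifies the bookkeeping --- but your general argument covers this and more.
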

%{\tiny{%\begin{comment} (proof without using the isomorphism)
%\begin{proof} Let $(U, e_1,\ldots,e_n, X_1,\ldots,X_k)$ be a local basis of $\Gamma(E\oplus TM)$.
%Any local characteristic cocycle $\str \omega$ is a degree $1$ function and, since $E$ only has negative degrees, it is an element of  $\Gamma_U(E_{-1}^*)$.
%This way, we see
%$$\str \omega (e)=\str \ad^{\nabla\, (1)}_e.$$

 %On the other hand,  notice that, for each $e\in\Gamma(E_{-1})$,
%$\displaystyle \tr (\rho\smalcirc \nabla e-\nabla e\smalcirc \rho)=0$, so
%\begin{align*}
%\str \omega(e)&= \tr \brr{\rho(e), \, - \, } + \str\set{e,\, - \,}_2,
%\end{align*}
%does not depend on the connection $\nabla$ chosen.
%\end{proof}
%}}
%\end{comment}

\noindent
By Proposition \ref{prop:isorepres}, the following definition makes sense.

\begin{defn}
The \textbf{modular class of a Lie $\infty$-algebroid} $(E,Q\equiv(\set{l_k}_{k\geq 1},\rho))$ is the characteristic class of any of its adjoint representations.
When the modular class is zero,  we say that the Lie ${\infty}$-algebroid is {\textbf{unimodular}}.
\end{defn}

Let us give  a concrete description of the modular class.
%Let $\nu$ be a volume forms on $M$ and let $ \mu_i $ local trivializations of $\wedge^{top} E^*_{-i} $.
Let $X_1, \dots,X_n$ and $e_1^{(i)}, \dots,e_{a_i}^{(i)}$ be local trivializations of $TM$
and each one of the vector bundles $E_{-i}$, over some open subset of $M$.
%such that
%$\langle \nu, X_1 \wedge \dots  \wedge X_n \rangle =1 $ and $\langle \mu_i , e_1^{(i)} \wedge \dots \wedge e_{a_i}^{(i)}\rangle =1 $.

For every $Y \in {\mathfrak X}(M)$, we define ${\rm div}(Y)$ to be the unique function that satisfies:
   $$ [Y,X_1 \wedge \dots  \wedge X_n ]  = {\rm div}(Y) X_1 \wedge \dots  \wedge X_n$$
   and we define ${\rm{div}^{(i)} (e)} $ to be, for all $i \geq 1$ and all $e \in  \Gamma(E_{-1})$,
   the unique function that satisfies:
    $$  \{e, e_1^{(i)} \wedge \dots \wedge e_{a_i}^{(i)} \}_2 = \sum_{k=1}^{a_i} e_1^{(i)} \wedge \dots \wedge  \{e, e_k^{(i)} \}_2 \wedge \dots \wedge e_{a_i}^{(i)} = {\rm{div}^{(i)} (e)} \, \, e_1^{(i)} \wedge \dots \wedge e_{a_i}^{(i)} $$
then

\begin{defn}
\label{def:modularoid}
	Let $(E,Q\equiv(\set{l_k}_{k\geq 1},\rho))$ be a Lie $\infty$-algebroid.
	%, and $\nu,(\mu_i)_{i \geq 1} $ as above.
	The {\textbf{modular cocycle}} with respect to  the local trivializations $X_1, \dots,X_n$ and $e_1^{(i)}, \dots,e_{a_i}^{(i)}$ as above is the section $\omega_{\nabla} $ of $E_{-1}^*$ given for all $e \in \Gamma(E_{-1})$ by:
	$$ \omega_\nabla (e) =  {\rm div}(\rho(e)) + \sum_{i \geq 1} (-1)^i {\rm{div}^{(i)} (e)}.  $$
\end{defn}
	
In the expression of the adjoint action of a Lie $\infty$-algebroid, as described in Proposition \ref{prop:adjoint:connection}, the Taylor coefficients (even the one that will appear when computing the action on the Berezinian bundle) have terms involving the $TM$-connection on $E$ that do not appear in the expression of $\omega_\nabla $. The next statement is therefore not obvious, and deserves a careful proof.
	
\begin{prop}
\label{prop:cocycle}
	Any modular cocycle  is a representative of the modular class.
\end{prop}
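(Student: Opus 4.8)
The plan is to show that the modular cocycle $\omega_\nabla$ of Definition \ref{def:modularoid} represents the same class in $H^1(E,Q)$ as the characteristic class of the adjoint representation $\ad^\nabla$, as computed via Theorem \ref{thm:CharacteristicClass}. The key point is that Theorem \ref{thm:CharacteristicClass} tells us that the characteristic class of \emph{any} representation $V$ is represented by $\str\omega \in \mathcal E_1$, where $\omega$ is the matrix of connection functions in any local trivialization of $V$, and that $\str\omega$ only sees the arity-$1$, degree-$1$ part, i.e. the supertrace of the Taylor coefficient $\nabla^{(1)}_e$ for $e \in \Gamma(E_{-1})$ acting on $V$. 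So I would first reduce the problem to proving the identity
\begin{equation*}
\str\bigl( \ad^{\nabla\,(1)}_e \bigr) = {\rm div}(\rho(e)) + \sum_{i \geq 1}(-1)^i {\rm div}^{(i)}(e), \qquad e \in \Gamma(E_{-1}),
\end{equation*}
where the supertrace on the left is taken over the linear part $\cdots \to E_{-2} \to E_{-1} \to TM$, graded so that $TM$ sits in degree $0$ and $E_{-i}$ in degree $-i$, and where the $\str$ appearing in the Berezinian formula \eqref{eq:Div=Supertrace} of Theorem \ref{thm:CharacteristicClass} matches the sign convention of the Berezinian bundle of the linear part. Note that the characteristic class is, by definition of the modular class, the class of $\str\omega^{\rm ad}$, so the above identity gives exactly $[\omega_\nabla] = $ modular class.

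Second, I would compute $\str(\ad^{\nabla\,(1)}_e)$ explicitly using Proposition \ref{prop:adjoint:connection}. Recall the coadjoint representation is $T^*M$ in degree $0$ and $(E_{-i})^*$ in degree $i$; dually the adjoint representation is on $TM$ in degree $0$ and $E_{-i}$ in degree $-i$. For $e \in \Gamma(E_{-1})$, the relevant Taylor coefficient $\ad^{\nabla\,(1)}_e$ is a degree-$0$ endomorphism of the linear part, and by Proposition \ref{prop:adjoint:connection} it acts as
\begin{align*}
\ad^{\nabla\,(1)}_e(e') &= \{e,e'\}_2 - (-1)^{|e|}\nabla_{\rho(e')}e = \{e,e'\}_2 \quad (\text{since } |e|=-1,\ \rho(e')=0 \text{ for } e' \in \Gamma(E_{-i}), i\geq 2),\\
\ad^{\nabla\,(1)}_e(e'') &= \{e,e''\}_2 \quad \text{for } e'' \in \Gamma(E_{-1}) \text{ as well, modulo the } \nabla_{\rho(e'')}e \text{ term which again needs care},\\
\ad^{\nabla\,(1)}_e(X) &= \brr{\nabla_X, l_1}(e) + \brr{\rho(e),X} + \rho(\nabla_X e), \quad X \in \mathfrak X(M).
\end{align*}
Taking the (super)trace degree by degree: on $TM$ (degree $0$, sign $+1$) the trace of $X \mapsto \brr{\rho(e),X} + (\text{terms through } \nabla)$ should contribute ${\rm div}(\rho(e))$ plus a term $\tr(\nabla e)$-type correction; on each $E_{-i}$ (degree $-i$, sign $(-1)^i$) the trace of $e' \mapsto \{e,e'\}_2$ contributes $(-1)^i {\rm div}^{(i)}(e)$ plus possibly a connection-dependent correction. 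The crucial observation is that all the connection-dependent correction terms — the $\brr{\nabla_X,l_1}$ piece, the $\rho(\nabla_X e)$ piece, and the $\nabla_{\rho(e')}e$ pieces across the various degrees — must cancel in the alternating sum. This is a telescoping/compatibility phenomenon: the appearance of $l_1$ as the differential of the linear part means $\str\brr{\nabla_X, l_1}$-type contributions cancel between consecutive degrees, and the anchor compatibility $\rho \circ l_1 = 0$ together with $\rho(\{y,z\}_2)=[\rho(y),\rho(z)]$ handles the $TM$-to-$E_{-1}$ interface.

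The main obstacle will be precisely this cancellation of the connection-dependent terms in the supertrace, carried out with correct Koszul signs — in particular verifying that the contribution of $\brr{\nabla_{\rho(e)}, l_1}$ summed over the tower, together with the $\rho(\nabla_{\rho(e)}e)$ and $\nabla_{\rho(\cdot)}e$ terms, vanishes, so that only the ``divergence'' pieces survive. A clean way to organize this: I would work in the local frames $X_1,\dots,X_n$ of $TM$ and $e^{(i)}_1,\dots,e^{(i)}_{a_i}$ of $E_{-i}$ introduced before Definition \ref{def:modularoid}, expand each Taylor coefficient, and collect terms by the frame index. An alternative, perhaps cleaner, approach is to invoke Theorem \ref{thm:CharacteristicClass} more directly: the Berezinian bundle of the adjoint representation is ${\rm Ber}(E \oplus TM)$, and its connection function $\str\omega^{\rm ad}(e)$ equals $\sum_{i}(-1)^i {\rm div}_{\mu_i}(\ad^{\nabla\,(1)}_e)$ by \eqref{eq:Div=Supertrace}; since ${\rm div}_{\mu_i}$ of an endomorphism is a trace and is insensitive to the off-diagonal (arity $\geq 1$ in the $l_1$-direction does not occur since we restrict to the degree-$0$ Taylor coefficient acting on a single $E_{-i}$), I would argue that ${\rm div}_{\mu_i}(\ad^{\nabla\,(1)}_e) = {\rm div}^{(i)}(e) + (\text{connection correction})$ for $i \geq 1$ and ${\rm div}_{\mu_0}(\ad^{\nabla\,(1)}_e) = {\rm div}(\rho(e)) + (\text{connection correction})$, then show the corrections telescope. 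Either way, once the cancellation is established, we conclude $\omega_\nabla = \str\omega^{\rm ad}$ as elements of $\mathcal E_1 = \Gamma(E_{-1}^*)$, hence $[\omega_\nabla]$ is the modular class, and since by Proposition \ref{prop:isorepres} this class is independent of $\nabla$, we are done. A final remark I would add: changing the local trivializations $X_i$, $e^{(i)}_j$ changes $\omega_\nabla$ by $Q$ of the logarithm of (the absolute value of) the corresponding Berezinian Jacobian, exactly as in \eqref{eq:omegaomega}, which re-confirms that the class is well-defined independently of the frames.
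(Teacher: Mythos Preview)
Your approach is essentially the same as the paper's: compute $\str\omega^{\rm ad}(e)$ via Theorem~\ref{thm:CharacteristicClass} and Proposition~\ref{prop:adjoint:connection} in a local frame, then check that the connection-dependent terms drop out, leaving exactly $\omega_\nabla(e)$.

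The only point worth flagging is that you are anticipating more work than is actually needed in the cancellation step. For $e\in\Gamma(E_{-1})$ the term $[\nabla_X,l_1](e)$ in $\ad^{\nabla\,(1)}_e(X)$ is simply zero (there is no $l_1$ out of $E_{-1}$ inside $E$), and the extra term $\nabla_{\rho(e')}e$ in $\ad^{\nabla\,(1)}_e(e')$ is nonzero only when $e'\in\Gamma(E_{-1})$, since $\rho$ vanishes on $E_{-i}$ for $i\ge 2$. So there is no multi-degree telescoping: the only two connection-dependent contributions are
\[
\sum_{i}\Bigl\langle dx_i,\ \rho\bigl(\nabla_{\partial_{x_i}} e\bigr)\Bigr\rangle
\quad\text{and}\quad
\sum_{k:\,|e_k|=-1}(-1)^{|e_k|}\bigl\langle \xi^k,\ \nabla_{\rho(e_k)} e\bigr\rangle,
\]
and the paper kills them against each other in one line using $\rho^*(dx_i)=\sum_k\rho_k^i\,\xi^k$. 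Once you see this, the identity $\str\omega^{\rm ad}(e)=\omega_\nabla(e)$ is immediate, and your final paragraph about the frame-change behaviour via \eqref{eq:omegaomega} is correct but not needed for the proof itself.
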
 
\begin{proof} Let $x_1, \dots, x_n$ be local coordinate in an coordinate neighborhood  $U \subset M$, and $e_{1}, \dots, e_r$ be local trivialization of $E$ over $U$ (by homogeneous sections). We denote the dual variables by $\xi^1, \dots,\xi^r $ and by $\mu$ the  section of the Berezinian bundle of  $E\oplus TM$ constructed out the local trivializations $\frac{\partial}{\partial x_1}, \dots, \frac{\partial}{\partial x_n}$ and $e_1\ldots, e_r$.
%For the adjoint representation, the connection functions are given by
%$$
%\omega_i^j(e)=\eval{\xi^i, \ad^{(1)}_{e_j}(e)} + \eval{dx_i, \ad^{(1)}_{\frac{\partial}{\partial x_j}}(e)}, \quad e\in\Gamma(E_{-1}). 
%$$
 
From Theorem \ref{thm:CharacteristicClass} we know  that $\nabla^{Ber}_e\mu=\str \omega(e)\,\mu$, $e\in\Gamma(E_{-1})$ where
\begin{align*}
\str \omega(e)=\sum_{k=1}^n  \eval{d x_k, \ad^{(1)}_{\frac{\partial}{\partial x_k}}(e)}
+\sum_{k=1}^r \eval{\xi^k, \ad^{(1)}_{e_k}(e)}.
\end{align*}

Taking into account the Taylor coefficients of the adjoint representation given in Proposition \ref{prop:adjoint:connection} we have 
\begin{align}
\label{eq:omeganabla}
\str \omega(e) =& \sum_{i=1}^n \eval{dx_i,\brr{\rho(e),\frac{\partial}{\partial x_i}}}
+\sum_{i=1}^n  \eval{d x_i, \rho(\nabla_{\frac{\partial}{\partial x_i}}(e))}\\
\nonumber
&+(-1)^{|e_k|} \sum_{k=1}^r \eval{\xi^k, \set{e_k,e}_2} 
+ (-1)^{|e_k|}\sum_{k=1}^r \eval {\xi^k, \nabla_{\rho(e_k)}(e)} .
\end{align}
Now, consider the functions  $\rho_k^i=\eval{\rho(e_k),dx_i}=\eval{\xi^k,\rho^*(dx_i)}$ and notice that
\begin{align*}
\sum_{k=1}^r \eval {\xi^k, \nabla_{\rho(e_k)}(e)}& =\sum_{k=1}^r\sum_{i=1}^n \eval {\xi^k, \nabla_{\rho_k^i\frac{\partial}{\partial x_i}}(e)}\\
&=\sum_{k=1}^r\sum_{i=1}^n \eval {\rho_k^i\,\xi^k, \nabla_{\frac{\partial}{\partial x_i}}(e)}=\sum_{i=1}^n\eval{(-1)^{|\xi_k|}\rho^* (dx_i),\nabla_{\frac{\partial}{\partial x_i}}(e) } 
\end{align*}
As a consequence, the second and fourth terms in \eqref{eq:omeganabla} add up to zero. Therefore
\begin{align}
\label{eq:twoomegas}
\str \omega(e) &=  \sum_{i=1}^n \eval{dx_i,\brr{\rho(e),\frac{\partial}{\partial x_i}}}+ \sum_{k=1}^r (-1)^{|\xi_k|}\eval{\xi^k, \set{e_k,e}_2}\\ \nonumber
&= {\rm div}(\rho(e)) + \sum_{i \geq 1} (-1)^i {\rm{div}^{(i)} (e)}\\ \nonumber
& = \omega_\nabla (e).
\end{align}
This completes the proof
%It suffices to check that 
% $$ \nabla^{Ber}_e \mu = \omega_\nabla (e) \, \mu $$
% where the Berezinian $\mu  $ is constructed out of the local trivializations $X_1, %\dots,X_n$ and $e_1^{(i)}, \dots,e_{a_i}^{(i)}$.
 
% Cancel terms: old version, this computation was there. 
\end{proof}

\begin{remark}
\normalfont
\label{rmk:rhogam}
 Let $x_1, \dots, x_n,e_{1}, \dots, e_r$ be as in the proof of Proposition \ref{prop:cocycle}.
 For every index $i$ such that $e_i $ has degree  $ -1$, we define functions $(\rho_a^i)_{a=1, \dots,n}  $ by:
 $$ \rho (e_i) = \sum_{a=1}^n \rho_a^i \frac{\partial}{ \partial x_a}   $$
  and we define functions $(\Gamma_{i,j}^k)_{1 \leq i,j,k\leq d}$ by
  $$ l_2(e_i,e_j ) = \sum_{k=1}^d \Gamma_{i,j}^k e_k .$$
  (Of course $\Gamma_{i,j}^k = 0$ unless $|e_k|=|e_i|+|e_j|+1 $).
  For every index $i$ such that $e_i$ has degree  $ -1$, Equation \eqref{eq:twoomegas} in the proof of Proposition \ref{prop:cocycle} reads:
  \begin{equation}
  \label{eq:gammarho}
  \omega_\nabla (e_i) = 
    \left(\sum_{a=1}^n \frac{\partial \rho_a^i}{\partial x_a} + \sum_{j=1}^d (-1)^{|e_j|}\Gamma_{i,j}^j \right).
    \end{equation}
\end{remark}

An immediate consequence of Proposition \ref{prop:cocycle} is the following result
(which extends Example \ref{ex:negGraded}).

\begin{cor}
\label{cor:negativelyGradedLieInftyDepends On}
The modular class of a negatively graded Lie $\infty$-algebroid $(E,Q\equiv(\set{l_k}_{k\geq 1},\rho)) $ depends only on $l_1$, $\rho$ and on the restriction of $l_2  $ to $\Gamma(E_{-1}) \otimes \Gamma(E) \to \Gamma(E).$
\end{cor}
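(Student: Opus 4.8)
The plan is to read off the corollary directly from the explicit formula for the modular cocycle established in Proposition~\ref{prop:cocycle}. Recall that that proposition shows that, for any choice of local trivializations $X_1,\dots,X_n$ of $TM$ and $e_1^{(i)},\dots,e_{a_i}^{(i)}$ of each $E_{-i}$, the section $\omega_\nabla \in \Gamma(E_{-1}^*)$ given by
$$ \omega_\nabla(e) = {\rm div}(\rho(e)) + \sum_{i \geq 1} (-1)^i {\rm{div}}^{(i)}(e), \qquad e \in \Gamma(E_{-1}), $$
is a representative of the modular class. So first I would simply observe which pieces of the Lie $\infty$-algebroid structure enter this expression: the term ${\rm div}(\rho(e))$ depends only on the anchor $\rho$, and for each $i \geq 1$ the term ${\rm{div}}^{(i)}(e)$ is defined by
$$ \{e, e_1^{(i)} \wedge \dots \wedge e_{a_i}^{(i)}\}_2 = {\rm{div}}^{(i)}(e)\, e_1^{(i)} \wedge \dots \wedge e_{a_i}^{(i)}, $$
with $e \in \Gamma(E_{-1})$ and $e_k^{(i)} \in \Gamma(E_{-i})$. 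Hence ${\rm{div}}^{(i)}(e)$ depends only on the restriction of the $2$-ary bracket to $\Gamma(E_{-1}) \otimes \Gamma(E_{-i}) \to \Gamma(E_{-i}) \subset \Gamma(E)$. Summing over $i$, the whole cocycle $\omega_\nabla$ is manifestly a function of $\rho$ and of $l_2|_{\Gamma(E_{-1}) \otimes \Gamma(E)}$ alone.

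The second step is to note that the modular class, being the cohomology class of $\omega_\nabla$ in $H^1(E,Q)$, a priori depends on the differential $Q$ (equivalently on all the brackets $l_k$ and $\rho$) through the cohomology in which the class lives — but what the corollary asserts is the weaker and correct statement that the class ``depends only on'' $l_1$, $\rho$ and the restricted $l_2$, in the sense that two Lie $\infty$-algebroid structures agreeing on this data have equal modular cocycles, hence equal modular classes (the class being taken in the common cohomology $H^1(E,Q)$, which itself depends only on $Q$, hence on all of $l_1,l_2,\dots,\rho$; but here we only claim the equality of cocycles, which already forces equality of classes once $Q$ is the same). To make this airtight I would phrase it as: given two Lie $\infty$-algebroid structures $(E,Q)$ and $(E,Q')$ on the same graded vector bundle with the same $l_1$, same $\rho$, and the same restriction of $l_2$ to $\Gamma(E_{-1}) \otimes \Gamma(E)$, then $\omega_\nabla = \omega_{\nabla'}$ for a common choice of trivializations, since every term in Definition~\ref{def:modularoid} only involves this shared data; consequently their modular classes coincide in $H^1(E,Q) = H^1(E,Q')$ (the differentials $Q,Q'$ need not be equal in general, but note that ${\rm div}$ and ${\rm div}^{(i)}$ only see $l_1$ — through degrees — $\rho$ and restricted $l_2$, so actually the cocycle is identical). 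In fact it suffices to observe that the $l_1$-dependence is only through the bookkeeping of which bundle $e_k^{(i)}$ lives in, which is why $l_1$ appears in the statement.

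The only genuine subtlety — and the step I would treat most carefully — is that Proposition~\ref{prop:adjoint:connection} shows the Taylor coefficients of the adjoint representation, and hence a priori the induced Berezinian connection, involve the auxiliary $TM$-connection $\nabla$ on $E$ and the higher brackets $l_k$, $k \geq 3$, through commutators like $[\nabla_X, l_k]$. One might therefore worry that the Berezinian/modular cocycle secretly depends on $l_3, l_4,\dots$ too. The resolution is exactly the content of the proof of Proposition~\ref{prop:cocycle}: the computation there shows that the connection-dependent terms (the second and fourth terms of \eqref{eq:omeganabla}) cancel, and the higher brackets $l_k$ with $k \geq 3$ contribute only to Taylor coefficients $\ad^{\nabla\,(k)}$ with $k \geq 2$, which never act on the rank-$1$ Berezinian line bundle and hence drop out. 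So I would simply cite Proposition~\ref{prop:cocycle} for the identity $[\omega_\nabla] = $ modular class and then invoke the explicit form of $\omega_\nabla$ in Definition~\ref{def:modularoid}; no new calculation is needed, and the ``main obstacle'' is really just making explicit that all higher brackets and the connection itself have already been eliminated in that earlier proof. I would write:

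\begin{proof}
By Proposition~\ref{prop:cocycle}, the modular class is represented by the modular cocycle $\omega_\nabla \in \Gamma(E_{-1}^*)$ of Definition~\ref{def:modularoid}, namely
$$ \omega_\nabla(e) = {\rm div}(\rho(e)) + \sum_{i \geq 1} (-1)^i {\rm{div}}^{(i)}(e), \qquad e \in \Gamma(E_{-1}),$$
computed with respect to arbitrary local trivializations $X_1,\dots,X_n$ of $TM$ and $e_1^{(i)},\dots,e_{a_i}^{(i)}$ of $E_{-i}$. Inspecting the right-hand side: the term ${\rm div}(\rho(e))$ is determined by the anchor $\rho$ alone; for each $i \geq 1$ the function ${\rm{div}}^{(i)}(e)$ is, by its defining relation
$$ \{e, e_1^{(i)} \wedge \dots \wedge e_{a_i}^{(i)}\}_2 = {\rm{div}}^{(i)}(e)\, e_1^{(i)} \wedge \dots \wedge e_{a_i}^{(i)},$$
determined by the values of $l_2$ on pairs in $\Gamma(E_{-1}) \times \Gamma(E_{-i})$, i.e. by the restriction of $l_2$ to $\Gamma(E_{-1}) \otimes \Gamma(E) \to \Gamma(E)$. (The bracket $l_1$ enters only through the grading, i.e. through which $E_{-i}$ each $e_k^{(i)}$ belongs to.) No higher bracket $l_k$, $k \geq 3$, and no auxiliary connection $\nabla$ on $E$ occur in this expression; indeed, the proof of Proposition~\ref{prop:cocycle} shows that all connection-dependent contributions to the Berezinian connection cancel, and the contributions of the brackets $l_k$ with $k \geq 3$ (and of $l_2$ evaluated away from $\Gamma(E_{-1})$) sit in Taylor coefficients $\ad^{\nabla\,(k)}$ of arity $k \geq 2$, which act trivially on the rank-$1$ Berezinian line bundle. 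Hence $\omega_\nabla$, and therefore its class in $H^1(E,Q)$, depends only on $l_1$, $\rho$ and the restriction of $l_2$ to $\Gamma(E_{-1}) \otimes \Gamma(E)$.
\end{proof}
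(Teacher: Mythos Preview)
Your proposal is correct and follows the same approach as the paper, which simply states that the corollary is immediate from Proposition~\ref{prop:cocycle}; you have just spelled out what ``immediate'' means.

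One small correction: your parenthetical explanation that ``$l_1$ enters only through the grading, i.e.\ through which $E_{-i}$ each $e_k^{(i)}$ belongs to'' is not right---the grading is part of the underlying graded vector bundle, not determined by $l_1$. The cocycle $\omega_\nabla$ itself does not involve $l_1$ at all. The reason $l_1$ appears in the statement is that the \emph{cohomology group} $H^1(E,Q)$ in which the class lives depends on $l_1$: the differential $Q\colon \mathcal{E}_1=\Gamma(E_{-1}^*)\to\mathcal{E}_2$ has a component $Q^{(0)}\colon\Gamma(E_{-1}^*)\to\Gamma(E_{-2}^*)$ dual to $l_1\colon E_{-2}\to E_{-1}$ (while $Q\colon\mathcal{E}_0\to\mathcal{E}_1$ involves only $\rho$ and $Q^{(1)}\colon\Gamma(E_{-1}^*)\to\Gamma(S^2(E_{-1}^*))$ involves only $\rho$ and $l_2|_{E_{-1}\otimes E_{-1}}$). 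So two structures agreeing on $l_1$, $\rho$, and the restricted $l_2$ have both the same cocycle and the same $H^1$, hence the same class. Your argument is unaffected; just replace or drop that parenthetical.
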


Let us conclude this section by explaining the relation with the existing studies of modular classes \cite{AB} and \cite{V07}.

\begin{remark}
\normalfont
In local coordinates, the modular cocycle is the divergence of $Q_E $, so that the modular class as defined as above matches the modular class as defined in \cite{AB,V07}. Let us check this point 
 that generalizes Proposition \ref{prop:divergence}.  Let $x_1, \dots, x_n$ be local coordinate in an coordinate neighborhood  $U \subset M$, and $e_{1}, \dots, e_r$ be local trivialization of $E$ over $U$ (for homogeneous sections). We denote the dual variables by $\xi^1, \dots,\xi^r $. In general, the divergence of a vector field $Q$ of degree $i$ is defined by:
 \begin{equation}
\label{def:div}
 {\mathrm{Div}}(Q) := (-1)^{i+1} \left(\sum_{k=1}^r \frac{\partial }{\partial \xi^k} Q[\xi^k] +  \sum_{k=1}^n \frac{\partial }{\partial x_k} Q[x_k]\right)  
 \end{equation}
% where $\eta_k $ stands either for $\xi_k $  or for $x_k$ (understood to be of degree $0$), and $F \in \Gamma(\wedge E^*)$ is a function on the graded manifold $(E,Q)$.
For vector fields of degree $ 1$,  for degree reasons, the divergence of the components of $Q $ of arity $\geq +2 $ or $0$ will disappear in the sum on the right-hand side of 
\eqref{def:div}: 
$$
{\mathrm{Div}}(Q) :=  \sum_{k=1  }^r \frac{\partial }{\partial \xi^k} Q^{(1)}[\xi^k] +  \sum_{k=1}^n \frac{\partial }{\partial x_k} Q^{(1)}[x_k]
$$
Define functions $\rho_a^j  $ and $\Gamma_{i,j}^k $ as in Remark \ref{rmk:rhogam}.
%by:
% $$ \rho (e_i) = \sum_{a=1}^n \rho_a^i \frac{\partial}{ \partial x_a}   $$
% and 
%  $$ l_2(e_i,e_j ) = \sum_{k=1}^d \Gamma_{i,j}^k e_k $$
Then
 $$ Q^{(1)} = \sum_{i,a} \rho_i^{a} \xi^i  \frac{\partial}{ \partial x_a}  + \sum_{i,j,k}  (-1)^{|\xi^k|}\Gamma_{i,j}^k \xi^j \xi^i  \frac{\partial}{ \partial \xi^k}.$$
 Hence 
 $$ {\mathrm{Div}}(Q) =  \sum_{i =1}^r  \left(\sum_{a=1}^n\frac{\partial \rho_a^i}{\partial x_a} + \sum_{j=1}^r  (-1)^{|\xi^j|}\Gamma_{i,j}^j \right) \xi^i $$
 The sum over $i$ indeed only runs on indices such that $|\xi^i|=1 $.
 This is the definition of the modular cocycle in \cite{AB}.  Equation \eqref{eq:gammarho} means that this definition matches our Definition \ref{def:modularoid}.
\end{remark}

\subsection{The modular class, leaf by leaf description, and homotopy invariance}

 For any negatively graded Lie $ \infty$-algebroid $(E,Q\equiv(\set{l_k}_{k\geq 1},\rho)) $, the image $\rho(\Gamma(E_{-1})) $  of the anchor map is a singular foliation in the sense of \cite{AS,AZ}. In particular, the base manifold $M$ is a disjoint union of leaves. More precisely, any  equivalence class $L $ of the equivalence relation on $M$ defined by:  \emph{``$m_1\sim m_2$ if and only there exists a finite family of vector fields in $\rho(\Gamma(E_{-1}))$ whose successive time-$1$ flows maps $m_1$ to $m_2$"} is a submanifold such that $T_m L = \rho_m (E_{-1}|_m) $, at every $m \in L$. We call these submanifolds {\textbf{leaves of $(E,Q\equiv(\set{l_k}_{k\geq 1},\rho)) $}}.

Let $L$ be such a leaf.
\begin{enumerate}
    \item The restriction of the linear part $(E\oplus TM,l_1,-\rho) $ to  $L$ is a complex of vector bundles
    \begin{equation}\label{eq:leafwisecomplex} \cdots \stackrel{l_1}{\longrightarrow} {\mathfrak i}_L E_{-2} \stackrel{l_1}{\longrightarrow}  {\mathfrak i}_L E_{-1}  \stackrel{-\rho}{\longrightarrow} {\mathfrak i}_L TM \end{equation}
    whose differential has constant rank. %i.e. $l_1 \colon E_{-k} \to E_{-k+1}  $ has constant rank along $L$ for every $k \geq 2$. 
    \item[] This can be proven as follows. The kernel and image of $ \rho$ have constant rank along $L$ since $\rho( {\mathfrak i}_L E_{-1})=TL $. Let $\sigma \colon TL \to {\mathfrak i}_L E_{-1} $ be a section of the anchor map $ \rho \colon {\mathfrak i}_L E_{-1} \to TL $. Consider the connections on the vector bundles $({\mathfrak i}_L E_{-i})_{i \geq 2} $ 
    %{\green{\comm{$i\geq 2?$}} }
    and the vector bundle ${\mathrm{Ker}}(\rho) \subset {\mathfrak i}_L E_{-1}$, 
    %and the conormal bundle ${\mathfrak i}_L TM/TL $ of $L$ 
    defined by:
     $$ \nabla^\sigma_u e := l_2 (\sigma(u), e ) \hbox{ for all $ u \in \mathfrak X(L), e \in \Gamma(E_{-k})$ or $ e \in \Gamma({\mathrm{Ker}}(\rho))$}.$$
     The graded Jacobi identity implies that for any $e \in \Gamma(E_{-1}), f \in \Gamma(E_{-k}) $ (for $k \geq 2$ ) or $ f \in \Gamma({\mathrm{Ker}}(\rho)) $: 
      $$  l_1 (l_2(e,f)) =  l_2 (e,l_1(f)) .$$
       When applied to $e= \sigma(u)$, this reads $ [\nabla^\sigma_u , l_1 ] =0 $. The differential $l_1 $ is therefore preserved under parallel transportation along $L$, and, in particular, is of constant rank along the leaf $L$. 
    \item The first item implies that the cohomology of the complex \eqref{eq:leafwisecomplex} is a graded bundle over $L$. 
%     $$ H_{-k} := \frac{{\mathrm{Im}}: l_1 ; E_{-k-1} \to E_{-k}}{{\mathrm{Ker}}: l_1 ; E_{-k} \to E_{-k+1}} \hbox{ and }   H_{-1} := \frac{{\mathrm{Im}}: l_1 ; E_{-2} \to E_{-1}}{{\mathrm{Ker}}(\rho) ; E_{-1} \to TL}  \hbox{ and }       H_{0} := \frac{TM}{TL} $$
     \item[] We denote by $H_{\bullet}(\mathfrak i_L E) $ this cohomology, and call it the \textbf{graded cohomology of $(E,Q\equiv(\set{l_k}_{k\geq 1},\rho)) $ over the leaf $L$}.
      \item[]Notice that $H_{0}(\mathfrak i_L E) $ is the normal bundle $ {\mathfrak i}_L TM / TL $ of $L$ in $M$.
     \item Also, $A_L(E,Q):= \mathfrak i_L E_{-1} / l_1(\mathfrak i_L E_{-2})$ is a (transitive) Lie algebroid over $L$, when equipped with the anchor and Lie bracket:
      $$ \rho \colon \overline{e} \mapsto \rho(e) \hbox{ and } \set{\overline{e_1}, \overline{e_2}} = \overline{l_2(e_1,e_2)}    \hbox{ for all $e_1,e_2,e \in \Gamma( \mathfrak i_LE_{-1})$}.$$
      The horizontal bar stands for the natural map $\Gamma(\mathfrak i_LE_{-1}) \mapsto \Gamma( A_L(E,Q)) $.
     \item[] We call $A_L(E,Q)$ the {\textbf{holonomy Lie algebroid of the leaf $L$ for $(E,Q\equiv(\set{l_k}_{k\geq 1},\rho)) $}}.  Its isotropy Lie algebra bundle is 
     $H_{-1}(\mathfrak i_L E) $ by construction. 
    \item  Each one of the spaces $H_{\bullet}(\mathfrak i_L E)$ comes equipped with a canonical $A_L(E,Q)$-connection defined by:
    $$ \nabla^{}_{\overline{e}} \overline{f} = \overline{l_2(e,f)}  $$
    where $e \in \Gamma(\mathfrak i_L E_{-1}) $ and $ f \in \Gamma(\mathfrak i_L E_{-k})$, for $k \geq 2 $, or $f\in\Gamma(\mathfrak i_L{\mathrm{Ker}}(\rho))  $ and the horizontal bars are as before.
    \item[] As an exception to the previous rule, the $A_L(E,Q)$-connection on
    $H_{0}(\mathfrak i_L E) =  \mathfrak i_L TM / TL$ is defined by:
     $$ \nabla^{}_{\overline{e}} \overline{X} = \overline{[\rho(\tilde e), X]} $$
     for every $X \in \mathfrak X (M)$, and every section $\tilde e$ of $E_{-1}$ whose restriction to $L$ is $e$. Also, $X \mapsto \overline{X}$ stands for the natural map $\mathfrak X (M) \mapsto \Gamma( \mathfrak i_L TM / TL ) $.
     \item The higher Jacobi identities imply that the above $ A_L(E,Q)$-connections are flat. 
      %depend on $\sigma $ and is flat.
      \item[] The flatness of  the $ A_L(E,Q)$-connection on $ H_{-i}(\mathfrak i_L E) $ for $i \geq 1$ follows from the graded Jacobi identity, for all $e_1,e_2 \in \Gamma(E_{-1})$, $f \in \Gamma(E_{-k})$ with $k \geq 2$ or $f\in{\mathrm{Ker}}(\rho) $:
      $$ l_2(l_2(e_1,e_2), f )= l_2(e_1 , l_2(e_2,f) ) - l_2(e_2 , l_2(e_1,f) ) - l_3(e_1,e_2, l_1(f)) - l_1 \smalcirc l_3(e_1,e_2, f)$$
     since $ f \mapsto l_3(e_1,e_2, l_1(f)) + l_1 \smalcirc l_3(e_1,e_2, f)$ induce the zero map in cohomology.
     The flatness of the  $ A_L(E,Q)$-connection on the normal bundle is well-known and is easy to check: it uses the fact that the anchor map $\rho $ is a morphism (see, for instance, \cite{AZ}).
\end{enumerate}
Let us conclude this discussion with a Lemma:

\begin{lem}
Let $L$ be a leaf of a Lie $\infty$-algebroid $(E,Q\equiv(\set{l_k}_{k\geq 1},\rho)) $. 
The graded cohomology $ H_\bullet(\mathfrak i_L E ) $  over the leaf $L$ is a module over the holonomy Lie algebroid $A_L(E,Q)$ of $L$.
\end{lem}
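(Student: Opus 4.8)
The plan is to observe that this Lemma merely packages together the facts collected in items~(4)--(6) of the preceding discussion: a module over a Lie algebroid is by definition a flat connection, so it suffices to recall that $A_L(E,Q)$ is a (transitive) Lie algebroid (item~(4)) and that $H_\bullet(\mathfrak i_L E)$ carries a flat $A_L(E,Q)$-connection (items~(5) and (6)). Concretely, I would take, for $\overline e \in \Gamma(A_L(E,Q))$ and $[f] \in H_{-k}(\mathfrak i_L E)$ with $k \geq 1$, the operator $\nabla_{\overline e}[f] := [\, l_2(e,f)\,]$, where $e \in \Gamma(\mathfrak i_L E_{-1})$ is any representative of $\overline e$ and $f$ is a cocycle representative of $[f]$ (a section of $\mathrm{Ker}(\rho)$ when $k=1$), together with the exceptional degree $0$ piece $H_0(\mathfrak i_L E)=\mathfrak i_L TM/TL$ treated via the Bott-type connection $\nabla_{\overline e}\overline X=\overline{[\rho(\tilde e),X]}$; and then check well-definedness, the connection axioms, and flatness.

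First I would justify that $l_2(e,f)|_L$ makes sense for $e\in\Gamma(\mathfrak i_LE_{-1})$ and $f$ a section of $\mathfrak i_LE_{-k}$ with $k\ge 2$, or of $\mathfrak i_L\mathrm{Ker}(\rho)$: since $l_2$ is $\mathcal O$-linear in every slot of degree $\le -2$ and $\rho(\Gamma(\mathfrak i_LE_{-1}))=TL$, so that $\rho(e)$ is tangent to $L$, the restriction to $L$ of $l_2(\tilde e,\tilde f)$ does not depend on the local extensions $\tilde e,\tilde f$ to $M$. Next, the identity $l_1\smalcirc l_2(e,-)=\pm\,l_2(e,-)\smalcirc l_1$ (together with $\rho(l_2(e,f))=[\rho(e),\rho(f)]$ for the degree $-1$ part) shows that $l_2(e,-)$ sends cocycles to cocycles and coboundaries to coboundaries, hence descends to $H_\bullet$; replacing $e$ by $e+l_1(g)$ changes $l_2(e,f)$ by $\pm\,l_1\,l_2(g,f)$ on a cocycle $f$, i.e. by a coboundary, so $\nabla_{\overline e}$ depends on $\overline e$ only. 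The connection axioms then read off the defining relation $l_2(e,hf)=h\,l_2(e,f)+\rho(e)[h]\,f$ and the $\mathcal O$-linearity of $l_2$ in a degree-$(-1)$ slot, with anchor $\rho_{A_L}(\overline e)=\rho(e)$.

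Flatness $\nabla_{[\overline{e_1},\overline{e_2}]}=[\nabla_{\overline{e_1}},\nabla_{\overline{e_2}}]$ then amounts, since $[\overline{e_1},\overline{e_2}]=\overline{l_2(e_1,e_2)}$, to the higher Jacobi identity displayed in item~(6),
\[
 l_2(l_2(e_1,e_2),f)=l_2(e_1,l_2(e_2,f))-l_2(e_2,l_2(e_1,f))-l_3(e_1,e_2,l_1 f)-l_1\smalcirc l_3(e_1,e_2,f),
\]
whose last two terms are killed in cohomology when $f$ is a cocycle ($l_1 f=0$ kills the first, the second is a coboundary). The degree $0$ summand $H_0(\mathfrak i_L E)=\mathfrak i_L TM/TL$ does not fit this uniform pattern and must be dealt with separately, using the Bott connection above, whose well-definedness and flatness follow from $\rho$ being a Lie $\infty$-algebroid morphism (so $\rho\smalcirc l_1=0$ and $\rho(l_2(e_1,e_2))=[\rho(e_1),\rho(e_2)]$). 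The only genuinely delicate point --- everything else being the sign bookkeeping already carried out in items~(1)--(6) --- is checking that the descent to the cohomology bundle is consistent in the asymmetric degree $-1$ case and glues correctly with the normal-bundle construction in degree $0$, so that the pieces assemble into a single $A_L(E,Q)$-module structure on $H_\bullet(\mathfrak i_L E)$.
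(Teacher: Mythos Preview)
Your proposal is correct and follows exactly the paper's approach: the paper gives no separate proof of this Lemma, treating it explicitly as a summary of the preceding items (4)--(6), and you have simply unpacked those items with the appropriate well-definedness and flatness checks. If anything, your write-up is more detailed than the paper's own discussion, which leaves the verification of well-definedness (independence of representatives $e$ and $f$) and the gluing with the degree-$0$ Bott connection to the reader.
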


Let us consider now the Berezinian bundle of the graded vector bundle $H_\bullet(\mathfrak i_L E) $: 
 $$ {\mathrm{Ber}} (H_\bullet(\mathfrak i_L E)):= \cdots \wedge^{\top} H_{-3}(\mathfrak i_L E) \otimes \wedge^{\top} (H_{-2}(\mathfrak i_L E))^* \otimes \wedge^{\top} H_{-1}(\mathfrak i_L E) \otimes \wedge^{\top} (\mathfrak i_L TM / TL)^* .  $$
By Lemma \ref{lem:berezinian}, there is a canonical isomorphism between the Berezinian  of a complex and the Berezinian of its cohomology.
In particular, there is a canonical isomorphism:
 $$ \mathfrak i_L : {\mathrm{Ber}} (E \oplus TM, l_1, -\rho )  \simeq  {\mathrm{Ber}} (H_\bullet(\mathfrak i_L E))
 .$$
 Recall that $(E\oplus TM,l_1,-\rho)$ stands for its linear part. Let us compute the \emph{modular class of the leaf $L$}, i.e. the characteristic class of this Lie algebroid representation. This Proposition shall be helpful.

\begin{prop}
\label{prop:bertober}
For every section $ e \in \Gamma(E_{-1})$ and every section $ \mu$ of the Berezinian bundle ${\mathrm{Ber}}(  E \oplus TM, l_1, - \rho)  $
$$ \nabla_{\overline{e}}^{Ber} \mathfrak i_L  \mu  =  \mathfrak i_L \ad_e^{Ber}\mu   .$$ %{\comm{\green{$\nabla^{Ber}=\ad^{\nabla, Ber}$?? Also we don't use notation $\nabla^{A_L}$} in last page }}
In the previous equation, on the left hand side, $\nabla^{Ber}_{\overline{e}} $ is computed with the help of the $A_L(E,Q) $-connections on  $H_\bullet(\mathfrak i_L E) $.
\end{prop}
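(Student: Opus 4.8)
The plan is to check the identity locally, reducing it to a supertrace computation in a frame of the linear part adapted to its cohomology. The first observation is that, for $e \in \Gamma(E_{-1})$, the degree zero Taylor coefficient $\ad^{\nabla\,(1)}_e$ of the adjoint representation is a chain endomorphism of the linear part complex $(E \oplus TM, l_1, -\rho)$: indeed $l_1(e) = 0$ because $E$ is concentrated in degrees $\le -1$, so the flatness condition (\ref{itm:second}) gives $\{D_V, \ad^{\nabla\,(1)}_e\} = \ad^{\nabla\,(1)}_{l_1(e)} = 0$, where $D_V$ is the differential $\delta := l_1 \oplus (-\rho)$ of the module. Since $\rho(e)$ is tangent to $L$, this operator restricts along $L$ to a first order, connection-type chain endomorphism of $\mathfrak i_L (E \oplus TM)$, whose differential $\delta$ has locally constant rank along $L$ by the discussion above. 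By Theorem \ref{thm:CharacteristicClass}, $\ad_e^{Ber}$ acts on $\mathrm{Ber}(E \oplus TM, l_1, -\rho)$ as multiplication by the supertrace $\sum_i (-1)^i \mathrm{div}_{\mu_i}(\ad^{\nabla\,(1)}_e)$, where $\mu = \cdots \otimes \mu_{-1} \otimes \mu_0^* \otimes \cdots$ is built from local frames $\mu_i$ of the degree $i$ component.

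Next I would identify the operator induced by $\ad^{\nabla\,(1)}_e$ on $H_\bullet(\mathfrak i_L E)$. Reading off the Taylor coefficients in Proposition \ref{prop:adjoint:connection}: for $f \in \Gamma(E_{-k})$ with $k \ge 2$, or $f \in \Gamma(\mathrm{Ker}(\rho)) \subset \Gamma(E_{-1})$, one gets $\ad^{\nabla\,(1)}_e(f) = l_2(e,f)$ since $\rho(f) = 0$; while on $X \in \mathfrak X(M)$ the term $[\nabla_X, l_1](e)$ vanishes ($l_1(e) = 0$ and $l_1(\nabla_X e) \in \Gamma(E_0) = 0$) and $\rho(\nabla_X e) \in TL$, so $\ad^{\nabla\,(1)}_e(X) \equiv [\rho(e), X] \bmod TL$. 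Hence $\ad^{\nabla\,(1)}_e$ descends on $H_\bullet(\mathfrak i_L E)$ precisely to the $A_L(E,Q)$-connection $\nabla_{\overline e}$ described above, and by Theorem \ref{thm:CharacteristicClass} again $\nabla_{\overline e}^{Ber}$ acts on $\mathrm{Ber}(H_\bullet(\mathfrak i_L E))$ as multiplication by $\sum_i (-1)^i \mathrm{div}_{\eta_i}(\nabla_{\overline e})$, with $\eta_i$ a frame of $H_i(\mathfrak i_L E)$.

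Since $\mathfrak i_L$ is $\mathcal O$-linear and both $\nabla_{\overline e}^{Ber}$ and $\ad_e^{Ber}$ obey the same Leibniz rule over $\rho(e)$, it suffices to verify the identity on one nowhere-vanishing local section $\mu$. I would take $\mu$ adapted to a splitting $\mathfrak i_L C_i = B_i \oplus \mathcal H_i \oplus K_i$ with $B_i = \mathrm{im}\,\delta$, $\mathcal H_i$ a complement of $B_i$ in $\mathrm{Ker}\,\delta|_i$ (so that $\mathcal H_i \cong H_i(\mathfrak i_L E)$), and with the chosen frame of $K_i$ mapped by $\delta$ onto that of $B_{i-1}$; then, by the description of the isomorphism of Lemma \ref{lem:berezinian}, $\mathfrak i_L \mu$ is exactly the section of $\mathrm{Ber}(H_\bullet(\mathfrak i_L E))$ built from the frames $\eta_i$ of the $\mathcal H_i$. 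Because $\ad^{\nabla\,(1)}_e$ commutes with $\delta$ it preserves $\mathrm{Ker}\,\delta = B \oplus \mathcal H$ and $\mathrm{im}\,\delta = B$, so in such a frame its connection matrix on $C_i$ is block triangular with diagonal blocks on $B_i$, $\mathcal H_i$ and $K_i$, whence $\mathrm{div}_{\mu_i}(\ad^{\nabla\,(1)}_e)$ is the sum of the three corresponding traces, the middle one being $\mathrm{div}_{\eta_i}(\nabla_{\overline e})$. Commutation with $\delta$, together with the chosen matching of the $K_i$- and $B_{i-1}$-frames, forces the trace on $K_i$ to equal the trace on $B_{i-1}$, so that in $\sum_i (-1)^i \mathrm{div}_{\mu_i}(\ad^{\nabla\,(1)}_e)$ the $B$- and $K$-contributions cancel and only $\sum_i (-1)^i \mathrm{div}_{\eta_i}(\nabla_{\overline e})$ survives. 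This gives $\ad_e^{Ber}\mu = \big(\sum_i (-1)^i \mathrm{div}_{\eta_i}(\nabla_{\overline e})\big)\mu$, hence $\mathfrak i_L \ad_e^{Ber}\mu = \nabla_{\overline e}^{Ber}\mathfrak i_L\mu$.

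The delicate point I expect to be the main obstacle is the cancellation in the last step: one must ensure that, once the local frame is chosen so that $\delta$ sends frame vectors to frame vectors, conjugating the first order operator $\ad^{\nabla\,(1)}_e$ by $\delta$ produces no extra divergence term, so that the traces on $K_i$ and on $B_{i-1}$ are genuinely equal; this is precisely what the careful choice of frame is for. A secondary point to check explicitly is that the operator induced by $\ad^{\nabla\,(1)}_e$ on $H_0(\mathfrak i_L E) = \mathfrak i_L TM / TL$ is, on the nose, the normal-bundle connection $\overline X \mapsto \overline{[\rho(\tilde e), X]}$, which is the exceptional case among the connections $\nabla_{\overline e}$ listed above.
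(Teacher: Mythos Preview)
Your proof is correct and follows essentially the same approach as the paper: both arguments choose a Hodge-type splitting $B_i \oplus \mathcal H_i \oplus K_i$ of the restricted linear part (the paper's $l_1(f^{(i+1)}_j),\,e^{(i)}_j,\,f^{(i)}_j$), identify $\mathfrak i_L\mu$ with the Berezinian section built from the $\mathcal H$-frames, and then show that the contributions of the acyclic pieces cancel in adjacent degrees. Your phrasing of the cancellation via the chain-map property of $\ad^{\nabla\,(1)}_e$ and block-triangularity is slightly more abstract than the paper's explicit matching of the $l_2(e,f^{(i)}_j)$ and $l_2(e,l_1(f^{(i)}_j))$ terms, but the content is the same; the ``delicate point'' you flag is harmless precisely because $\delta$ is $\mathcal O$-linear, so conjugating the first-order operator by it introduces no extra divergence.
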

\begin{proof}
Let $ \overline{X_1} ,\dots, \overline{X_k} $ be a local trivialization of the normal bundle $\mathfrak i_L TM  $, arising from local vector fields $X_1, \dots,X_k$. For every $ i \geq 1$, let  $ e_{1}^{(i)}, \dots , e_{h_i}^{(i)}$ be sections of ${\mathrm{Ker}}(l_1) \subset \mathfrak i_L  E_{-i} $ (or ${\mathrm Ker}(\rho)$ for $ i=1$) whose classes $ \overline{e_{1}^{(i)}}, \dots , \overline{e_{h_i}^{(i)}}$ modulo $l_1(E_{-i-1}) $ form a local trivialization of the bundle $ H_{-i}(\mathfrak i_L E) $. 
There exists sections $ f_1^{(i)} \dots, f_{b_i}^{(i)} $ of $ \mathfrak i_L E_{-i}$ such that:
\begin{enumerate}
	\item local extensions of $ X_1 ,\dots, X_k ,  \rho(f_1^{(1)}) \dots, \rho( f_{b_1}^{(1)} )  $
	form a trivialization of $ TM $ (in a neighborhood of $L$). 
	\item $ l_1 (f_1^{(i+1)}) ,  \dots , l_1 (f_{b_{i+1}}^{(i+1)}) , e_{1}^{(i)}, \dots , e_{h_i}^{(i)} , f_1^{(i)} \dots, f_{b_i}^{(i)} $ form a trivialization of $\mathfrak i_L E_{-i} $.
	\end{enumerate}
	The isomorphism $\mathfrak i_L $, between Berezinian bundles, maps  the section $ \mu$ associated with the trivialization of $ E \oplus TM $  given above to the section associated to 
	the trivialization $   \overline{X_1} ,\dots, \overline{X_k} $ and $ \overline{e}_{1}^{(i)}, \dots , \overline{e}_{h_i}^{(i)} $. 
	Now, both actions of $E_{-1} $ on the Berezinian bundles are the alternate sums of the terms computed as in \eqref{eq:omegaii}, for all the elements of the trivializations above.
	In order to check that the Berezinian actions computed with respect to these two local trivializations coincide, it suffices to check that, for all possible indices, the term  in 
	$\ad_e^{Ber}\mu$ %\comm{\green{$\ad^{\nabla, Ber}$}} 
	where the bracket  $ l_2(e,f_j^{(i)}) $ appears adds up to zero with the term due to the bracket $ l_2(e,l_1(f_j^{(i)})) $
	(or $[\rho(e), \rho(f_j^{(1)})] $ for $i=1$). Also the term where the bracket of the form $l_2(e,e_j^{(i)})$ appears  is equal to the term where $\nabla_{\overline{e}} \overline{e}_j^{(i)}$ appears, and the term where  $ [\rho(e), X_i ]  $ appears is the term where $\nabla_{\overline{e}} \overline{X_i}$ appears.
	%which itself can be deduced from the relations:
	%$$  \{ e ,  l_1 (f_1^{(i+1)}) \wedge   \dots \wedge  l_1 (f_{b_{i+1}}^{(i+1)}) \wedge  e_{1}^{(i)}\wedge  \dots \wedge  e_{h_i}^{(i)} \wedge  f_1^{(i)} \dots \wedge  f_{b_i}^{(i)}\}_2 =  (a_i + b_i  + a_{i+1}  )  l_1 (f_1^{(i+1)}) \wedge   \dots \wedge  l_1 (f_{b_{i+1}}^{(i+1)}) \wedge  e_{1}^{(i)} \wedge  \dots \wedge  e_{h_i}^{(i)} \wedge f_1^{(i)} \dots \wedge  f_{b_i}^{(i)} $$
%	where 
 %    $b_i$ is defined by	$$ \nabla_{\overline{e}} \overline{ e_{1}^{(i)}} \wedge  \dots \wedge  \overline{e_{h_i}^{(i)} } =  b_i \overline{e_{1}^{(i)}} \wedge  \dots \wedge  \overline{e_{h_i}^{(i)}}  $$	and $a_i$ is defined by	 $$ \{ e,  f_1^{(i)} \dots \wedge  f_{b_i}^{(i)} \}_2 = a_i f_1^{(i)} \dots \wedge  f_{b_i}^{(i)} + \cdots $$ 	 where $\cdots $ is an element in the algebra (for the wedge product) defined by $ e_1 $
 This completes the proof.
\end{proof}

A Lie $\infty $-algebroid morphism $ \Phi $ from a Lie $\infty $-algebroid
 $(E,Q\equiv(\set{l_k}_{k\geq 1},\rho)) $
 to
 $(E',Q'\equiv(\set{m_k}_{k\geq 1},\rho)) $
 induces in particular:
 %{\comm{\green -signs in $\rho$'s. $\phi_1$ as in remark \ref{rem:arity:zero:infty:morphism}}}
 \begin{enumerate}
     \item A chain map $\phi_1: E_\bullet \to E'_\bullet $
    \begin{equation} \label{eq:EtoE'}
    \xymatrix{ \cdots \ar[d]_{\phi_1} \ar[r]^{l_1}  &  E_{-2}\ar[d]_{\phi_1}  \ar[r]^{l_1} & \ar[d]_{\phi_1} E_{-1}   \ar[r]^{-\rho} &  TM \ar[d]^{=} \\ 
    \cdots  \ar[r]_{m_1}  & \ar[r]_{m_1} E_{-2}'& E_{-1}'   \ar[r]_{-\rho'} &  TM \ar[u] }
    \end{equation}
     \item and a degree zero vector bundle morphism $\phi_2: S^2(E) \to E $  such that 
      for every sections $e_1,e_2 \in \Gamma(E)$:
      %\comm{\green{$S^2(E)$ instead of $\wedge^2E$}}
       \begin{eqnarray} \label{eq:EtoE'2} \lefteqn{\phi_1(\{e_1,e_2\})}\nonumber\\ &=&\{\phi_1(e_1),\phi_1(e_2)\}+ l_1 \phi_2(e_1,e_2)  - \phi_2(l_1(e_1),e_2) - (-1)^{|e_1|}\phi_2(e_1,l_1 (e_2)) . 
       \end{eqnarray}
 \end{enumerate}
Let us assume that both $(E,Q\equiv(\set{l_k}_{k\geq 1},\rho)) $
 and
 $(E',Q'\equiv(\set{m_k}_{k\geq 1},\rho')) $
 have the same leaves, and let $L$ be such a leaf.
 The Lie $ \infty$-morphism above then induces, by Equations (\ref{eq:EtoE'} - \ref{eq:EtoE'2})   \begin{enumerate}
        \item a Lie algebroid morphism $A_L(E,Q)  \longrightarrow A_L (E',Q')$,
        \item a graded vector bundle morphism:
         $$H_\bullet (\mathfrak i_L E) \longrightarrow H_\bullet (\mathfrak i_L E') $$ 
         
        % {\green **** $\simeq$ should be $\longrightarrow$?}
         \item These morphisms intertwines the respective actions of  
         $A_L(E,Q)$ and $A_L (E',Q')$ on $ H_\bullet (\mathfrak i_L E)$ and $ H_\bullet (\mathfrak i_L E')$.
    \end{enumerate}
In particular, homotopy equivalent  Lie $\infty $-algebroids $(E,Q\equiv(\set{l_k}_{k\geq 1},\rho)) $
 and
 $(E',Q'\equiv(\set{m_k}_{k\geq 1},\rho')) $ have the same leaves, and a homotopy equivalence induces 
    \begin{enumerate}
        \item a Lie algebroid isomorphism $A_L(E,Q)  \simeq A_L (E',Q')$,
        \item a graded vector bundle isomorphism:
         $$H_\bullet (\mathfrak i_L E) \simeq H_\bullet (\mathfrak i_L E') $$
         \item which intertwines the respective actions of  
         $A_L(E,Q)$ and $A_L (E',Q')$ on $ H_\bullet (\mathfrak i_L E)$ and $ H_\bullet (\mathfrak i_L E')$.
    \end{enumerate}
    % \begin{equation} \label{eq:EE'}
    %\xymatrix{ \cdots \ar[d]_{\phi} \ar[r]^{l_1}  &  E_{-2}\ar[d]_{\phi}  \ar[r]^{l_1} & \ar[d] E_{-1}   \ar[r]^{\rho} &  TM \ar[d]^{=} \\ 
    %\cdots  \ar[r]_{m_1} \ar[u] & \ar[u] \ar[r]_{m_1} E_{-2}'& E_{-1}'  \ar[u] \ar[r]_{\rho'} &  TM  \ar[u]}
    %\end{equation}
 %\item 
 By Lemma \ref{lem:berezinian}, a homotopy equivalence between two complexes induces a canonical isomorphism of their Berezinian bundles, and the following diagram commutes.
         $$ \xymatrix{ {\mathrm{Ber}}(E \oplus TM, l_1, -\rho ) \ar[dr]^{\mathfrak i_L} \ar[dd]^{\simeq} &  \\ &  \mathrm{Ber}(H_\bullet(i_L E))\simeq \mathrm{Ber}(H_\bullet(i_L E'))\\ \ar[uu] {\mathrm{Ber}}(  E' \oplus TM, l_1', -\rho'  ) \ar[ur]_{\mathfrak i_L}& } 
         $$ 
The following corollary of Proposition \ref{prop:bertober} follows from the previous discussion:

\begin{cor}
A homotopy equivalence between two Lie $\infty$-algebroids intertwines the modular classes of their leaves. 
In particular, unimodularity of a given leaf is preserved under homotopy equivalence.
\end{cor}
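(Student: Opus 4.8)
The plan is to assemble the corollary from pieces that are already in place just above the statement, so the proof is essentially a matter of threading the canonical isomorphisms and connections together. First I would recall that the modular class of a leaf $L$ is, by definition, the characteristic class of the $A_L(E,Q)$-representation on $\mathrm{Ber}(H_\bullet(\mathfrak i_L E))$, equivalently (via Proposition \ref{prop:bertober} and the canonical isomorphism $\mathfrak i_L$ of Lemma \ref{lem:berezinian}) the characteristic class of the representation of the restricted linear part on $\mathrm{Ber}(E\oplus TM, l_1, -\rho)$. So the claim reduces to showing that, under a homotopy equivalence $\Phi\colon (E,Q)\to(E',Q')$ with $\Psi$ its homotopy inverse, the induced identification of Berezinian bundles carries the $A_L(E,Q)$-action to the $A_L(E',Q')$-action, i.e.\ that the characteristic classes match in $H^1(A_L(E,Q))\simeq H^1(A_L(E',Q'))$.

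The key steps, in order, would be: (1) invoke the already-established consequence of Equations \eqref{eq:EtoE'}--\eqref{eq:EtoE'2} that a homotopy equivalence induces a Lie algebroid isomorphism $A_L(E,Q)\simeq A_L(E',Q')$ together with a graded vector bundle isomorphism $H_\bullet(\mathfrak i_L E)\simeq H_\bullet(\mathfrak i_L E')$ intertwining the respective flat connections; (2) apply Lemma \ref{lem:berezinian} to the complexes $(E\oplus TM, l_1,-\rho)$ and $(E'\oplus TM, l_1',-\rho')$ to get a canonical isomorphism of their Berezinian line bundles, and note — using the commuting triangle displayed just before the corollary — that this isomorphism is compatible with the two isomorphisms $\mathfrak i_L$ onto $\mathrm{Ber}(H_\bullet(\mathfrak i_L E))\simeq \mathrm{Ber}(H_\bullet(\mathfrak i_L E'))$; (3) combine (1) and (2): since the connections on the $H_\bullet$'s are intertwined and the Berezinian isomorphisms are canonical, the induced $A_L$-connections on the two Berezinian bundles correspond under the Lie algebroid isomorphism $A_L(E,Q)\simeq A_L(E',Q')$; (4) conclude that the characteristic classes coincide, hence the modular classes of $L$ for $(E,Q)$ and $(E',Q')$ are identified, and in particular one vanishes iff the other does.

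The main obstacle, such as it is, is bookkeeping rather than conceptual: one must be careful that the isomorphism of Berezinian bundles produced by Lemma \ref{lem:berezinian} applied at the level of the ambient complexes really is the same (up to the canonical $\mathfrak i_L$'s) as the one obtained by passing to cohomology and using the $H_\bullet$-level isomorphism — this is exactly the content of the commutative diagram stated right before the corollary, so I would simply cite it. A second minor point is that characteristic classes of rank-$1$ representations are, by the discussion opening Section \ref{LieInftyModular}, insensitive to the choice of trivializing section (changing $b$ to $\lambda b$ shifts $\omega_b$ by a coboundary $Q_E[\ln|\lambda|]$), so the identification of classes does not depend on any auxiliary choices; I would note this to make the "canonical" in the statement precise. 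With these observations the proof is a two-line deduction, which I would write as: "By Proposition \ref{prop:bertober}, the modular class of $L$ is the characteristic class of the $A_L$-action on $\mathrm{Ber}(H_\bullet(\mathfrak i_L E))$. A homotopy equivalence induces, as explained above, a Lie algebroid isomorphism $A_L(E,Q)\simeq A_L(E',Q')$ together with an isomorphism $\mathrm{Ber}(H_\bullet(\mathfrak i_L E))\simeq \mathrm{Ber}(H_\bullet(\mathfrak i_L E'))$ intertwining the two $A_L$-actions, by the commuting diagram above. Hence the two characteristic classes correspond under $H^1(A_L(E,Q))\simeq H^1(A_L(E',Q'))$, and unimodularity of $L$ is preserved."
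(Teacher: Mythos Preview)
Your proposal is correct and follows essentially the same route as the paper: the corollary is stated there as an immediate consequence of Proposition \ref{prop:bertober} together with the ``previous discussion'' (the induced Lie algebroid isomorphism $A_L(E,Q)\simeq A_L(E',Q')$, the intertwining of the $A_L$-actions on $H_\bullet(\mathfrak i_L E)$ and $H_\bullet(\mathfrak i_L E')$, and the commuting triangle of Berezinian bundles coming from Lemma \ref{lem:berezinian}). You have simply made explicit the two-line deduction that the paper leaves to the reader.
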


\end{document}